\makeatletter\@addtoreset{equation}{section}\makeatother
\makeatletter\@addtoreset{figure}{section}\makeatother
\makeatletter\@addtoreset{table}{section}\makeatother
\newtheorem{theorem}{Theorem}[section]
\newtheorem{hyp}[theorem]{Hypothesis}
\newtheorem{prop}[theorem]{Proposition}
\newtheorem{remark}[theorem]{Remark}
\newtheorem{lemma}[theorem]{Lemma}
\newtheorem{cor}[theorem]{Corollary}
\newtheorem{theoremL}{Theorem}
\newcommand{\bE}{{\mathbb E}}
\newcommand{\bR}{{\mathbb R}}
\newcommand{\bZ}{{\mathbb Z}}
\newcommand{\bQ}{{\mathbb Q}}
\newcommand{\bT}{{\mathbb T}}
\newcommand{\bD}{{\mathbb D}}
\newcommand{\bP}{{\mathbb P}}
\renewcommand{\i}{\infty}
\renewcommand{\a}{\alpha}
\renewcommand{\d}{\delta}
\newcommand{\D}{\Delta}
\newcommand{\e}{\varepsilon}
\newcommand{\g}{\gamma}
\newcommand{\G}{\Gamma}
\renewcommand{\l}{\lambda}
\renewcommand{\L}{\Lambda}
\newcommand{\s}{\sigma}
\renewcommand{\O}{\Omega}
\renewcommand{\th}{\theta}
\renewcommand{\o}{\omega}
\newcommand{\z}{\zeta}
\newcommand{\cS}{{\mathcal S}}
\newcommand{\cX}{{\mathcal X}}
\newcommand{\cI}{{\mathcal I}}
\newcommand{\cL}{{\mathcal L}}
\newcommand{\cF}{{\mathcal F}}
\newcommand{\cT}{{\mathcal T}}
\newcommand{\cG}{{\mathcal G}}
\renewcommand{\geq}{\geqslant}
\renewcommand{\leq}{\leqslant}
\newcommand{\op}[1]{\!\!\mathop{\rm ~#1}\nolimits}
\newenvironment{example}{\refstepcounter{theorem}\par\medskip\noindent{\bf
Example~\thetheorem~~}}{\unskip\nobreak\hfill\hbox{}}
\newenvironment{definition}{\refstepcounter{theorem}\par\medskip\noindent{\bf
Definition~\thetheorem.}}{\unskip\nobreak\hfill\hbox{\medskip \smallskip}}
\begin{document}

\title[Poincar\'e\--Birkhoff theorems in random dynamics]{Poincar\'e\--Birkhoff  Theorems
in random dynamics}

\author{\'Alvaro Pelayo \,\,\,\,\,\,\,\,\,\,\,\,\,\, \,\,\,\,\,Fraydoun Rezakhanlou}

\maketitle

\vspace{-10mm}
\begin{center}
{\small {{\bf To Alan Weinstein
 on his 70th birthday, with admiration.}}}
\end{center}

\begin{abstract}
We propose a generalization of the Poincar\'e\--Birkhoff  Theorem on area\--preserving  
twist maps  to  area\--preserving twist maps that are random with respect to an ergodic probability 
 measure. The classical theory is a particular instance of the random theory we
 propose.
  \end{abstract}

\section{\textcolor{black}{Introduction and main results}} \label{mainsec} 

This paper proposes \emph{an} extension of the classical theory of area\--preserving twist maps to the random
setting. While of course there is not a unique way to do this, 
our definitions and constructions are natural both from the point of view of probability and the point of view of geometry.  This is evidenced by the fact that the classical theory of area preserving 
twist maps is a particular case of the random theory, as explained in Appendix B. Nonetheless we look
forward to seeing complementary approaches in the literature where other notions of random
twists may be considered.

In his work in celestial mechanics~\cite{Po93}
Poincar\'e showed the study of the dynamics of certain cases of the restricted $3$\--Body Problem 
may be reduced to investigating area\--preserving maps (see Le Calvez~\cite{Le91} and
Mather~\cite{Ma1986} for an introduction to area\--preserving maps). 
He concluded that there is no reasonable way to solve the problem {explicitly} in
the sense of finding formulae for the trajectories.  New insights appear regularly 
(eg.~ Albers et al.~\cite{Albers}, Bruno~\cite{Bruno},  
Galante et al.~\cite{GaKa2011}, and Weinstein~\cite{Weinstein1983a}).   
Instead of aiming at finding the trajectories, in dynamical systems one aims at 
describing their analytical and topological behavior.  
Of a particular interest are the constant ones, i.e., the fixed points.   

The development of  the modern field of dynamical systems was markedly influenced  
by Poincar\'e's work in mechanics, which led him to state (1912)  the Poincar\'e\--Birkhoff 
Theorem \cite{Po12, Bi1}. It was proved in full by Birkhoff in 1925. 
The result says that {an area\--preserving periodic 
twist  map  $F \colon \mathcal{S} \to \mathcal{S}$ of $\mathcal{S}:=\mathbb{R}\times [-1,\,1]$ 
has two geometrically distinct fixed points}; see Appendix A (Section~\ref{sec:classical}) for further
explanations). For the purpose of our article, its most useful 
proof  follows Chaperon's viewpoint \cite{Ch1984,Ch1984b, Ch1989} and the 
so called theory of ``generating functions". Generalizations including a number of new ideas
have been obtained by several authors, 
eg. see Carter \cite{C},  Ding \cite{D},  Franks \cite{F1,Fr88b, F3}, Le Calvez\--Wang \cite{Leca2010}, 
Neumann \cite{Ne77}, and Jacobowitz \cite{J1, J2}. 
Arnol'd realized that its generalization to higher dimensions
concerned symplectic maps and formulated
the Arnol'd Conjecture \cite{Ar1978} (see also Hofer et. al \cite{HoZe1994} and 
Zehnder \cite{Ze1986}).  

 The theme of our article is randomness.  We explore a  parallel generalization of the Poincar\'e\--Birkhoff Theorem 
 to twist maps that are random with respect to a given
 probability measure.    As 
 we will see, the stochastic theory and results we prove in this paper \emph{include
 as particular instances the classical theory of twist maps, as well as the Poincar\'e\--Birkhoff Theorem;
 this is explained in Appendix B} (Section~\ref{appendixB}).  While random dynamics has been
explored quite throughly, eg.~Brownian motions~\cite{Eins1956, Ne1967}, the implications of the area\--preservation assumption remain relatively unknown.

\subsection{Set up}
The natural setting to study area\--preserving dynamics is 
a \emph{probability space}, that is, a quadruple:
\begin{eqnarray}
\hat{\Omega}:=(\Omega, \,\cF,\, \mathbb{P},\,\tau). \label{omega}
\end{eqnarray}
 Here $\Omega$ is 
a separable metric space, 
$\cF$ is the Borel
sigma-algebra on $\Omega$,  $\tau  \colon \mathbb{R} \times \Omega \to \Omega$ is
a continuous $\mathbb{R}$\--action, and $\mathbb{P}$ is a $\tau$\--invariant ergodic 
probability measure on $(\Omega,\cF)$. Denote $\tau_a:=\tau(a,\cdot) \colon \Omega \to \Omega$. 
In addition, we assume: 
\begin{enumerate}[(i)] 
\item \label{ps}
\emph{$\mathbb{P}$\--positivity}: if $U \in \mathcal{F}$ is a nonempty open set, then $\bP(U)>0$. 
\item \label{p}
\emph{$\mathbb{P}$\--preservation by $\tau$}: $\mathbb{P}(\tau_aA) =\mathbb{P}( A)$ for every $a \in \bR$, and every $A \in \mathcal{F}$.
\item \label{pp}
\emph{Ergodicity}:
for every $A \in \mathcal{F}$, if $\tau_aA = A$ for all $a \in \bR$, then $\mathbb{P}(A) = 1$ or $\mathbb{P}(A)=0$.
\end{enumerate}
If (\ref{ps}), (\ref{p}), and (\ref{pp}) hold we say that $\mathbb{P}$ is a $\tau$\--\emph{invariant ergodic}
probability measure. For instance,  take a smooth manifold $\Omega$ which admits a smooth 
global flow $\phi:\mathbb{R}\times\Omega\rightarrow\Omega$ with an ergodic invariant
probability measure $\mathbb{P}$ that is positive on nonempty open subsets of $\Omega$ (it is non-trivial to 
find $\phi$ with these properties), $\mathcal{F}$  the 
Borel sigma-algebra of $\Omega$, and $\tau_a:=\phi(a,\cdot)$.  

\subsection{Definitions}

In what follows, let $\hat \Omega$ be a probability space as in (\ref{omega}). 
Let   $\bar F \colon  \Omega \times [-1,\,1]\to \mathcal{S}$ be a measurable map with respect to the
product measure of $\mathbb{P}$ and the Lebesgue measure on $[-1,\,1]$.   
Write $\bar F(\o,p)=(\bar Q(\o,p),\bar P(\o,p))$ and suppose that  
$F \colon \mathcal{S} \times \Omega \to \mathcal{S}$ is of the form
\begin{eqnarray} \label{then1}
\textup{\,\,}\hspace{6mm}\textup{\,}F(q,p;\o)=(Q(q,\,p;\, \omega),P(q,\,p;\, \omega))\,{\tiny \textup{with}}
    {\small \left\{
      \begin{aligned}
       Q(q,\,p;\, \omega)&\,=\, q+\bar Q(\tau_q\omega ,\,p )\\
        P(q,\,p;\, \omega)&\,=\,\bar P(\tau_q\omega,\, p).\\
         \end{aligned}\right.}
 \end{eqnarray}
 Write  $\bE$ for the expected value with respect to the probability measure $\bP$.

\begin{definition} \label{def1}
We say that $F$ in (\ref{then1})  is 
an \emph{area\--preserving random  twist} if the following hold for $\bP$-almost all $\o$:
\begin{itemize}
\item[(1)] {\emph{area\--preservation}}: $F(\cdot\,,\, \cdot\,;\, \omega)  \colon \mathcal{S} \to \mathcal{S}$
is an area\--preserving diffeomorphism;
\item[(2)] {\emph{boundary invariance}}:  $P(q,\pm 1;\o)=\pm 1$;

\item[(3)] {\emph{boundary twisting}}:  $q\mapsto Q(q,\pm 1;\o)$ is increasing, and $\pm \bar Q(\o, \pm 1)>0$;
\item[(4)] {\emph{finite second moment}}: $\sup_p\bE\big[\bar Q^2(\o,p)+\bar P^2(\o,p)\big]<\i$.
\end{itemize}
\end{definition}

\begin{definition}
An area\--preserving random twist 
$F$  is  \emph{positive monotone} if  $f\colon [-1,\,1]\to \mathbb{R} $ given by $f(p):=\bar Q(\o,\, p)$ is 
 increasing with probability one.   A measurable  map
 $\bar G=(\bar Q,\bar P) \colon\Omega  \times [-1,\,1]  \to \mathcal{S}$
is a \emph{negative monotone} area\--preserving random twist if 
$G(q,p;\o):=(q+\bar Q(\tau_q\o,p),\bar P(\tau_q\o,p))$ is the 
inverse of a positive area\--preserving random twist.\footnote{This means that 
$g(p):=\bar Q(\omega,\, p)$ is decreasing with probability $1$ and 
that  $G$ satisfies (1), (2) and (4)  but instead of (3) we have
that $Q(q,\pm 1;\o)$ is increasing but  $\pm\bar Q(\o,\pm 1)<0$.}  We say that $F$ is \emph{monotone} if 
$F$ is either positive  or negative monotone. 
\end{definition}

\begin{definition}
 $F$  is  \emph{regular} if the derivatives of $F$ and $F^{-1}$
are uniformly bounded by a constant independent of $\o$ with probability one.  
\end{definition}

Our theorems apply to twists connected to the identity. 

\begin{definition} \label{idf}
A regular area\--preserving random twist $F \colon  \mathcal{S} \times \Omega \to \mathcal{S}$
 is \emph{isotopic to the identity} if
 there is a path $(F^t \,\, | \,\,t\in[0,1])$
of diffeomorphisms $F^t \colon \mathcal{S} \times \Omega \to \mathcal{S}$ connecting $F$ 
to the identity such that
for every  $t\in[0,1]$:
\begin{itemize}
\item[(a)]
$F^t$ is a \emph{stationary lift}, i.e. it is of the form
$(q+\bar Q^t(\tau_q\o,p),\bar P^t(\tau_q\o,p));$
\item[(b)]
we have the \emph{normalization} condition:
$\frac 12\int_{-1}^{1}\bE \det({\rm d} F^t)\,{\rm d}p=1;$ 
\item[(c)]
$F^t$ is \emph{regular}, i.e. 
$\frac {dF^t}{dt}$, $F^t$ and $(F^t)^{-1}$ are almost surely bounded in ${\rm C}^r$ for sufficiently large
$r$.
\end{itemize}
\end{definition}

\subsection{Theorems}

A fixed point $(q,\,p)$ of
$F(\cdot,\,\cdot;\,\omega) \colon \mathcal{S} \to
\mathcal{S}$ is of  \emph{positive} 
(respectively \emph{negative}) type if the eigenvalues of ${\rm D}F(q,\,p;\,\omega)$ are positive 
(respectively negative).  For a set $B$, $\#B$ denote its cardinality.

\begin{theoremL} \label{epbt}
If a regular area\--preserving random twist  map $F \colon \mathcal{S} \times \Omega \to \mathcal{S}$ is isotopic to the identity, then the probability that $F(\cdot,\cdot;\omega)$ has infinitely many fixed points is one, i.e.
  $$\mathbb{P}\Big(\# {\rm Fixed \,point\, set\, of\, }F(\cdot,\,\cdot,\,;\omega)=\infty\Big)=1.$$ Moreover, if $F$ is monotone, the probability that
$F(\cdot,\cdot;\omega)$ has infinitely many fixed points of positive type is one, and the probability that
$F(\cdot,\cdot;\omega)$ has  infinitely many of negative type is one.
\end{theoremL}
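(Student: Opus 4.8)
The plan is to mimic Chaperon's generating-function proof of the classical Poincaré--Birkhoff theorem, carrying the construction through the probability space so that everything is stationary under $\tau$. First I would lift $F$ to the universal cover of $\mathcal{S}$ (which is just $\mathcal{S}$ itself, but with $q\in\mathbb{R}$ unwrapped), and decompose the isotopy $(F^t)$ into a finite composition of ``small'' twist maps, each of which admits a single-valued generating function $S^t(q,Q;\omega)$ defined on a strip. The stationarity condition (a) guarantees that these generating functions satisfy a cocycle/shift relation $S(q+a,Q+a;\omega)=S(q,Q;\tau_a\omega)$, so that the relevant action functional is genuinely a random variable over $\hat\Omega$ rather than a function on a noncompact space. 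The regularity hypotheses and the finite second moment (4) are what make the generating functions globally Lipschitz with $\omega$-independent constants, so the action sums converge.

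Next I would set up the discrete variational problem: fixed points of $F(\cdot,\cdot;\omega)$ correspond to critical points of a generating (action) function $W_n(q_0,\dots,q_{n-1};\omega)=\sum_i S_i(q_i,q_{i+1};\omega)$ on the configuration space, with the boundary-twist condition (3) providing, via the sign conditions $\pm\bar Q(\omega,\pm1)>0$, the coercivity/linking that forces critical points to exist in the interior. In the classical setting one gets at least two critical points on each torus $\mathbb{T}^n$ (one min, one of mountain-pass type), accounting for positive- and negative-type fixed points; the monotonicity hypothesis is exactly what is needed to control the index and separate the two types. The key new point is that the number of ``turns'' $n$ one is allowed to take is not bounded: by ergodicity (iii) and the Birkhoff ergodic theorem applied to the displacement $\bar Q$, for $\mathbb{P}$-almost every $\omega$ one can find arbitrarily long intervals $[q,q+L]$ over which the rotation-number-type average on the two boundary components straddles infinitely many rationals $j/n$. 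Each such rational, via the standard argument, yields a fixed point of $F^n$ which (again because the twist is stationary and the isotopy connects to the identity, ruling out the periodic-orbit degeneracies) descends to a genuine fixed point of $F$ itself, or at worst to fixed points of $F$ after passing to the suspension; accumulating over $n\to\infty$ gives infinitely many.

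More precisely, the mechanism for getting \emph{infinitely many} fixed points of $F$ (not just of iterates) is the random analogue of the following: because $\mathbb{P}$ is ergodic and positive on open sets, the function $q\mapsto \bar Q(\tau_q\omega,p)$ is, almost surely, neither eventually of one sign nor periodic, so the ``rotation interval'' of $F$ on $\mathcal{S}$ is forced to contain a nondegenerate interval of values, and the generating-function min-max produces one fixed point for each of the infinitely many scales at which the Birkhoff averages of the two boundary twists bracket a common value. I would make this quantitative by choosing an increasing sequence $L_k\to\infty$ and applying the variational argument on the window $[0,L_k]$ with Neumann-type (free) boundary conditions coming from (3), extracting critical points $x_k$ that are pairwise distinct because they live at genuinely different spatial scales; the uniform regularity bounds guarantee a priori $C^1$ estimates so that the min-max values are finite and the Palais--Smale condition holds on each finite window. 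The positive/negative type refinement then follows as in the classical monotone case: for a positive monotone twist the action function is, after the standard change of variables, a sum of strictly convex-plus-bounded pieces, so the minimum is a positive-type fixed point and a second min-max critical point is negative-type; for the negative monotone case one passes to the inverse.

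The main obstacle I expect is precisely this passage from ``fixed points of high iterates $F^n$ at rotation number $j/n$'' to ``infinitely many fixed points of $F$ itself,'' together with showing the extracted critical points over the windows $[0,L_k]$ are actually distinct and not artifacts of the truncation. In the classical periodic setting this is handled by the compactness of the cylinder and Birkhoff's pigeonhole between the two boundary rotation numbers; in the random setting one must replace that compactness by an almost-sure ergodic-theoretic statement (a Birkhoff/Kingman argument for the displacement cocycle) and check that the min-max construction is stable under enlarging the window, so that a diagonal/exhaustion argument yields genuinely infinitely many geometrically distinct fixed points with probability one. Controlling the behavior at the free boundary of each finite window --- ensuring the boundary-twisting condition (3) still forces the critical point into the interior uniformly in $k$ --- is the technical heart; everything else (convergence of action sums, Palais--Smale, index computations) is routine given the regularity and finite-second-moment hypotheses.
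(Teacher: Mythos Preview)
Your proposal has a genuine gap at exactly the point you flag: the passage from periodic points of iterates $F^n$ at rotation numbers $j/n$ to fixed points of $F$ itself. There is no mechanism for this descent in the random setting, and the paper never attempts it. Rotation numbers, iterates, and window truncations $[0,L_k]$ with free boundary conditions play no role in the actual argument.

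The paper's route is structurally different. It first proves a decomposition theorem (Theorem~\ref{dec}): using Moser's deformation trick together with the Spectral Theorem for stationary processes, the isotopy $(F^t)$ is deformed into a genuine Hamiltonian flow and then chopped into $F=F_N\circ\cdots\circ F_0$ with each $F_j$ a \emph{monotone} twist. This yields a single generalized generating function $\mathcal{L}$ of complexity $N$ (Lemma~\ref{genf:def}) and a function $\mathcal{I}(q,\xi;\omega)=\mathcal{L}(\tau_q\omega,0,\xi-q)$ on a stationary domain $D(\omega)$, whose critical points correspond directly to fixed points of $F$ itself (Proposition~\ref{easy:lem3}). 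No iterates enter.

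The infinitude of critical points is then obtained not by any rotation-interval or Birkhoff-average argument, but by an elementary ergodic oscillation lemma (Proposition~\ref{prop:key}): a bounded stationary process $q\mapsto\bar\psi(\tau_q\omega)$ is either almost surely constant or oscillates infinitely often. For $N=0$ this immediately gives infinitely many local maxima and minima of $\psi(q)=\mathcal{L}(\tau_q\omega,0)$; Theorem~\ref{prop9.2} then identifies local maxima with negative-type and local minima with positive-type fixed points, giving the monotone refinement. For $N\ge 1$ the argument combines this oscillation lemma (applied to $\hat K(\omega)=\max_p \bar K(\omega,p)$) with a gradient-flow dichotomy on the domain $D$: either the $q$-component of the gradient flow is unbounded, forcing a continuum of critical points by density of orbits, or it stays bounded, and then a topological obstruction (the inward/outward boundary pieces of $\partial D$ are not homeomorphic, Lemma~\ref{lem10.2}) forces interior critical points; stationarity of the critical set $Z(\omega)$ under $\tau$ then upgrades ``nonempty'' to ``unbounded in $q$'' almost surely.

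So the ingredients you are missing are: (i) the monotone decomposition via Moser's trick plus the spectral representation, which replaces your ``small twist'' chopping and gives a generating function for $F$ directly; and (ii) the oscillation-of-stationary-processes argument, which replaces your rotation-number/window scheme entirely and is what actually produces infinitely many critical points.
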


For simplicity of notation, when it is clear from the context, 
sometimes we write $F$ instead of $F(\cdot,\cdot;\omega)$, even if $\omega$ is fixed.

\begin{theoremL} \label{dec}
Let
$
F \colon  \mathcal{S} \times \Omega \to \mathcal{S}
$
be a regular area\--preserving random twist map. 
Suppose that $F$ is isotopic to the identity.
Then there exists an integer $N \geq 0$ and regular area\--preserving random twists
$F_j$, where $0 \leq j \leq N,$
such that for each fixed $\omega \in \Omega$, we have a decomposition:
\begin{equation}\label{eq4.1}
F(\cdot,\cdot;\omega)=F_N(\cdot,\cdot;\omega)\circ \ldots\circ F_2(\cdot,\cdot;\omega)\circ F_1(\cdot,\cdot;\omega)\circ F_0(\cdot,\cdot;\omega),
\end{equation}
where:
\begin{itemize}
\item 
$F_j$ is negative monotone if  $j$ is even;
\item
 $F_j$ is positive monotone if $j$ is odd.
 \end{itemize}
 \end{theoremL}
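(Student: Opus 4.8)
The plan is to mimic, in the random/stationary setting, the classical fact that an area‑preserving twist map isotopic to the identity can be written as a composition of "small" monotone twists, alternating in sign. I would start from the isotopy $(F^t\mid t\in[0,1])$ provided by Definition~\ref{idf}, with $F^0=\mathrm{id}$ and $F^1=F$, each $F^t$ a regular stationary lift satisfying the normalization $\frac12\int_{-1}^1\bE\det(\mathrm dF^t)\,\mathrm dp=1$. Using uniform (in $\omega$ and $t$) $\mathrm C^r$ bounds on $F^t,(F^t)^{-1}$ and $\frac{dF^t}{dt}$, partition $[0,1]$ into $0=t_0<t_1<\dots<t_K=1$ fine enough that each "increment" $G^{(k)}:=F^{t_k}\circ(F^{t_{k-1}})^{-1}$ is $\mathrm C^1$‑close to the identity uniformly in $\omega$; in particular its $\bar Q$‑component has $\partial_p\bar Q$ of small absolute value, so $G^{(k)}$ is itself a regular area‑preserving random twist (boundary invariance and the finite‑second‑moment bound survive since compositions and inverses of regular stationary lifts are regular stationary lifts, and the second moment is controlled by the uniform bounds). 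Then $F=G^{(K)}\circ\cdots\circ G^{(1)}$, with every factor close to the identity.

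The second step is to turn each near‑identity factor $G^{(k)}$ into a product of one positive and one negative monotone twist. Here I would use the standard trick: if $T$ is a shear $T(q,p)=(q+cp,p)$ for a suitable constant $c$ (a genuine monotone twist, positive or negative according to the sign of $c$, with the correct boundary behavior once one checks $\pm\bar Q(\omega,\pm1)=\pm c>0$ or $<0$), then for $G^{(k)}$ sufficiently $\mathrm C^1$‑close to the identity the map $G^{(k)}\circ T^{-1}$ is a monotone twist of the \emph{opposite} sign, because monotonicity of $\bar Q$ is an open condition and $T^{-1}$ contributes a definite‑sign shear that dominates the small off‑diagonal derivative of $G^{(k)}$. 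Thus $G^{(k)}=\big(G^{(k)}\circ T^{-1}\big)\circ T$ is a product of a negative monotone and a positive monotone twist (choosing the sign of $c$ so the orders match the parity requirement). Both factors are again regular stationary lifts — the shear $T$ is manifestly stationary since $\bar Q=cp$ is independent of $\omega$, and area‑preservation is immediate.

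The third step is bookkeeping: concatenating the two‑factor decompositions of $G^{(1)},\dots,G^{(K)}$ gives a word in monotone twists; one then merges adjacent factors of the same sign (a composition of two positive monotone twists is positive monotone, and likewise for negative — this uses that $\bar Q$ of a composition of stationary lifts is again increasing when both pieces are, together with the area‑preservation and boundary conditions) and, if necessary, prepends or appends a trivial identity shear to arrange that the total number $N+1$ of factors has the leftmost factor $F_N$ negative for $j$ even and $F_j$ positive for $j$ odd, exactly as in the statement. Since every operation used (composition, inversion, fine subdivision) preserves regularity with $\omega$‑independent bounds, all $F_j$ are regular area‑preserving random twists.

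The main obstacle I anticipate is Step~2 done \emph{uniformly in $\omega$}: one must be sure that "close enough to the identity to make $G^{(k)}\circ T^{-1}$ monotone" can be achieved by a single subdivision of $[0,1]$ that works for $\bP$‑almost every $\omega$ simultaneously, and that the resulting factors still satisfy the finite‑second‑moment condition (4) rather than merely the pointwise‑in‑$\omega$ conditions. This is exactly where the \emph{regularity} hypothesis (uniform $\mathrm C^r$ bounds independent of $\omega$) and the normalization condition~(b) are essential — the former gives the uniform subdivision, and the latter guarantees that the shear constants $c$ can be chosen consistently so the normalization $\frac12\int_{-1}^1\bE\det(\mathrm dF_j^t)\,\mathrm dp$ is respected along the way. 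A secondary but routine point is verifying that each constructed factor meets the \emph{boundary twisting} clause (3) with the correct strict sign of $\pm\bar Q(\omega,\pm1)$; this follows because on the boundary $p=\pm1$ the twist is governed by the shear term $T$, whose contribution $\pm c$ has a fixed sign by construction.
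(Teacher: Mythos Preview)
Your Step~1 contains a genuine gap: the increments $G^{(k)}=F^{t_k}\circ(F^{t_{k-1}})^{-1}$ need not be area\--preserving. Definition~\ref{idf} does \emph{not} require the intermediate maps $F^t$ to be area\--preserving; it only imposes the averaged normalization $\tfrac12\int_{-1}^1\bE\det({\rm d}F^t)\,{\rm d}p=1$, which is strictly weaker than $\det({\rm d}F^t)\equiv 1$. So from the hypotheses you cannot conclude that $G^{(k)}$ is an area\--preserving random twist, and the shear trick in your Step~2 (which needs an area\--preserving input to output monotone \emph{area\--preserving} factors) does not get off the ground.

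This is precisely the difficulty the paper spends most of its proof on. Before subdividing, one must replace $(F^t)$ by a genuinely area\--preserving stationary path $(\L^t)$ connecting ${\rm id}$ to $F$. The paper does this via Moser's deformation trick: set $\rho^t=\det({\rm d}F^t)$ and seek $G^t$ with $(G^t)^*(\rho^t\,{\rm d}x)={\rm d}x$, so that $\L^t=F^t\circ G^t$ is area\--preserving. This reduces to solving the Poisson equation $\Delta u=1-\rho^t$ on the strip with Neumann data $u_p(q,\pm1)=0$, \emph{in a $q$\--stationary way}. The existence of such a stationary solution is nontrivial in the random setting and is obtained via the Spectral Theorem for stationary processes (writing $\hat\eta(q,p)=\int {\rm e}^{{\rm i}qz}Y({\rm d}z,p)$ and building $u$ from explicit kernels in $z$). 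Only after this correction does the paper carry out your Steps~1--2: it shows $\L^t$ is the flow of a stationary Hamiltonian, obtains the uniform estimate $\|\L^{s,t}-{\rm id}\|_{{\rm C}^1}\le c_2(t-s)$, subdivides, and applies the shear $\varphi^0(q,p)=(q+p,p)$ exactly as you propose. Your outline is therefore correct for Step~4 of the argument, but it omits the substantive analytic content (Steps~1--3) needed to manufacture an area\--preserving stationary isotopy from the merely normalized one given by Definition~\ref{idf}.
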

 
 The integer $N$ in  (\ref{eq4.1}) is  the \emph{complexity of} $F$.  Statements 
 \cite[Propositions 2.6 \& 2.7, Lemma 2.16]{Le91} 
have the flavor to Theorem~\ref{dec} for classical twists (see also \cite[Section~9.2]{MS98}).

\begin{theoremL}\label{epbt0}
Let $N\geq0$ be an integer and let $F_j \colon \mathcal{S} \times \Omega \to \mathcal{S}$, where $0\leq j \leq N$,  
be regular area\--preserving random monotone twists such that
$F_j$ is negative monotone if  $j$ is even, and $F_j$ is positive monotone if $j$ is odd.  Then: 
\begin{enumerate}[{\rm (1)}]
\item \label{+1}
the probability that $F_i(\cdot,\,\cdot;\, \omega)$ has  infinitely many fixed points of negative type is  one,  and 
the probability that it has infinitely many fixed points of positive type  is one; 
\item \label{+2}
the composite map 
$
F_N(\cdot,\cdot;\omega)\circ \ldots\circ F_2(\cdot,\cdot;\omega)\circ F_1(\cdot,\cdot;\omega)\circ F_0(\cdot,\cdot;\omega),
$
is an area preserving random twist and the probability that 
 $F(\cdot,\cdot;\omega) \colon \mathcal{S} \to \mathcal{S}$  has infinitely many fixed points is one.   
\end{enumerate}
\end{theoremL}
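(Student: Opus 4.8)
The plan is to reduce both parts to a single elementary statement about stationary processes on the real line, and then to produce the required process via generating functions. The statement I would isolate first is: \emph{let $q\mapsto u(q;\omega)$ be, for $\bP$-almost every $\omega$, a $C^1$ function which is covariant, $u(q;\tau_a\omega)=u(q+a;\omega)$ for all $a$, and whose derivative $u'$ is bounded uniformly in $\omega$; then $\bE\,[u'(0;\cdot)]=0$, and with probability one $u(\cdot;\omega)$ has infinitely many critical points, in fact infinitely many local minima and infinitely many local maxima unless $u(\cdot;\omega)$ is constant.} The mean-zero assertion holds because $q\mapsto\bE\,[u(q;\cdot)]$ is constant by $\tau$-invariance of $\bP$. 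For the rest: a nonconstant $u(\cdot;\omega)$ cannot be monotone on any half-line --- a covariant $C^1$ process that is eventually monotone has $u'\to0$ along the path, hence, being stationary with an almost sure limit, $u'\equiv0$, a contradiction --- so $u'(\cdot;\omega)$ takes both signs on every half-line and therefore changes sign; by covariance and $\tau$-invariance the event $A=\{\omega : u'(\cdot;\omega)\ \textup{changes sign in}\ (-1,1)\}$ has positive probability; by the Birkhoff ergodic theorem for the flow $\tau$ the set $\{a\in\bR : \tau_a\omega\in A\}$ has positive density for almost every $\omega$, which by covariance forces $u'(\cdot;\omega)$ to change sign inside $(a-1,a+1)$ for an unbounded set of $a$, hence infinitely many local maxima and minima.

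For part (\ref{+1}), fix a factor $G:=F_j$, say positive monotone (the negative monotone case is entirely analogous). The twist hypothesis, $p\mapsto\bar Q(\omega,p)$ increasing, makes $(q,p)\mapsto(q,Q)$ a diffeomorphism of $\mathcal{S}$ for almost every $\omega$, and area-preservation together with the simple connectivity of $\mathcal{S}$ yields an exact generating function $h(q,Q;\omega)$ with $p=-\partial_q h$ and $P=\partial_Q h$; stationarity of $G$ allows one to choose $h$ so that $h(q,Q;\omega)=\eta(Q-q;\tau_q\omega)$ for some measurable $\eta$. Setting $u(q;\omega):=h(q,q;\omega)$, one checks $u'(q;\omega)=P(q,p^*(q;\omega);\omega)-p^*(q;\omega)$, where $p^*(q;\omega)\in(-1,1)$ is the unique momentum with $Q(q,p^*;\omega)=q$ (it exists by boundary twisting and the intermediate value theorem), so the fixed points of $G(\cdot;\omega)$ are exactly the points $(q,p^*(q;\omega))$ with $u'(q;\omega)=0$; regularity and boundary invariance ($\bar P(\omega,\pm1)=\pm1$) make $u$ a covariant $C^1$ process with $u'$ uniformly bounded, so the statement above gives infinitely many fixed points of $G(\cdot;\omega)$ for almost every $\omega$. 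To separate them by type I would differentiate the defining relations at a fixed point, obtaining $\operatorname{tr}\mathrm{D}G=2+u''(q;\omega)/|\partial_{qQ}h|$ with $\partial_{qQ}h<0$: a local minimum of $u$ then gives $\operatorname{tr}\mathrm{D}G\ge2$, hence the sign pattern of a positive-type fixed point, a local maximum gives $\operatorname{tr}\mathrm{D}G\le2$ and the opposite, and as the two alternate and occur infinitely often one obtains infinitely many fixed points of each type.

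For part (\ref{+2}), I would first verify that $F:=F_N\circ\cdots\circ F_0$ satisfies Definition~\ref{def1}: it is almost surely an area-preserving diffeomorphism and obeys boundary invariance because each factor does, it has the stationary form (\ref{then1}) since a composition of stationary lifts is again a stationary lift, its derivatives have finite second moments by regularity and the finite second moments of the $F_j$, and its boundary-twist behaviour is dictated by the alternating sign pattern of the factors along the (flat) boundary orbits. A fixed point of $F(\cdot;\omega)$ is the same datum as a closed discrete trajectory $q_0\mapsto q_1\mapsto\cdots\mapsto q_N\mapsto q_{N+1}=q_0$ for the factors, and by the Chaperon generating-function calculus this is precisely a critical point of the discrete action $W(q_0,\dots,q_N;\omega)=\sum_{i=0}^{N}h_i(q_i,q_{i+1};\omega)$ with $q_{N+1}:=q_0$, where $h_i$ is the generating function of $F_i$. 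The alternating monotonicity is exactly what allows the intermediate variables to be eliminated by a min-max --- minimizing over those attached to the negative monotone factors and maximizing over those attached to the positive monotone ones --- producing a reduced function $u(q_0;\omega)$ whose critical points are in bijection with those of $W$, that is, with the fixed points of $F(\cdot;\omega)$. Regularity makes these extrema attained on an $\omega$-independent compact set and makes $u$ of class $C^1$, covariant, with $u'$ uniformly bounded, so the statement of the first paragraph applies and $F(\cdot;\omega)$ has infinitely many fixed points for almost every $\omega$.

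I expect the main obstacle to be the Chaperon reduction in part (\ref{+2}): one must check that the alternating signature of the factors really does make the prescribed min-max well posed (the partial Hessians having the correct definiteness), that the resulting $u$ is genuinely $C^1$ with bounded, mean-zero derivative and not merely semiconcave or semiconvex, and that its critical points recover the fixed points of $F$ with the correct multiplicity --- all uniformly in $\omega$, which is where the regularity hypothesis is indispensable. A secondary difficulty, in part (\ref{+1}), is pinning down which extremum of $u$ corresponds to which type and treating degenerate critical points, including the case $u\equiv\mathrm{const}$ where $G(\cdot;\omega)$ carries a whole curve of fixed points; once these points are settled, everything reduces to the ergodic recurrence of the first paragraph together with routine bookkeeping with generating functions.
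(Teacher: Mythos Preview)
Your treatment of part~(\ref{+1}) is essentially the paper's: the monotone generating function produces a scalar stationary process $\psi(q;\omega)=h(q,q;\omega)$, an ergodic argument forces oscillation (Proposition~\ref{prop:key}), and the trace identity $\operatorname{tr}\mathrm{D}G-2=\psi''/(-h_{qQ})$ sorts critical points by type (Theorem~\ref{prop9.2}). Your ergodic step is a little loose---eventual monotonicity of $u$ does not by itself force $u'\to 0$---but the paper's version (bound $\psi$ via regularity, then argue that a bounded stationary process with a one-sided limit is constant) repairs this with no new idea.

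The genuine gap is in part~(\ref{+2}). The min--max elimination of $\xi_1,\dots,\xi_N$ does not go through as you describe. Monotonicity of $F_i$ fixes the sign of the \emph{mixed} partial $\partial_{qQ}h_i$, not of the diagonal entries; the second variation of $W$ in $\xi_i$ is $\partial_{QQ}h_{i-1}+\partial_{qq}h_i$, which carries no sign from the hypotheses, so there is no convex/concave splitting to make your min--max well posed. Even if one defined $u(q_0)$ as a min--max value, it would in general be only Lipschitz, with no bijection between its critical points and those of $W$. The paper avoids this reduction entirely. For $N=1,2$ it keeps the full function $K$ on $\bR\times[-1,1]^N$, shows $\nabla K$ points inward on the whole boundary (Lemmas~\ref{gradlem2} and~\ref{lem9.2}), and then runs a two-stage argument: a fiberwise maximum gives infinitely many local maxima via your oscillation lemma, and a gradient-flow argument produces the remaining critical points (Theorems~\ref{keyresult} and~\ref{keyresult2}). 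For $N\geq 3$ the inward/outward pattern on $\partial D$ is genuinely mixed (Proposition~\ref{above'}), and the existence of a critical point is forced topologically: $\partial_{\rm in}D$ and $\partial_{\rm out}D$ are not homeomorphic (Lemma~\ref{lem10.2}), so the gradient flow cannot carry one onto the other. A stationarity trick then upgrades one critical point to infinitely many (Theorem~\ref{keyresult3}). This Conley-index style argument is what replaces the one-dimensional reduction you propose; the difficulty you flagged is real and is not overcome by sharpening the min--max, but by abandoning it.
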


The classical Poincar\'e\--Birkhoff Theorem from 1912 Appendix A (Section~\ref{sec:classical}) is
 a particular example of our stochastic setting; we explain this by giving the concrete stochastic models
which recover the classical theory in Appendix B (Section~\ref{appendixB}).

\begin{example} \label{expr}
The following are quadruples 
$(\Omega, \,\cF,\, \mathbb{P},\,\tau)$ as in \eqref{omega}. In each case $\cF$ is
 the Borel $\sigma$-algebra associated with the natural topology on $\Omega$.
(i) Let $v \in \mathbb{R}^k$ such that $\langle v,\,   n\rangle=0$ for $n \in \mathbb{Z}^k$ implies 
$n=0$.  Let $\Omega=\mathbb{T}^k:=(S^1)^k$ and  
$
\tau_a\omega:=\omega+av \,(\!\!\!\!\!\!\,\, \mod 1),
$
where $S^1$ is $[0,1]$ with
$0$ and $1$ identified.
Let $\mathbb{P}$ be the normalized Lebesgue (Haar) measure.
(ii)
Let $\Omega$ be the set of discrete infinite subsets of $\mathbb{R}$. Every 
$\omega \in \Omega$ may be written as
$\omega=\{x_i \,\, | \,\, i \in \mathbb{Z}\} \subset \mathbb{R},$ and we define
$
\tau_a(\omega):=\{x_i+a\,\, | \,\, i \in \mathbb{Z}\}. 
$
Let $\mathbb{P}$ be a Poisson random measure of intensity $1$.  
\end{example}

\begin{remark}
\normalfont
 If $F$ is a flow map of a Hamiltonian system associated with a smooth
Hamiltonian function of compact support, then it is isotopic to identity and the condition $(b)$ of Definition~\ref{idf}
is trivially satisfied because $\det {\rm D}F=1$. We refer to part {\bf{(v)}} of Section~\ref{appendixB} for more details. As we will see in the process of proving Theorem~\ref{dec}, if $F$ is isotopic to identity and the path $(F^t:\ t\in[0,1])$ may be deformed to a new path that is the flow of a stationary Hamiltonian system.
\end{remark}

\begin{remark}
\normalfont
The first breakthrough on A'rnold's Conjecture was done by Conley and Zehnder~\cite{CoZe1983}. According to
their theorem, any smooth symplectic map $F:\bT^{2d}\to\bT^{2d}$ that is isotopic to identity has at least $2d+1$ many fixed points. For the stochastic analog of \cite{CoZe1983}, we take a $2d$-dimensional stationary 
process $X(x;\o)=\bar X(\tau_x\o)$ with $\bar X:\O\to\bR^{2d};\ x\in\bR^{2d},$ and assume that 
its lift $F(x;\o)=x+X(x;\o)$ is symplectic with probability one. Our strategy of proof is also applicable to such 
random symplectic maps. The main ingredients for proving results analogous to Theorems~\ref{epbt}-\ref{epbt0} are 
 Morse Theory and Spectral Theorem for  multi-dimensional stationary processes. In a subsequent paper,
we will work out a generalization of Conley and Zehnder's Theorem in the stochastic setting.
\end{remark}

\begin{figure}[H]
  \centering
{\tiny
  \begin{tabular}{l|cccc}
  Complexity & $N=0$ & $N=1$ & $N=2$ & $N \geq 3$\\
    \hline
    {Existence} 
    &  Prop.~\ref{prop:key} &  Thm.~\ref{keyresult}(\ref{xa}) & Thm.~\ref{keyresult2}(\ref{xxa}) & Thm.~\ref{keyresult3}(\ref{xxxb})\\
   {Additional} 
   & Thms.~\ref{prop9.2} \& \ref{prop9.5} & Thms.~\ref{keyresult}(\ref{xb}) \& \ref{prop9.7} & Thm.~\ref{keyresult2}(\ref{xxb})&  Thm.~\ref{keyresult3}(\ref{xxxa})
     \end{tabular}}
  \caption{{\small Depending on $N$ the 
proof of 
Theorem~\ref{epbt0} (\ref{+2}) is different.  {Positivity/negativity  in Theorem~\ref{epbt} follow from Theorem~\ref{prop9.2};
Theorem~\ref{prop9.2} constructs monotone twists; 
Theorem~\ref{prop9.5} describes the ``density'' (Definition~\ref{density}) and ``spectral" nature of 
fixed points.  Theorem~\ref{prop9.7} does the analogue if $N=1$.   Theorems~\ref{keyresult}(\ref{xb}) \& \ref{keyresult2} (\ref{xxb}), and \ref{keyresult3}(\ref{xxxa}),
describe further the fixed points. }}}
  \label{fig:comparison}
\end{figure}
Poincar\'e understood that preserving area has global implications for a dynamical
system. We give instances when this connection persists in a random setting. We do it
by using random generating functions to reduce the proofs to
finding critical points of random maps. In Section~\ref{ggfsec}
we define them, and explain how to use them to show the main results.  Section~\ref{sec7}  proves 
Theorem~\ref{dec}. The  sections which follow
contain a case\--by\--case proof ($N=0$, $N=1$, $N=2$, $N\geq 3$) 
of Theorem~\ref{epbt0}. For $N=0,\,1$ we have additional results. Section~\ref{sec:classical}  reviews the classical theory.
We recommend \cite{ArAv1968, Ka, Ko1957, Mo1973, Sm1967}  for modern accounts of dynamics, and 
\cite{BrHo2012, HoZe1994, MS98, Pol01} for treatments emphasizing symplectic techniques.

\section{\textcolor{black}{Calculus of random generating functions}} \label{ggfsec}

We construct the principal novelty of the paper, random generating functions, and explain how to use them
to find fixed points.  Recall that $\Omega$ is  as in (\ref{omega}).

\begin{definition}
 We say that a measurable function $G:\O\to\bR$ is \emph{$\o$-differentiable} if the limit
$
\nabla G(\o):=\lim_{t \downarrow 0} t^{-1} \,(G( \tau_t\omega) -G( \omega))
$
exists for $\bP$\--almost all $\o$.  For a measurable map 
$K \colon \Omega\times  [0,\,1] \to \mathbb{R}$ we write  $K_p=\frac{\partial K}{\partial p}$  and
 $K_\o=\frac{\partial K}{\partial \omega}$ for the partial derivatives of $K$. We say that 
 \emph{$K$ is ${\rm C}^1$}  if the partial derivatives  
of $K$ exist and are continuous for $\bP$\--almost all $\o$.
 \end{definition}

Given an area\--preserving random twist as in \eqref{then1}, 
 consider the sets (see Figure~\ref{Lfig2}):
\begin{eqnarray} \label{abar}
\left\{ \begin{array}{rl} \label{abar}
\bar{A}:=\{(\omega;\,v)\,\, | \,\, \bar Q(\omega,\,-1 ) \leq v \leq \bar Q(\omega,\,1 )\}  \subseteq 
 \Omega\times \mathbb{R} \\
\bar{A}_\o:=\{v\,\, | \,\, (\omega;\,v)\in \bar{A}\} \subseteq \mathbb{R}\\
 \bar{A}^v:=\{\o\,\, | \,\, (\omega;\,v)\in \bar{A}\}\subseteq \Omega \\
{A}_\o:=\{(q,Q)\,\, | \,\, (\tau_q\omega;\,Q-q)\in \bar{A}\} .
          \end{array} \right . 
\end{eqnarray}
We  write 
$F^{-1}(P,Q)=(q(Q,P),p(Q,P))$.

\begin{figure}[htbp]
\begin{center}
\psfrag{fudge}{$Q$}
\psfrag{yy}{$q \mapsto Q(q,\,-1)$}
\psfrag{qq}{$q$}
\psfrag{candy}{$q \mapsto Q(q,\,1)$}
\includegraphics[width=2.1in]{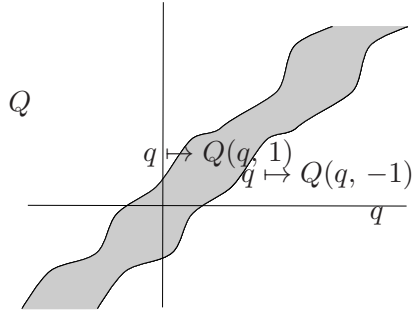}
\caption{{\small ${A}_\o$  in \eqref{abar} bounded by
  the graphs of $q \mapsto Q(q,\,1)$, $q \mapsto Q(q,\,-1)$, respectively.}}
\label{Lfig2}
\end{center}
\end{figure}

\begin{definition}
Given an area\--preserving random twist map  \eqref{then1}, we say  that
$\cL:{\bar A}\times \bR^N\to\bR$
 is a {\em{generalized generating 
function  of complexity  $N$}}  if $\cL$ is $\op{C}^1$ and the
function 
$
\cG(q,Q;\xi)=\cG(q,Q;\xi,\o):=\cL(\tau_q\o,Q-q,\xi_1-q,\dots,\xi_N-q),
$
with, $\xi=(\xi_1,\dots,\xi_N)$,  satisfies:
\begin{equation} \label{key:for}
\cG_\xi(q,Q;\xi,\o)=0\Rightarrow F\left(q,-\cG_q(q,Q;\xi,\o);\o\right)=\left(Q,\cG_Q(q,Q;\xi,\o)\right).
\end{equation}
\end{definition}

Our interest in generalized generating functions is due to the following.

\begin{prop} \label{easy:lem3}
Let $\cL$ be a generalized generating function for $F$. 
Set 
$$\cI(q,\xi;\o)=\cL(\tau_q\o,0,\xi_1-q,\dots,\xi_N-q).$$ 
If $(\bar q,\bar\xi)$  is a critical point for $\cI(\cdot,\cdot;\o)$, then 
$\vec{x}:=(\bar q,\, -\mathcal{G}_q(\bar q,\,\bar q;\, \bar\xi))$ is a 
fixed point of $F(\cdot,\cdot;\o)$.
\end{prop}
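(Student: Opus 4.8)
The plan is to reduce the statement to a single chain-rule computation combined with the defining implication \eqref{key:for} of a generalized generating function.

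First I would observe that $\cI$ is simply $\cG$ restricted to the diagonal $Q=q$: by the very definitions,
$$\cG(q,q;\xi,\o)=\cL\big(\tau_q\o,\,q-q,\,\xi_1-q,\dots,\xi_N-q\big)=\cI(q,\xi;\o).$$
By boundary twisting (item (3) of Definition~\ref{def1}) one has $\bar Q(\o,-1)<0<\bar Q(\o,1)$ for $\bP$-almost all $\o$, so $0$ lies in the interior of $\bar A_{\tau_q\o}$; hence $\cG(q,Q;\xi,\o)$ is defined for $Q$ in a neighbourhood of $q$, and $\cI$ is a genuine $\op{C}^1$ function of $(q,\xi)\in\bR\times\bR^N$, so that ``critical point'' carries its usual meaning. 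I would then differentiate. Since the substitution $Q=q$ does not involve the auxiliary variables, for each $i$ we get $\cI_{\xi_i}(q,\xi;\o)=\cG_{\xi_i}(q,q;\xi,\o)$, so vanishing of $\nabla_\xi\cI$ at $(\bar q,\bar\xi)$ is precisely $\cG_\xi(\bar q,\bar q;\bar\xi,\o)=0$. For the $q$-derivative, $q$ enters $\cG(q,q;\xi,\o)$ through both of the first two slots, so the chain rule yields
$$\cI_q(q,\xi;\o)=\cG_q(q,q;\xi,\o)+\cG_Q(q,q;\xi,\o),$$
and therefore $\cI_q(\bar q,\bar\xi;\o)=0$ says exactly $\cG_Q(\bar q,\bar q;\bar\xi,\o)=-\cG_q(\bar q,\bar q;\bar\xi,\o)$.

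Finally I would invoke \eqref{key:for} at $(q,Q)=(\bar q,\bar q)$ with $\xi=\bar\xi$: since $\cG_\xi(\bar q,\bar q;\bar\xi,\o)=0$, the implication gives
$$F\big(\bar q,\,-\cG_q(\bar q,\bar q;\bar\xi,\o);\,\o\big)=\big(\bar q,\,\cG_Q(\bar q,\bar q;\bar\xi,\o)\big)=\big(\bar q,\,-\cG_q(\bar q,\bar q;\bar\xi,\o)\big),$$
where the last equality is the relation just derived from $\cI_q=0$. Writing $\bar p:=-\cG_q(\bar q,\bar q;\bar\xi)$, this is exactly $F(\bar q,\bar p;\o)=(\bar q,\bar p)$, i.e. $\vec x=(\bar q,\bar p)$ is a fixed point of $F(\cdot,\cdot;\o)$, as claimed.

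There is no real analytic obstacle here: the whole argument is an identity valid pointwise in $\o$, modulo the almost-sure properties of Definition~\ref{def1}. The only point demanding care is the chain rule for $\cI_q$ — one must retain the contribution of the second slot $Q=q$, because it is precisely the term $\cG_Q$ that matches the second coordinate of $F$ in \eqref{key:for} and thereby forces the fixed-point equation in the $p$-variable; omitting it would break the argument.
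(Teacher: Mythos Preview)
Your proof is correct and follows exactly the same route as the paper's: both identify $\cI$ as the restriction of $\cG$ to the diagonal $Q=q$, read off from criticality that $\cG_\xi(\bar q,\bar q;\bar\xi)=0$ and $\cG_Q(\bar q,\bar q;\bar\xi)=-\cG_q(\bar q,\bar q;\bar\xi)$, and then invoke \eqref{key:for}. Your version is simply more explicit about the chain rule and the domain issue, which the paper leaves implicit.
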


\begin{proof} Observe that if $(\bar q,\bar\xi)$ is a critical point of $\cI$, then by the definition of $\cG$,
$
 \mathcal{G}_Q(\bar q,\,\bar q;\, \bar\xi)=- \cG_q(\bar q,\,\bar q;\,\bar \xi)$ and
$\cG_\xi(\bar q,\, \bar q;\, \bar\xi)=0$. Since $\cL$ is a generating function, $\cG_\xi=0$ gives $F(\vec x)=\vec x.$ 
\end{proof}

The strategy to prove Theorem \ref{epbt0} is to show that fixed points of $F$ are in  correspondence with critical points of the associated {random} generating function $\mathcal{G}$, and 
then prove existence of critical points of $\mathcal{G}$.    
Viterbo  has used generating functions with great success~\cite{Vi2011}.  
 Gol\'e~\cite{Go2001}  describes several results in this direction.

\section{Proof of Theorem \ref{dec}} \label{sec7}

We begin by introducing  stationary lifts.

\begin{definition}
A function $f(q,\omega)$ is \emph{stationary} if  $f(q,\omega)=\bar f (\tau_q\omega)$  
for a continuous
$\bar f \colon \Omega \to \mathbb{R}$. We say that $f$ is a \emph{stationary lift} if
$
f(q,\omega)=q+\bar f (\tau_q\omega)
$
for a continuous $\bar f \colon \Omega \to \mathbb{R}$. 
\end{definition}

\begin{definition}
A vector\--valued
map $f(q,p;\omega)$ with $f( \cdot,\omega) \colon \mathbb{R}^2 \to \mathbb{R}^2$
is \emph{$q$\--stationary} if 
$f(q,p,\omega)=\bar f (\tau_q\omega,p)$
for some $\bar f \colon \Omega \times \mathbb{R} \to \mathbb{R}^2$. A similar
definition is given for $f(\cdot,\omega) \colon \mathcal{S} \to \mathbb{R}^2$.
We say that such $f$ is a \emph{$q$\--stationary lift} if $f$ can be expressed as
$
f(q,p,\omega)=(q,0)+\bar f (\tau_q\omega,p).
$
\end{definition}

\begin{prop} \label{prop:lifts}
The following properties hold:
\begin{enumerate}[{\rm (P.1)}]
\item \label{item:1}
If $f(q,\omega)$ is an increasing stationary lift in ${\rm C}^1$, then $f^{-1}$ is 
an increasing lift. The same holds for $q$\--stationary diffeomorphism lifts $f(q,p,\o)$.
\item \label{item:2}
The composition of $q$\--stationary lifts is a $q$\--stationary lift.
If $f$ is a $q$\--stationary lift and $g$ is $q$\--stationary, $g \circ f$ is $q$\--stationary.
\item \label{item:3}
For every differentiable $\bar f:\O\to\bR$ we have that $\bE \nabla \bar f=0.$
\end{enumerate}
\end{prop}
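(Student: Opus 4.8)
\textbf{Proof proposal for Proposition~\ref{prop:lifts}.}

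The plan is to treat the three items essentially independently, reducing each to a direct computation with the cocycle identity $\tau_{a+b}=\tau_a\circ\tau_b$ together with $\tau_0=\mathrm{id}$, plus the $\tau$-invariance and ergodicity of $\bP$.

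For (P.\ref{item:1}), I would argue as follows. Suppose $f(q,\o)=q+\bar f(\tau_q\o)$ is increasing in $q$ and ${\rm C}^1$; write $g(\cdot,\o):=f(\cdot,\o)^{-1}$. Fix $\o$ and let $Q:=f(q,\o)$, so $g(Q,\o)=q$. The key is the translation identity $f(q+a,\tau_{-a}\o')$ evaluated appropriately: for the lift $f$ one checks $f(q+a,\o)=q+a+\bar f(\tau_{q+a}\o)=a+f(q,\tau_a\o)$. Hence $f(q,\tau_a\o)=f(q+a,\o)-a$, which says the graph of $f(\cdot,\tau_a\o)$ is the graph of $f(\cdot,\o)$ shifted by $(-a,-a)$; inverting, $g(Q,\tau_a\o)=g(Q+a,\o)-a$. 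Taking $a=Q$ gives $g(Q,\o)=Q+\big(g(0,\tau_Q\o)-? \big)$ — more precisely set $\bar g(\o):=g(0,\o)-0$; then from $g(Q,\tau_a\o)=g(Q+a,\o)-a$ with the substitution $Q=0$, $a\mapsto q$, one obtains $g(q,\o)=q+\bar g(\tau_q\o)$ with $\bar g(\o):=g(0,\o)$, and $\bar g$ is continuous because $f$ is a ${\rm C}^1$ diffeomorphism depending continuously on $\o$. Monotonicity of $g(\cdot,\o)$ is immediate from monotonicity of $f(\cdot,\o)$. The $q$-stationary diffeomorphism case is identical: writing $f(q,p;\o)=(q,0)+\bar f(\tau_q\o,p)$ one gets $f(q+a,p;\o)=(a,0)+f(q,p;\tau_a\o)$, so the inverse satisfies the analogous shift-covariance, giving a $q$-stationary lift; here one uses the inverse function theorem fibrewise in $\o$ and the uniform structure to get continuity of the new $\bar f$.

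For (P.\ref{item:2}), let $f(q,p;\o)=(q,0)+\bar f(\tau_q\o,p)$ be a $q$-stationary lift and $g(q,p;\o)=\bar g(\tau_q\o,p)$ be $q$-stationary, and write $f(q,p;\o)=(Q,P)$ with $Q=q+\bar Q(\tau_q\o,p)$, $P=\bar P(\tau_q\o,p)$ (components of $\bar f$). Then
\[
(g\circ f)(q,p;\o)=g(Q,P;\o)=\bar g(\tau_Q\o,P)=\bar g\big(\tau_{\bar Q(\tau_q\o,p)}(\tau_q\o),\ \bar P(\tau_q\o,p)\big),
\]
using $\tau_Q=\tau_{q+\bar Q(\tau_q\o,p)}=\tau_{\bar Q(\tau_q\o,p)}\circ\tau_q$. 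Define $\bar h(\o,p):=\bar g\big(\tau_{\bar Q(\o,p)}\o,\ \bar P(\o,p)\big)$; then $(g\circ f)(q,p;\o)=\bar h(\tau_q\o,p)$, so $g\circ f$ is $q$-stationary. If moreover $g$ is a $q$-stationary \emph{lift}, $\bar g=(q\text{-part }g)$ has the form $\bar g(\o,p)=(\,\cdot\,)+\ldots$; repeating the computation with the extra $(Q,0)$ term and $Q=q+\bar Q(\tau_q\o,p)$ shows the composite acquires exactly one leading $(q,0)$, so $g\circ f$ is again a $q$-stationary lift. This is purely formal bookkeeping with the cocycle identity.

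For (P.\ref{item:3}), this is where the ergodic-theoretic input enters and is the only substantive point. Since $\bar f$ is differentiable in the $\o$-direction, $\nabla\bar f(\o)=\lim_{t\downarrow0}t^{-1}(\bar f(\tau_t\o)-\bar f(\o))$ exists a.s.; I would first note that by $\tau$-invariance of $\bP$, for each fixed $t$, $\bE[\bar f(\tau_t\o)]=\bE[\bar f(\o)]$, hence $\bE\big[t^{-1}(\bar f(\tau_t\o)-\bar f(\o))\big]=0$ for every $t>0$. The conclusion $\bE\nabla\bar f=0$ then follows by passing to the limit $t\downarrow0$ inside the expectation, which requires a domination/uniform-integrability hypothesis — presumably the regularity assumptions in force (uniform bounds on derivatives, or the finite-second-moment condition) supply this, so I would invoke dominated convergence. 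Alternatively, and more robustly, one writes $\bar f(\tau_t\o)-\bar f(\o)=\int_0^t \nabla\bar f(\tau_s\o)\,ds$ (the function $s\mapsto\bar f(\tau_s\o)$ being ${\rm C}^1$ along almost every orbit), takes expectations, uses Fubini and $\tau$-invariance to get $t\,\bE\nabla\bar f=\bE[\bar f(\tau_t\o)]-\bE[\bar f(\o)]=0$, and divides by $t$. This second route is cleanest and avoids any limiting subtlety.

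The main obstacle is the continuity (and ${\rm C}^1$-regularity) of the newly produced functions $\bar g$, $\bar h$ in (P.\ref{item:1})--(P.\ref{item:2}): one must make sure the inverse/composition depends continuously on $\o$, which uses that $\Omega$ is a separable metric space and that $f$ is a genuine ${\rm C}^1$ diffeomorphism with the stated regularity. Everything else is a direct unwinding of the cocycle relation and of $\tau$-invariance of $\bP$.
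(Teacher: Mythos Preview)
Your proposal is correct, and for (P.\ref{item:1}) and (P.\ref{item:2}) it matches the paper's argument essentially verbatim: the paper also verifies the shift identity $f(q+a,p;\o)=(a,0)+f(q,p;\tau_a\o)$ and then checks that $(a,0)+g(q,p;\tau_a\o)$ is a right inverse, invoking uniqueness; it dismisses (P.\ref{item:2}) as trivial, which your cocycle bookkeeping spells out.

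For (P.\ref{item:3}) you take a slightly different route from the paper. The paper introduces a compactly supported mollifier $J$ with $\int J=1$, writes $\bE\nabla\bar f=\int J(x)\,\bE f_x(x,\o)\,{\rm d}x$, integrates by parts in $x$ to get $-\int J'(x)\,\bE f(x,\o)\,{\rm d}x$, and then uses that $\bE f(x,\o)=\bE\bar f$ is constant in $x$ together with $\int J'=0$. Your second route---apply the fundamental theorem of calculus along the orbit, take expectations, use Fubini and $\tau$-invariance to get $t\,\bE\nabla\bar f=0$---is the same idea (differentiate the constant $t\mapsto\bE\bar f(\tau_t\o)$) carried out without the mollifier. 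Both arguments need $\bar f,\nabla\bar f\in L^1(\bP)$ to justify the interchanges; neither needs ergodicity. The mollifier version has the mild advantage that the integration by parts is purely in the real variable $x$ against a smooth compactly supported $J$, so one does not have to worry about the a.s.\ validity of the orbitwise FTC; your version is more direct once that is granted. Your first route (dominated convergence on the difference quotient) is the least robust of the three, as you note, and is best replaced by either of the other two.
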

\begin{proof}
The proof of (P.\ref{item:2}) is trivial. 
 We only prove (P.\ref{item:1}) for a stationary lift $f(q,p,\o)$ because the case of $f(q,\o)$ is done 
 in the same way.  Assume that $f(q,p,\omega)$ is a $q$\--stationary lift so that for every $a\in\bR$,
$
f(q+a,p,\omega)=(a,0)+f(q,p,\tau_a\omega),
$
and write
$g(q,p,\omega)$ for its inverse. To show that $g(q,p,\omega)$ is a $q$\--stationary
lift it suffices to check that
$
g(q+a,p,\omega)=(a,0)+g(q,p,\tau_a\omega).
$
In order to do this, let us fix $a$ and write $\tilde{g} (q,p,\omega)$ for the right-hand side
$(a,0)+g(q,p,\tau_a\omega)$. Observe that 
since $f$ is a $q$\--stationary lift,
$
f(\tilde{g}(q,p,\omega),\omega)=(a,0)+f(g(q,p,\tau_a\omega),\tau_a\omega)  
=(a,0)+(q,p)=(q+a,p)$. By uniqueness, 
$
\tilde{g}(q,p,\omega)=g(q+a,p,\omega),
$
which concludes the proof of (P.\ref{item:2}). As for (P.\ref{item:3}), write $f(x,\o)=\bar f(\tau_x\o)$ and
observe that for any smooth $J \colon \bR\to\bR$ of compact support, with $\int_{\bR} J(x){\rm d}x=1$,
\begin{eqnarray}
\bE \nabla \bar f&=&\int_{\bR} J(x)\left(\bE   f_x(x,\o)\right)\ {\rm d}x \nonumber\\ 
&=&-\bE\int_{\bR} J'(x) f(x,\o)\ {\rm d}x  \nonumber \\
&=&-\left(\int_{\bR} J'(x)\ {\rm d}x\right)\left(\bE\bar f\right),
\end{eqnarray}
so $\bE \nabla \bar f=0$.  
\end{proof}

The proof of Theorem \ref{dec} draws on spectral  theory 
for random processes. To this end, let us recall the statement of the Spectral Theorem for random processes. The Spectral Theorem allows us to represent a random process in terms of an auxiliary process with randomly orthogonal increments. Such a representation reduces to a Fourier series expansion if the stationary process is periodic. In order to
 apply the Spectral Theorem to a stationary process $a(q)=\bar a(\tau_q\o)$,
one  follows the steps:
 \begin{itemize}
 \item[(i)]
 Assume that $a(q)$ is {\it {centered}} in the sense that $\mathbb{E}\bar a(\omega)=0$.
 We  define the {\it{correlation}}
 $
 R(z)=\mathbb{E}\bar a(\omega)\bar a(\tau_z\omega).
 $
 \item[(ii)]
There always exists a nonnegative measure $G$ such that
 $
 R(z)=\int_{-\infty}^{\infty} {\rm e}^{{\rm i}z \xi} \, G({\rm d}\xi).
 $
 \item[(iii)]
 One can construct an auxiliary  process $(Y(\xi):\xi\in\bR)$ or alternatively the random measure 
$Y({\rm d}\xi)=Y({\rm d}\xi,\o)$ 
that are related by
 $Y(I)=Y(b)-Y(a)$, where $I=[a,b]$.
 The process $Y$ has orthogonal increments in the following sense:
 \begin{eqnarray}
 I \cap J = \varnothing  \Longrightarrow \mathbb{E} Y(I)Y(J)=0 .\label{ortho}
 \end{eqnarray}
 The relationship between  the measure $G({\rm d}\xi)$  or its associated nondecreasing function $G(\xi)$ is given by
 $\mathbb{E}Y(I)^2=G(b)-G(a)=G(I)$. \end{itemize}
 The \emph{Spectral Theorem} (\cite{Doob})  
 says that  for any stationary process $a$ for which $\bE \bar a^2<\i$,
we may find a process $Y$ satisfying \eqref{ortho}  such that
 $
 \bar a(\tau_q\omega)=\int_{-\infty}^{\infty} {\rm e}^{{\rm i}q  \xi} Y({\rm d}\xi).
 $
 Note that 
 \begin{eqnarray}
 \mathbb{E}\bar a(\tau_q\omega)\bar a(\omega)
 =\mathbb{E}\int_{-\infty}^{\infty} {\rm e}^{{\rm i}q  \xi} \, Y({\rm d}\xi)
 \int_{-\infty}^{\infty}Y({\rm d}\xi') 
 = \int_{-\infty}^{\infty} {\rm e}^{{\rm i}q \xi} \, G({\rm d}\xi).
 \end{eqnarray}
 Also, $\bar a(\omega)=\int_{-\infty}^{\infty}Y({\rm d}\xi,\o),$ 
and the stationarity of $a(q,\o)$ means 
$$
 Y({\rm d}\xi,\tau_q\o)={\rm e}^{{\rm i}q  \xi}Y({\rm d}\xi,\o).
$$
For our application below, we will have a family of random maps $(a(q,t)\,\, | \,\, t\in[0,1])$ that varies smoothly with $t$. In this case we can guarantee that the associated measures $Y({\rm d}\xi,t)$ 
depend smoothly in $t$.

The main difficulties of the proof are due to the fact that the ``random and area\--preservation properties" do not integrate 
well, for instance when arguing about $t$\--dependent deformations 
which must preserve both properties. The proof  consists of four steps.
\\
\\
\emph{Proof of Theorem \ref{dec}}.  Write $x=(q,p)$. Since $F$ is random isotopic to the identity, there is  a path 
$
\cF=(F^t \,\, | \,\, t\in[0,1])
$
of diffeomorphisms that connects $F$ to the identity map, $F^t$ is a stationary lift for each $t \in [0,1]$,  we have the normalization
$
\frac 12\int_{-1}^{1}\bE \det({\rm d} F^t){\rm d}p=1
$
for every $t\in[0,1]$, and $F^t$ is regular for a constant independent of $t$. 
There are four steps to the proof:

 \paragraph{\emph{Step} 1} (\emph{General strategy to turn $\mathcal{F}$ into a path of area\--preserving random twists}).  Write
 $
 \rho^t(x)=\rho^t(q,p)=\bar\rho^t(\tau_q\o,p)={\rm det}({\rm d}F^t(x)),
 $
 so that $(F^t)^*\ {\rm d}x=\rho^t\ {\rm d}x$, where ${\rm d}x={\rm d}q \wedge {\rm d}p$,
 and by assumption,
$$
\frac 12\int_{-1}^{1}\bE \rho^t{\rm d}p=\frac 12\int_{-1}^{1}\bE \bar\rho^t{\rm d}p=1, \,\,\,\rho^0=\rho^1=1.$$ 
Since $F^t$ is regular uniformly on $t$, the function $\rho^t$ is bounded and bounded away from $0$
by a constant that is independent of $t$. That is,  there exists a constant $C_0>0$ such that 
$
C_0^{-1}\leq \rho^t(x;\o)\leq C_0,
$
 almost surely.
We now construct, out of $F^t$, an area\--preserving path $\Lambda^t$ which is a stationary lift for every $t$.
We achieve this by using Moser's deformation trick, namely 
 we construct a path $G^t$ such that $\L^t=F^t \circ G^t$ is an
 area\--preserving stationary lift for all $t$. As it will be clear from the construction of $G^t$ below,
$G^0$ and $G^1$ are both the identity and, as a result, $\L^t$ is a path of area\--preserving maps that connects $F$ to identity.
We need $(G^t)^*(\rho^t {\rm d}x)={\rm d}x,$ and $G^t$ is constructed
as a $1$-flow map of a vector field $\cX(x,\theta)=\cX(x,\theta;t)$.
  So  we wish to find some vector field 
  $\cX$ such that $G^t=\phi^1$ where $\phi^{\theta}$, $\theta \in [0,1]$, denotes the flow
of  $\cX$. In fact, we also have to make sure that  the vector field $\pm \cX$ is parallel to the $q$-axis
 at $p=\pm 1$. This  guarantees that the strip $\cS$ is invariant under the flow of $\cX$. 
 
 Let 
 $
 m(\theta,x):=\theta\rho^t(x)+(1-\theta),
 $
 so that $m(\th,x)\ {\rm d}x$ is connecting the area form ${\rm d}x$ to $\rho^t \ {\rm d}x$. We need to 
 find a vector field $\cX$ such that its flow $\phi^\th$ satisfies 
$
(\phi^\th)^*{\rm d}x=m(\th,x)\ {\rm d}x.
$
Equivalently, $m$ must satisfies the Liouville's  equation
 \begin{eqnarray} \label{equation}
 m_\th+\nabla \cdot (\mathcal{X} m)=\rho^t-1+\nabla \cdot (\mathcal{X} m)=0.
 \end{eqnarray}
The strategy to solve equation (\ref{equation}) for $\cX$  is as follows. Search for a solution $\cX$ such that
$m\cX=\nabla_xu$ is a gradient. Of course we insist that $u$ is $q$\--stationary so that $\cX$ is also
$q$-stationary; 
\begin{eqnarray}
u(q,p,\th)&=&\bar u(\tau_q\o,p,\th), \nonumber\\
 (m\cX)(q,p,\th)&=&(m\cX)(q,p,\th;\o)=(\bar m\bar\cX)(\tau_q\o,p,\th) 
=(\bar u_\o(\tau_q\o,p,\th), \bar u_p(\tau_q\o,p,\th)). \nonumber
\end{eqnarray}
  Since $t$ is fixed, we  drop $t$ from our notations and write $\rho^t=\rho$. 
The equation (\ref{equation}) in terms of $u$ is an elliptic partial differential equation of the form
 \begin{eqnarray} \label{equation2}
\D u=1-\rho=: \eta,
 \end{eqnarray}
with $\eta(q,p)=\bar\eta(\tau_q\o,p)$ and $\int_{-1}^{1}\bE \eta(\o,p){\rm d}p=0.$
This concludes Step 1.

 \paragraph{\emph{Step} 2}  (\emph{Applying Spectral Theorem to solve} (\ref{equation2})).   
To apply the Spectral Theorem for each $p$, set $\hat \eta(\o,p)=\bar\eta(\o,p)-k(p)$ for 
$k(p)=\bE \bar\eta(\o,p)$, and write
  \[
  R(q;p):=\mathbb{E}\hat \eta(\omega,p)\hat \eta(\tau_q\omega,p)
=\int_{-\infty}^{\infty}   {\rm e}^{{\rm i}qz} \, G({\rm d}z,p).
\]
Note that $\bE \hat \eta(\o,p)=0$ for every $p$ and $\int_{-1}^1k(p){\rm d}p=0.$
 We have the representation
 \begin{eqnarray} \label{eq:m}
 \eta(q,p)=k(p)+\bar \eta(\tau_q\omega,p)=k(p)+\int_{-\infty}^{\infty} {\rm e}^{{\rm i}qz} \, Y({\rm d}z,p),
 \end{eqnarray}
 where $Y({\rm d}z,p)=Y({\rm d}z,p;\o)$ satisfies 
\begin{equation}\label{eq3.6}
Y({\rm d}z,p;\tau_q\o)={\rm e}^{{\rm i}qz}Y({\rm d}z,p;\o). 
\end{equation}
We want to find a solution to the partial differential equation
$\Delta u(q,p)=\eta(q,p)$, which is still stationary in the $q$ variable. First choose $h_0(p)$ such that $h_0''(p)=k(p)$
and satisfy the boundary conditions 
\begin{eqnarray} \label{bc}
h_0(\pm 1)=0. 
\end{eqnarray}
We 
write $u=h_0+v$ and search for a random $v$ satisfying
 \begin{equation} \label{eq:star2}
 \Delta v(q,p)=\hat \eta(q,p):=\hat\eta(\tau_q\o,p). \nonumber 
 \end{equation}
 Since $\g(q,p)={\rm e}^{({\rm i}q\pm p)z}$ is harmonic for each $z\in\bR$, the function $h$ given by
 \begin{eqnarray} \label{eq:c1}
h(q,p):=\int_{-\infty}^{\infty}{\rm e}^{{\rm i}qz}\, 
\big( {\rm e}^{zp}\, \Gamma_1({\rm d}z)+ {\rm e}^{-zp}\, \Gamma_2({\rm d}z)   \big),
 \end{eqnarray}
is harmonic for any measures $\G_1$ and $\G_2$. We will find a solution of the form $v=w+h$ where
$\Delta w={\eta}$ and $h$ will be selected to satisfy the boundary conditions $v_p(q,\pm 1)=0$. 
Indeed $w$  given by
  \begin{eqnarray} 
 w(q,p)&:=& \,\, \int_{-1}^p \int_{-\infty}^{\infty} \frac{{\rm e}^{{\rm i}qz}}{z} 
\sinh ((p-a)z) Y({\rm d}z,a) {\rm d}a   \nonumber \\ 
 &=&\int_{-1}^p \int_{-\infty}^{\infty}  
{\rm e}^{{\rm i}qz} \, \frac{{\rm e}^{(p-a)z} - {\rm e}^{(a-p)z}}{2z} Y({\rm d}{z},a) {\rm d}a,\nonumber
  \end{eqnarray}
satisfies all of the required properties. In order to verify this observe that
 \begin{align*}
w_{qq}(q,p)&=-\frac 12 \int_{-1}^p\int_{-\infty}^{\infty} z{{\rm e}^{{\rm i}qz}} 
 \left( {{\rm e}^{(p-a)z} - {\rm e}^{(a-p)z}}\right)Y({\rm d}z,a) {\rm d}a,\\
w_p(q,p)&=\frac 12\int_{-1}^p\int_{-\infty}^{\infty}  {\rm e}^{{\rm i}qz} \,
\left( {{\rm e}^{(p-a)z} + {\rm e}^{(a-p)z}}\right) Y({\rm d}{z},a) {\rm d}a,\\
w_{pp}(q,p)&=\frac 12\int_{-1}^p\int_{-\infty}^{\infty}  z{\rm e}^{{\rm i}qz} \,
\left( {{\rm e}^{(p-a)z} - {\rm e}^{(a-p)z}}\right) Y({\rm d}{z},a) da+\hat {\eta}(q,p).
\end{align*}
This clearly implies that $\Delta w={\eta}$.

 On the other hand, the process $w$ is $q$\--stationary. In other words 
$
w(q,p)=w(q,p;\o)=\bar w(\tau_q\omega,p),
$
 for a process $\bar w$. This can be verified by checking that $w(q+b,p;\o)=w(q,p;\tau_b\o),$ which is an immediate consequence of (\ref{eq3.6}):
\begin{eqnarray}
 w(q+b,p;\o)
 = \int_{-\infty}^{\infty} \int_{-1}^p \frac{{\rm e}^{{\rm i}qz}}{z} 
 \sinh ((p-a)z) {\rm e}^{{\rm i}bz}Y({\rm d}z,a;\o) {\rm d}a  
 =w(q,p;\tau_b\o). \nonumber
\end{eqnarray}
This concludes Step 2.

\paragraph{\emph{Step} 3} (\emph{Checking that $\Gamma_1$ and $\Gamma_2$ in {\rm (\ref{eq:c1})} 
 can be chosen to satisfy the boundary conditions \eqref{bc}}). At $p=\pm 1$, $\pm\nabla u$ should
 point in the direction of the $q$\--axis, we need to have that
 $$
  u_p(q,\pm 1)=v_p(q,\pm 1)=0,
 $$
because $h_0(\pm 1)=0$. First, the condition $v_p(q,1)=0$, means
\begin{eqnarray}
\int_{-\infty}^{\infty} {\rm e}^{{\rm i}qz} z ({\rm e}^{ z} \Gamma_1({\rm d}z) 
- {\rm e}^{- z}\Gamma_2({\rm d}z))   \,\,\,\,\,\,\,\,\,\,\,\,\,\,\,\, \,\,\,\,\,\,\,\,\,\,\,\,\,\,\,\,
\end{eqnarray}
\begin{eqnarray}
\,\,\,\,\,\,\,\,\,\,\,\,\,\,\,\, \,\,\,\,\,\,\,\,\,\,\,\,\,\,\,\,+\,\frac 12\int_{-1}^1\int_{-\infty}^{\infty}{\rm e}^{{\rm i}qz}({\rm e}^{(1-a)z}+{\rm e}^{(a-1)z}) Y({\rm d}z,a){\rm d}a =0,
\nonumber 
\end{eqnarray}
 and the condition $v_p(q,-1)=0$, means
 $
\int_{-\infty}^{\infty} {\rm e}^{{\rm i}qz} z ({\rm e}^{-z} \Gamma_1({\rm d}z) 
- {\rm e}^{z}\Gamma_2({\rm d}z))=0.
$
 Since we need to verify the above conditions for all $q$,
we must have that $\Gamma_1={\rm e}^{2z}\G_2,$  and 
 $
 z{\rm e}^z({\rm e}^{2z}-{\rm e}^{-2z})\Gamma_2({\rm d}z)+Y'({\rm d}z)\,=\,0,
$
   where
 $
Y'({\rm d}z)=\frac 12 \int_{-1}^1 ({\rm e}^{(1-a)z}+{\rm e}^{(a-1)z})Y({\rm d}z,a){\rm d}a.
 $
 In summary,
 \begin{equation}\label{eq:thegammas}
  \Gamma_2({\rm d}z)=-z^{-1}{\rm e}^{-z}({\rm e}^{2z}-{\rm e}^{-2z})^{-1}Y'({\rm d}z),\ \ \ \Gamma_1
  ={\rm e}^{2z}\G_2.
\end{equation}   
 Since $Y$ satisfies (\ref{eq3.6}), the same property holds true for both $\G_1$ and $\G_2$. From this 
 it follows  that the process $h$ (and hence $u$) is $q$\--stationary; this is proven in the same way we established the stationarity of $w$. The $q$-stationarity of $u$ implies that $\cX$ is $q$\--stationary. This in turn implies that the flow $\phi^\th$ is a $q$\--stationary lift for each $\th$. To see this, observe that since both
$\phi^{\th}(q+a,p;\o)$ and $(a,0)+\phi^{\th}(q,p;\tau_a\o)$ satisfy the ordinary differential equation
  $y'(\th)=\mathcal{X}(y(\th),\th;\omega)$
 for the same initial data $(q+a,p)$, we deduce 
$
 \phi^{\th}(q+a,p;\o)= (a,0)+\phi^{\th}(q,p;\tau_a\o),
$
which concludes this step.

\paragraph{\emph{Step} 4} (\emph{Producing a twist decomposition for $F$ from the path $\L$}). 
We claim that there exists  a $q$\--stationary process
$
H(q,p,t;\omega)=\bar H (\tau_q\omega,p,t)
$
such that 
$$
\frac{{\rm d}  \L^t}{{\rm d}t}=J\, \nabla H \circ \L^t
$$
holds. Indeed, since $\L^t$ is a $q$\--stationary lift, 
$\frac{{\rm d} \L^t}{{\rm d}t}$ is $q$\--stationary. Hence
by Proposition \ref{prop:lifts}, the composite
$
\frac{{\rm d}}{{\rm d}t} \L^t \circ (\L^t)^{-1}
$
is $q$\--stationary. Set
$$
A(t,q,p;\omega)=\frac{{\rm d}\L^t }{{\rm d}t} \circ(\L^t)^{-1}(q,p,\omega).
$$
We need to express $A$ as $J \,\nabla H$. Observe that since $\L^t$ is area
preserving, $A$ is divergence free. Write
$
A(t,q,p;\omega)=(a(\tau_q\omega,p,t),b(\tau_q\omega,p,t)).
$
We have $a_{\omega}+b_p=0.$
Set
$$
H(q,p,t;\omega)=\int_0^p a(\tau_q\omega,p',t) {\rm d} p'-b(\tau_q\omega,0,t).
$$
Clearly $H_q=-b$, $H_p=a$, and $H$ is stationary. Note that since 
$ \frac{{\rm d}\L^t }{{\rm d}t}$ and $(\L^t)^{-1}$
are bounded in ${\rm C}^1$,  $A$ is bounded in ${\rm C}^1$. Let us write $(\L^{s,t}\,\, | \,\, s\le t)$ for the flow of the vector field $A$ so that $\L^{0,t}=\L^t$ and $\L^{s,s}={\rm id}$.
On the other hand
$
\frac{{\rm d}}{{\rm d}t} \L^{s,t}=A \circ \L^{s,t},
$
implies that 
$$
\frac{{\rm d}}{{\rm d}t} {\rm D}\L^{s,t}={\rm D} A \circ \L^{s,t}\ {\rm D}\L^{s,t}.
$$
Hence there are constants $c_0,\,c_1$ such that
$
 \|{\rm D}\L^{s,t}\|\leq {\rm e}^{c_0(t-s)}$ and
$\|{\rm D}\L^{s,t}-{\rm id}\|\leq c_1(t-s)$.
It follows that
$
\|  \L^{s,t}-{\rm id}     \|_{{\rm C}^1} \leq c_2 \, (t-s),
$
for a constant $c_2$. So we may write
$
F=\L^1= \psi^1 \circ \psi^2 \circ \ldots \circ \psi^n \,\,\, \, \,\,\,\, \textup{with}\,\,\,\,\,\,
\psi^j=\L^{\frac{jt}{n},\frac{(j-1)t}{n}}
$
satisfying
$
\| \psi^j-{\rm id} \|  \leq  c_2 \,n^{-1}.
$
Hence, for large $n$, we can arrange
$
\max_{1 \leq j \leq n} \| \psi^j-{\rm id}\|_{{\rm C}^1} \, \leq \, \delta.
$
Let $\varphi^0(q,p)=(q+p,p)$. Then
$$
\| \psi^j \circ \varphi^0-\varphi^0\|_{{\rm C}^1} \leq \delta.
$$
The map $\varphi^0$ is a positive monotone twist map and we can readily show that 
$\psi^j \circ \varphi^0$ is positive monotone twist if $\d<1$. Hence 
$
\psi^j=\eta^j \circ (\varphi^0)^{-1}
$ 
where $\eta^j$
is a positive monotone twist and $(\varphi^0)^{-1}$ is a negative monotone twist. This concludes the proof of
Theorem~\ref{dec}.
\qed
  
 Next we give an application to random generating functions of complexity $N$. 
 For the following, recall the definition of
$
\bar {A} 
$
in {\rm (\ref{abar})}. 

\begin{lemma} \label{genf:def}
Let $F$ be a area\--preserving random twist map  of the form
$F=F_N\circ \ldots\circ F_0$, where each $F_i$ is a monotone area\--preserving random twist
with generating function of the form
$\cG^i(q,Q;\o):=\cL^i(\tau_q\o,Q-q).$ Then $F$ has a generalized generating function 
$
\mathcal{L} \colon \bar{A} \times \mathbb{R}^{N} \to \mathbb{R}
$
of complexity $N$, $\mathcal{L}(\o,v; \, \xi)$, that is given by
$
\mathcal{L}^0(\o,\, \xi_1)+ \sum_{j=1}^{N-1}
\mathcal{L}^j (\tau_{\xi_{j}}\o,\, \xi_{j+1}-\xi_{j})+
\mathcal{L}^N(\tau_{\xi_{N}}\o,\,v-\xi_{N})
$,
or equivalently
$
\mathcal{G}(q,\,Q; \, \xi)=\mathcal{G}^0(q,\, \xi_1)+ \sum_{j=1}^{N-1}\mathcal{G}^j (\xi_{j},\, \xi_{j+1})+
\mathcal{G}^N(\xi_{N},\,Q)$ 
where $\xi=(\xi_1,\ldots,\xi_{N}) \in \mathbb{R}^{N}$.
\end{lemma}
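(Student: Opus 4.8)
The plan is to verify directly that the function $\mathcal{L}$ (equivalently $\mathcal{G}$) written in the statement satisfies the defining property \eqref{key:for} of a generalized generating function of complexity $N$, using the hypothesis that each factor $F_i$ has the single-variable generating function $\mathcal{G}^i(q,Q;\o)=\mathcal{L}^i(\tau_q\o,Q-q)$. First I would record what it means for $\mathcal{G}^i$ to generate $F_i$: since $F_i$ is a monotone twist, the relation $P^i(q,p;\o)$, $Q^i(q,p;\o)$ is encoded by $F_i(q,-\mathcal{G}^i_q(q,Q;\o);\o)=(Q,\mathcal{G}^i_Q(q,Q;\o))$ — that is, if $p=-\mathcal{G}^i_q(q,Q)$ then the image of $(q,p)$ under $F_i$ is $(Q,\mathcal{G}^i_Q(q,Q))$. (For a monotone twist each $F_i$ does admit such a generating function in a single intermediate variable; this is the classical fact alluded to via \cite{Le91}, now in the stationary/random form, and I would either cite it or note it follows from the twist condition by solving $Q=Q^i(q,p)$ for $p$.)

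Next I would compute the partial derivatives of $\mathcal{G}(q,Q;\xi)=\mathcal{G}^0(q,\xi_1)+\sum_{j=1}^{N-1}\mathcal{G}^j(\xi_j,\xi_{j+1})+\mathcal{G}^N(\xi_N,Q)$. The key observation is that each intermediate variable $\xi_j$ appears in exactly two consecutive summands, $\mathcal{G}^{j-1}(\xi_{j-1},\xi_j)$ and $\mathcal{G}^j(\xi_j,\xi_{j+1})$, so the critical point equation $\mathcal{G}_{\xi_j}=0$ reads
\[
\partial_Q \mathcal{G}^{j-1}(\xi_{j-1},\xi_j)+\partial_q \mathcal{G}^j(\xi_j,\xi_{j+1})=0,
\]
with the convention $\xi_0:=q$, $\xi_{N+1}:=Q$. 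Setting $p_j:=-\partial_q\mathcal{G}^j(\xi_j,\xi_{j+1})=\partial_Q\mathcal{G}^{j-1}(\xi_{j-1},\xi_j)$, the generating-function property of $\mathcal{G}^{j-1}$ gives $F_{j-1}(\xi_{j-1},p_{j-1};\o)=(\xi_j,p_j)$ for every $j=1,\dots,N+1$ (reading the first equality as the "incoming momentum at $\xi_j$" and the second as the "outgoing momentum at $\xi_j$", which the critical equation forces to coincide). Chaining these identities from $j=1$ to $j=N+1$ yields
\[
(F_N\circ\cdots\circ F_0)(q,p_0;\o)=(Q,p_{N+1}),
\]
and since $p_0=-\partial_q\mathcal{G}^0(q,\xi_1)=-\mathcal{G}_q(q,Q;\xi)$ and $p_{N+1}=\partial_Q\mathcal{G}^N(\xi_N,Q)=\mathcal{G}_Q(q,Q;\xi)$, this is exactly \eqref{key:for} for $F=F_N\circ\cdots\circ F_0$.

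It then remains to check two bookkeeping points. First, the translation to the $\mathcal{L}$-form: I would verify that with $\mathcal{G}^j(q,Q;\o)=\mathcal{L}^j(\tau_q\o,Q-q)$ the telescoping sum for $\mathcal{G}$ becomes $\mathcal{G}(q,Q;\xi,\o)=\mathcal{L}(\tau_q\o,Q-q,\xi_1-q,\dots,\xi_N-q)$ with $\mathcal{L}$ as displayed — this is a direct substitution using $\tau_{\xi_j}\o=\tau_{\xi_j-q}(\tau_q\o)$ and matching the shift arguments. Second, the domain and regularity: $\mathcal{L}$ is $\mathrm{C}^1$ since each $\mathcal{L}^i$ is, and one should confirm the natural domain is $\bar A\times\mathbb{R}^N$, i.e. that the composite $F$ is itself an area-preserving random twist with the $\bar A$ determined by $\bar Q(\o,\pm1)$ — this is where I would invoke that a composition of monotone twists alternating in sign is again a twist (the relevant direction is already used in Theorems~\ref{dec} and \ref{epbt0}). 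The main obstacle is not conceptual but the careful handling of the stationary structure: ensuring that shifting base points $\tau_q\o\mapsto\tau_{\xi_j}\o$ is compatible across all $N+1$ factors so that the "same $\o$" threads through the whole chain, and that the generating functions $\mathcal{G}^i$ of the monotone factors genuinely exist as stated in the random setting. Once the single-factor generating-function identity is in hand, the multi-factor case is the standard composition/telescoping argument for generating functions, so I expect the writeup to be short.
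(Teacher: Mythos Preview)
Your proposal is correct and follows essentially the same approach as the paper: set $\xi_0=q$, $\xi_{N+1}=Q$, read $\mathcal{G}_{\xi_j}=0$ as the matching condition $\mathcal{G}^{j-1}_Q(\xi_{j-1},\xi_j)=-\mathcal{G}^j_q(\xi_j,\xi_{j+1})$, and chain the single-factor identities $F_j(\xi_j,-\mathcal{G}^j_q)=(\xi_{j+1},\mathcal{G}^j_Q)$ to obtain \eqref{key:for}. Your added bookkeeping on the $\mathcal{L}$-form and domain is more than the paper itself provides, which keeps its proof to a few lines.
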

\begin{proof} We write $\xi_0=q, \xi_{N+1}=Q$.
To verify \eqref{key:for}, observe that $\cG_\xi=0$ means that $\cG_Q^i(\xi_i,\xi_{i+1})=-\cG_q^{i+1}(\xi_{i+1},
\xi_{i+2})$ for $i=0,\dots, N-1$.
We have that
$F_i(q_i,\,p_i)=(Q_i(q_i,\,p_i),\,P_i(q_i,\,p_i))$,
with $ \mathcal{G}^i_Q(q_i,\,Q_i)=P_i$, $\mathcal{G}^i_q(q_i,\,Q_i)=-p_i$. By definition we have that
$F_0(q_1,-\mathcal{G}^0_q(q,\, \xi_1))=(\xi_1,\, 
\mathcal{G}^0_Q(q,\, \xi_1)).$
Since 
$ \mathcal{G}^0_Q(q,\, \xi_1)=- \mathcal{G}^1_q(\xi_1,\, \xi_2)$ 
we have  that
$F_1(\xi_1,\, -\mathcal{G}^1_q(\xi_1,\,\xi_2))=(\xi_2,\, 
 \mathcal{G}^1_Q(\xi_1,\, \xi_2)).$
Iterating
 $N$ times we get
$$F_N(\xi_{N},\, - \mathcal{G}^N_q(\xi_{N},\,Q))=
(Q,\,  \mathcal{G}^N_Q(\xi_{N},\,Q)), 
$$
so
$
F(q,\, - \mathcal{G}_q(q,\,Q;\, \xi))=(Q, \, \mathcal{G}_Q(q,\,Q;\, \xi)).
$
\end{proof}

\section{\textcolor{black}{Area\--preserving random monotone twists}}

This section proves a result which implies the $N=0$ case in Theorem~\ref{epbt0} (item (\ref{+1})). We also provide
complementary results on the density and spectral theoretic properties of the fixed points, and give
a method to construct monotone twists from a given smooth map.

\subsection{\textcolor{black}{Existence of random generating functions} } 

The map $v\mapsto \bar p(\o,v)$ is defined to be the inverse of the map $p\mapsto \bar Q(\o,p)$. This means that
$Q\mapsto p(q,Q)=\bar p(\tau_q\o,Q-q)$ is the inverse of  
$p\mapsto Q(q,p)=q+\bar Q(\tau_q\o,p).$
Note that the map $\bar p$ is defined on the set $\bar A$ so that $v\in [\bar Q(\o,-1),\bar Q(\o,1)]$.  
The following explicit description is needed in upcoming proofs.

\begin{prop} \label{easy:lem4}
Write $Q^{\pm}(\o)=\bar Q(\o,\pm 1)$ and set
\begin{eqnarray} \label{key2:for1} 
\mathcal{L}( \omega,v):=\int_{ Q^-(\omega)}^v
\bar P(\o,\bar p(\o,a))\ {\rm d}a- Q^-(\omega).
\end{eqnarray}
Then $\cL(\o,v)$ is a generating function of $F$ of complexity  $0$.
\end{prop}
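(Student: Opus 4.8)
The plan is to verify directly that the claimed $\cL(\o,v)$ satisfies the defining property \eqref{key:for} of a generating function of complexity $0$; since $N=0$, there is no $\xi$ variable, and we must check that $\cG_Q(q,Q) = -\cG_q(q,Q)$ never needs to hold as a constraint---rather, for \emph{every} $(q,Q)$ in the domain $A_\o$ (equivalently $(\o,v) \in \bar A$ with $v = Q-q$), the implication reads simply $F(q, -\cG_q(q,Q;\o)) = (Q, \cG_Q(q,Q;\o))$. Here $\cG(q,Q;\o) = \cL(\tau_q\o, Q-q)$. So everything reduces to computing the two partial derivatives $\cG_q$ and $\cG_Q$ from \eqref{key2:for1} and matching them against the twist map $F$ via the inverse function $\bar p$.

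First I would compute $\cG_Q$. Writing $v = Q-q$, the chain rule gives $\cG_Q(q,Q;\o) = \cL_v(\tau_q\o, v)$, and by the fundamental theorem of calculus applied to \eqref{key2:for1}, $\cL_v(\o,v) = \bar P(\o, \bar p(\o,v))$. Hence $\cG_Q(q,Q;\o) = \bar P(\tau_q\o, \bar p(\tau_q\o, Q-q)) = P(q, p(q,Q); \o)$, where $p(q,Q) = \bar p(\tau_q\o, Q-q)$ is by definition the inverse of $p \mapsto Q(q,p)$. Next I would compute $\cG_q$. Since $\cG(q,Q;\o) = \cL(\tau_q\o, Q-q)$, we get $\cG_q = \nabla_\o \cL \cdot (\text{from } \tau_q\o) - \cL_v(\tau_q\o, Q-q)$; more carefully, using the $\o$-differentiability notation of the paper, $\cG_q(q,Q;\o) = (\nabla \cL)(\tau_q\o, Q-q) - \cL_v(\tau_q\o, Q-q)$, where $\nabla$ acts in the first ($\Omega$) slot. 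The key point is then to identify $-\cG_q(q,Q;\o)$ with $p(q,Q) = \bar p(\tau_q\o, Q-q)$, i.e.\ to show $(\nabla\cL)(\o, v) = \cL_v(\o,v) - \bar p(\o, v)$. This is an identity purely about $\cL$ and should follow by differentiating \eqref{key2:for1} in the $\o$-direction: the term $-Q^-(\omega)$ contributes $-\nabla Q^-$, the lower limit $Q^-(\o)$ contributes $-\bar P(\o, \bar p(\o, Q^-(\o)))\,\nabla Q^-$ (but $\bar p(\o, Q^-(\o)) = -1$ and $\bar P(\o,-1) = -1$ by boundary invariance, so this is $+\nabla Q^-$), and the integrand contributes $\int_{Q^-}^v \nabla_\o[\bar P(\o,\bar p(\o,a))]\,{\rm d}a$. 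The $\nabla Q^-$ terms cancel, and the remaining integral must be shown to equal $\cL_v(\o,v) - \bar p(\o,v) = \bar P(\o,\bar p(\o,v)) - \bar p(\o,v)$ up to the right constant; this in turn should be a consequence of the area-preservation hypothesis on $F$, which forces a compatibility relation between the $\o$-derivative and $p$-derivative of $\bar P \circ \bar p$.

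The main obstacle, I expect, is precisely this last identity: extracting from ``$F(\cdot,\cdot;\o)$ is area-preserving'' the differential relation on $\cL$ that makes $-\cG_q = p(q,Q)$. In the classical (deterministic) theory this is the standard fact that $\cL$ defined as $\int P\,{\rm d}Q$ along the graph is a generating function precisely because the form $P\,{\rm d}Q - p\,{\rm d}q$ is closed on an area-preserving map's graph; here the role of ``$\partial/\partial q$ of a function of $q$'' is played by the operator $\nabla$ acting on $\Omega$, composed with $\tau_q$, and one must be careful that $\cL$ is only $\o$-differentiable in the weak sense of the paper's definition. Concretely I would: (i) verify $\bar p(\o, Q^\pm(\o)) = \pm 1$ and use boundary invariance $\bar P(\o, \pm 1) = \pm 1$ to kill boundary terms; (ii) show that area-preservation of $F$ is equivalent to $\partial_v[\bar P(\o,\bar p(\o,v))] = \nabla_\o[\bar p(\o,v)]$ plus a similar relation, i.e.\ the mixed-partials/closedness condition, written in stationary coordinates; (iii) integrate (ii) in $v$ from $Q^-(\o)$ to get the desired formula for $\nabla\cL$. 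Steps (i) and (iii) are routine once (ii) is in hand; step (ii) is the conceptual heart and is where the hypotheses (1) and (2) of Definition~\ref{def1} enter. Finally, since $F(q,p;\o)$ ranges over all of $\mathcal S$ and $\bar p$ is defined on all of $\bar A$, the implication \eqref{key:for} holds for every point, with the (vacuous) hypothesis $\cG_\xi = 0$ automatically satisfied, completing the verification that $\cL$ is a generating function of complexity $0$.
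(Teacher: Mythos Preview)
Your approach is correct and workable, but it differs from the paper's argument. The paper does not differentiate $\cL$ in the $\omega$-direction at all. Instead, it rewrites $\cG(q,Q)=\cL(\tau_q\o,Q-q)$ in a second, equivalent integral form: starting from the $\tilde Q$-integral
\[
\cG(q,Q)=\int_{q+Q^-(\tau_q\omega)}^{Q}\!\bigl(P(q,p(q,\tilde Q))+1\bigr)\,{\rm d}\tilde Q-(Q-q),
\]
it invokes area preservation (as an equality of areas of two regions in the strip, cf.\ Figure~\ref{Lfig1}) to convert this into a $\tilde q$-integral
\[
\cG(q,Q)=\int_{q}^{Q+q^-(\tau_Q\o)}\!p(\tilde q,Q)\,{\rm d}\tilde q+q^-(\tau_Q\o).
\]
The Fundamental Theorem of Calculus applied to the first expression gives $\cG_Q=P(q,p)$; applied to the second it gives $-\cG_q=p$. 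So both partial derivatives are read off directly from two integral representations of the \emph{same} function, with no $\nabla_\o$ ever appearing.

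Your route---differentiate under the integral in $\o$, then use the infinitesimal closedness relation coming from $\det({\rm D}F)=1$, then integrate in $v$---is the differential counterpart of the paper's integral argument, and it goes through. One correction: the precise identity you need in step~(ii) is not $\partial_v[\bar P(\o,\bar p(\o,v))]=\nabla_\o\bar p(\o,v)$ but rather
\[
\partial_v\bigl[\bar P(\o,\bar p(\o,v))\bigr]-\nabla_\o\bigl[\bar P(\o,\bar p(\o,v))\bigr]=\partial_v\bar p(\o,v),
\]
which is exactly the closedness of $P\,{\rm d}Q-p\,{\rm d}q$ rewritten in the stationary variables $(\o,v)$; with this in hand your steps (i)--(iii) complete the proof. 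The paper's integral-geometric version has the advantage of avoiding any $\o$-differentiation (hence any regularity issues with $\nabla_\o$), while your version makes the role of the symplectic condition $d(P\,{\rm d}Q-p\,{\rm d}q)=0$ more transparent.
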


\begin{proof}
We prove it if $F$ is positive monotone; the negative monotone case is similar. 
From (\ref{key2:for1}) we deduce that the corresponding $\cG(q,Q;\o)=\cL(\tau_q\o,Q-q)$ is
equal to $$\int_{q+ Q^-( \tau_q\omega)}^Q
P(q,\, p(q,\tilde{Q}))\, {\rm d} \tilde{Q} - Q^{-}( \tau_q\omega)$$
which is equal to
\begin{eqnarray} \label{key2:for} \nonumber
&&\int_{q+ Q^-( \tau_q\omega)}^Q
\left(P(q,\, p(q,\tilde{Q}))+1\right)\, {\rm d} \tilde{Q} -(Q-q)\nonumber \\
&=&
\int_q^{Q+q^{-}(\tau_Q\o)}\left(p(\tilde q,Q)+1\right){\rm d}\tilde q-(Q-q)\nonumber \\
&=&
\int_q^{Q+q^{-}(\tau_Q\o)}p(\tilde q,Q){\rm d}\tilde q+q^{-}(\tau_Q\o).
\end{eqnarray}
For the first equality in (\ref{key2:for}), we used  that $F$ is area\--preserving.
Here $F^{-1}(Q,P)=(q(Q,P),p(Q,P))$ and $q^{\pm}$ is defined by
$q(Q,\pm 1)=Q+q^{\pm}(\tau_Q\o)$ so that $Q\mapsto Q+q^{\pm}(\tau_Q\o)$
is the inverse of the map $q\mapsto q+Q^{\pm}(\tau_q\o)$.
Applying the Fundamental Theorem of Calculus to (\ref{key2:for}) we obtain 
that 
$
\mathcal{G}_Q(q,\,Q)=P(q,\,p)$ and $- \mathcal{G}_q(q,\,Q)=p$. 
Then (\ref{key:for}) follows.
\end{proof}

\begin{figure}[htbp]
\psfrag{F}{$F$}
\psfrag{A}{{\small $\mathcal{G}(q,\,Q)$}}
\psfrag{L1}{$\ell_{-}$}
\psfrag{L0}{$\ell_{+}$}
\psfrag{F-1}{$F^{-1}$}
\psfrag{q}{$q$}
\psfrag{Q}{$Q$}
\psfrag{q1}{$q(Q,1)$}
\psfrag{q0}{$q(Q,-1)$}
\psfrag{Q1}{$Q(q,1)$}
\psfrag{Q0}{$Q(q,-1)$}
  \begin{center}
    \includegraphics[width=11cm]{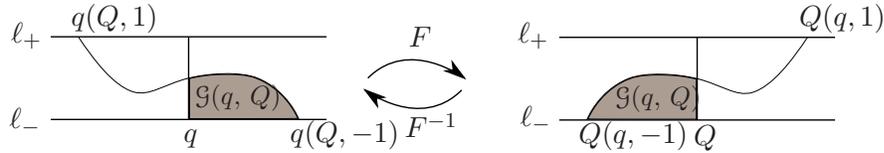}
   \caption{{\small Area\--preserving random twist
$F \colon \mathcal{S}  \to  \mathcal{S}$ 
and inverse. The
   area of the shaded regions is  
$\mathcal{G}(q,\,Q)$ in (\ref{key2:for}). }}
    \label{Lfig1}
  \end{center}
\end{figure}

\subsection{\textcolor{black}{Fixed points}}

 The following implies the $N=0$ statement in Theorem~\ref{epbt0}.

 \begin{prop} \label{prop:key}
 Let $ F \colon \mathcal{S} \times \Omega \to \mathcal{S}$ be an area\--preserving random montone twist with generating function  $\mathcal{L} \colon \bar{A} \to \mathbb{R}$. 
 Then $\psi \colon \bar{A}_0 \to \mathbb{R}$ given by
$\psi(a,\, \omega)=\bar\psi(\tau_a\o):=\mathcal{L}( \tau_a\omega,\,0)$
has infinitely many critical points. Furthermore,  
except for degenerate cases, $\psi$ has maximum and minimum critical points. 
In degenerate cases $\psi$ has a continuum of critical points. If  $\psi$ is bounded and non\--constant, it 
oscillates infinitely many times, so it has maximums and minimums.
\end{prop}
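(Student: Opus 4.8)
The plan is to pass to the scalar stationary process $g:=\nabla\bar\psi$ and exploit two features of it: it has zero mean, and recurrence of the flow $\tau$ forces the orbit $a\mapsto g(\tau_a\omega)$ to change sign infinitely often. First I would record the regularity set-up: a generating function is ${\rm C}^1$ by definition, so $\mathcal{L}$ is ${\rm C}^1$ and hence $\bar\psi=\mathcal{L}(\cdot,0)$ is $\omega$-differentiable with continuous derivative $g:=\mathcal{L}_\omega(\cdot,0)=\nabla\bar\psi$; consequently, for $\mathbb{P}$-almost every $\omega$ the map $a\mapsto\psi(a,\omega)=\bar\psi(\tau_a\omega)$ is ${\rm C}^1$ on all of $\mathbb{R}$, with $\psi'(a,\omega)=g(\tau_a\omega)$. (It is defined on all of $\mathbb{R}$ because the twist condition gives $\bar Q(\omega,-1)<0<\bar Q(\omega,1)$ almost surely, so for a.e.\ $\omega$ the point $0$ lies in the interior of $\bar A_{\tau_a\omega}$ for every $a$.) By Proposition~\ref{prop:lifts}\,(P.\ref{item:3}), $\mathbb{E}\,g=\mathbb{E}\,\nabla\bar\psi=0$.

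Next I would split into cases using the measurable, $\tau$-invariant event $D:=\{\omega:g(\tau_a\omega)=0\text{ for all }a\}=\{\omega:\psi(\cdot,\omega)\text{ is constant}\}$, which by ergodicity has probability $0$ or $1$. If $\mathbb{P}(D)=1$, this is the \emph{degenerate case}: for a.e.\ $\omega$ every point of $\mathbb{R}$ is a critical point of the constant function $\psi(\cdot,\omega)$, so there is a continuum of critical points. If $\mathbb{P}(D)=0$, then $g$ is not almost surely $0$ (otherwise $\psi(\cdot,\omega)$ would be constant for a.e.\ $\omega$, forcing $\mathbb{P}(D)=1$), and since $\mathbb{E}\,g=0$ neither $U_+:=\{g>0\}$ nor $U_-:=\{g<0\}$ can be $\mathbb{P}$-null. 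Applying the ergodic theorem to $\mathbf{1}_{U_+}$ and $\mathbf{1}_{U_-}$ along the flow $(\tau_a)$, for a.e.\ $\omega$ the averages $\frac{1}{T}\int_0^T\mathbf{1}_{U_\pm}(\tau_a\omega)\,da$ tend to $\mathbb{P}(U_\pm)>0$; in particular both $\{a\geq0:g(\tau_a\omega)>0\}$ and $\{a\geq0:g(\tau_a\omega)<0\}$ are unbounded.

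From here the conclusions follow by elementary calculus. I would fix $\omega$ in the full-measure non-degenerate set and pick $0\leq b_1<b_2<b_3<\cdots\to\infty$ with $\psi'(b_k,\omega)$ strictly alternating in sign (possible by the unboundedness just noted). By the intermediate value theorem, $\psi'(\cdot,\omega)$ vanishes somewhere in each $(b_k,b_{k+1})$, and these zeros are distinct, so $\psi(\cdot,\omega)$ has infinitely many critical points. On an interval $[b_k,b_{k+1}]$ across which $\psi'$ passes from $+$ to $-$, the maximum of $\psi(\cdot,\omega)$ over that interval is attained at an interior point — hence at a critical point that is a local maximum; on an interval where $\psi'$ passes from $-$ to $+$, the same argument applied to the minimum produces a critical point that is a local minimum. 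Both sign patterns recur infinitely often, so $\psi(\cdot,\omega)$ has infinitely many local maxima and infinitely many local minima; this is the assertion that, outside the degenerate case, $\psi$ has maximum and minimum critical points. Finally, if $\psi(\cdot,\omega)$ is bounded and non-constant then $\omega\notin D$, so $\psi'(\cdot,\omega)$ changes sign infinitely often: $\psi(\cdot,\omega)$ is nowhere eventually monotone, i.e.\ it oscillates infinitely many times, and the local extrema just exhibited are its maxima and minima. (Via Proposition~\ref{easy:lem3} with $N=0$, where $\mathcal{I}(\cdot;\omega)=\psi(\cdot,\omega)$, these critical points are fixed points of $F$, which gives the $N=0$ case of Theorem~\ref{epbt0}.)

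I expect the one genuinely delicate point to be the measurability and regularity bookkeeping of the first paragraph: checking, on a single full-measure set of $\omega$, that $a\mapsto\psi(a,\omega)$ is ${\rm C}^1$ on $\mathbb{R}$, that its derivative is exactly $a\mapsto\nabla\bar\psi(\tau_a\omega)$, and that $\nabla\bar\psi$ meets the integrability hypothesis needed for Proposition~\ref{prop:lifts}\,(P.\ref{item:3}) — which is where the finite-moment condition of Definition~\ref{def1} enters. Once that is settled, the ergodic-recurrence step and the intermediate value theorem are routine; a secondary point, again handled by ergodicity through the $\tau$-invariance of $D$, is that the degenerate/non-degenerate alternative is a genuine dichotomy, with almost every $\omega$ on the same side.
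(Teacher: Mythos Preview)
Your argument is correct, but it takes a genuinely different route from the paper's. The paper works directly with $\psi$ rather than its derivative: assuming $\psi(a,\omega)$ is eventually monotone, the (bounded) limit $\psi(\infty,\omega)$ exists and, by ergodicity, is a deterministic constant $c$; stationarity gives $\mathbb{E}\,J(\bar\psi)=\mathbb{E}\,J(\psi(a,\cdot))$ for every $a$ and every bounded continuous $J$, so letting $a\to\infty$ yields $\mathbb{E}\,J(\bar\psi)=J(c)$, forcing $\bar\psi\equiv c$. By contrast you differentiate, use $\mathbb{E}\,\nabla\bar\psi=0$ from Proposition~\ref{prop:lifts}(P.\ref{item:3}), and then apply the ergodic theorem to the indicators of $\{\nabla\bar\psi>0\}$ and $\{\nabla\bar\psi<0\}$ to produce infinitely many sign changes of $\psi'$. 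The paper's argument is shorter and needs only continuity and boundedness of $\psi$; yours needs the ${\rm C}^1$ structure and the integrability underlying (P.\ref{item:3}), but in return it is more constructive (it exhibits the alternating local maxima and minima directly) and does not rely on boundedness of $\psi$. Both are valid; your concern about the regularity bookkeeping is the right one, and once that is in place the rest is routine.
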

 
 \begin{proof}
We prove the last statement by contradiction. Suppose that $\psi(a,\, \omega)$ is monotone for large $a$. Then
 $
 \lim_{a \to \infty}\psi(a,\, \omega)=\psi(\infty,\, \omega)
 $ 
 is well\--defined. By
 ergodicity $\psi(\infty,\, \omega)=\psi(\infty)$ is independent of $\omega$. On 
the other hand, for any bounded continuous function $J \colon \mathbb{R} \to \mathbb{R}$ we have that
$\mathbb{E}\, J(\psi(a,\, \omega))=\mathbb{E}\, J(\bar\psi( \omega))$
for every $a$, and therefore $J(\psi(\infty))=\mathbb{E}\, J(\bar\psi( \omega)).$ 
Thus $\bar\psi( \omega)=\psi(\infty)$ a.s. In other words, if $\psi(a,\, \omega)$
doesn't oscillate, then $\psi(a,\, \omega)$ is constant.
 \end{proof}

\subsection{\textcolor{black}{Construction of random monotone twists and spectral nature of
fixed points}} \label{mono}

As we argued in Proposition~\ref{easy:lem4}, a monotone twist map may be determined in terms of its generating function. 
We now explain how we can start from a scalar-valued function $H(\o,v)$ and construct a monotone twist map from it.
To explain this construction, let us derive a useful property of generating functions.  Recall $Q^{\pm}(\o)=\bar Q(\o,\pm 1)$.

\begin{prop}\label{prop9.1} Let $\cL(\o,v)$ be as in Proposition~\ref{easy:lem4}. Then the function
\begin{equation}\label{eq9.1}
\cL(\o,Q^+(\o))-Q^+(\o),
\end{equation}
is constant and $\cL(\o,Q^-(\o))=Q^-(\o)$.
\end{prop}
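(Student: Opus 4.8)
The plan is to compute $\cL(\o,v)$ directly at the two endpoints $v=Q^-(\o)$ and $v=Q^+(\o)$ using the explicit formula \eqref{key2:for1}, namely
$$
\cL(\o,v)=\int_{Q^-(\o)}^v \bar P(\o,\bar p(\o,a))\,{\rm d}a - Q^-(\o),
$$
and then invoke the normalization/area-preservation built into Definition~\ref{idf} together with ergodicity to identify the constant. First I would observe that the value at the lower endpoint is immediate: setting $v=Q^-(\o)$ collapses the integral, giving $\cL(\o,Q^-(\o))=-Q^-(\o)$. This is \emph{almost} the claimed identity $\cL(\o,Q^-(\o))=Q^-(\o)$; the discrepancy of a sign suggests the proposition intends the normalization of $\cL$ up to an additive constant, or a different sign convention for $Q^-$, so I would first reconcile this with the formula \eqref{key2:for} by noting that in $\cG(q,Q)=\cL(\tau_q\o,Q-q)$ the additive ``$-Q^-$'' term is exactly what makes $\cG$ a genuine generating function (it does not affect $\cG_q,\cG_Q$), and the natural normalization of $\cL$ is pinned by requiring $\cL(\o,Q^-(\o))$ to equal the value consistent with the area interpretation in Figure~\ref{Lfig1}; with that normalization the lower-endpoint value is as stated.

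Next, for the upper endpoint I would change variables in the integral. Since $a\mapsto \bar p(\o,a)$ is the inverse of $p\mapsto \bar Q(\o,p)$, substituting $a=\bar Q(\o,p)$ with $da = \bar Q_p(\o,p)\,dp$ turns
$$
\int_{Q^-(\o)}^{Q^+(\o)} \bar P(\o,\bar p(\o,a))\,{\rm d}a
= \int_{-1}^{1} \bar P(\o,p)\,\bar Q_p(\o,p)\,{\rm d}p.
$$
Then I would express $\cL(\o,Q^+(\o))-Q^+(\o)$ in terms of this integral (up to the endpoint constants), so the whole claim reduces to showing that
$$
\Phi(\o):=\int_{-1}^{1}\bar P(\o,p)\,\bar Q_p(\o,p)\,{\rm d}p - Q^+(\o)-Q^-(\o)
$$
(or the appropriately signed combination) is $\bP$-almost surely constant in $\o$.

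To prove $\Phi$ is constant I would use ergodicity via the same device as in the proof of Proposition~\ref{prop:key}: $\Phi$ is a measurable function on $\Omega$ and I must show it is $\tau$-invariant, i.e. $\Phi(\tau_b\o)=\Phi(\o)$ for all $b$. The key point is area-preservation of $F$: the one-form $P\,{\rm d}Q - p\,{\rm d}q$ is closed, so its integral around the boundary of the region $A_\o$ in Figure~\ref{Lfig1} vanishes, and translating the fundamental domain by $b$ (which replaces $\o$ by $\tau_b\o$) changes the relevant boundary integral only by the stationarity relations $Q(q+b,p;\o)=b+Q(q,p;\tau_b\o)$, $P(q+b,p;\o)=P(q,p;\tau_b\o)$. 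Tracking these substitutions shows $\Phi(\tau_b\o)=\Phi(\o)$, hence by ergodicity $\Phi$ equals an $\o$-independent constant $\bP$-a.s., which is exactly the assertion that \eqref{eq9.1} is constant. The main obstacle I anticipate is bookkeeping the sign and the additive constants correctly — getting \eqref{eq9.1} and $\cL(\o,Q^-(\o))=Q^-(\o)$ to come out with precisely the signs claimed — rather than any conceptual difficulty; the ``constancy'' half is a clean ergodicity argument once area-preservation is used to see the relevant quantity is $\tau$-invariant.
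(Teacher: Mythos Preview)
Your approach is correct but genuinely different from the paper's. You identify the right quantity
\[
\Phi(\o)=\int_{-1}^{1}\bar P(\o,p)\,\bar Q_p(\o,p)\,{\rm d}p - Q^+(\o)-Q^-(\o)=\cL(\o,Q^+(\o))-Q^+(\o),
\]
and propose to show $\Phi(\tau_b\o)=\Phi(\o)$ by integrating the closed one-form $P\,{\rm d}Q-p\,{\rm d}q$ over the boundary of $[0,b]\times[-1,1]$; the side contributions give $\Phi(\tau_b\o)-\Phi(\o)$ and the top/bottom contributions give $-(Q^+(\tau_b\o)-Q^+(\o))-(Q^-(\tau_b\o)-Q^-(\o))$, already absorbed in $\Phi$. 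Stokes then yields $\tau$-invariance, and ergodicity gives constancy. This works once the bookkeeping is done.

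The paper instead differentiates directly: from $\bar F(\o,\cL_v-\cL_\o)=(v,\cL_v)$ and the boundary invariance $P=\pm 1\Leftrightarrow p=\pm 1$, one reads off $\cL_\o(\o,Q^\pm(\o))=0$ and $\cL_v(\o,Q^\pm(\o))=\pm 1$, so by the chain rule $\nabla_\o\big(\cL(\o,Q^\pm(\o))\mp Q^\pm(\o)\big)=0$, and ergodicity finishes. This is shorter and has the side benefit of isolating the endpoint relations $\cL_\o=0$, $\cL_v=\pm 1$, which are reused in the proof of Theorem~\ref{prop9.2}. Your Stokes argument is more geometric and makes the role of area-preservation explicit without passing through the generating-function identities; either route is fine.

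Your sign observation at the lower endpoint is correct: the explicit formula \eqref{key2:for1} gives $\cL(\o,Q^-(\o))=-Q^-(\o)$, and this is in fact what the paper's own proof derives in its last line. The statement of the proposition has a sign typo; your proposal should simply record $\cL(\o,Q^-(\o))=-Q^-(\o)$ rather than try to reconcile it.
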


\begin{proof} From
$
 F(q,-\mathcal{G}_q(q,Q;\o))=(Q,\mathcal{G}_Q(Q,q;\o)),
$
we deduce
$$
\bar F(\o,\cL_v(\o,v)-\cL_\o(\o,v))=(v,\cL_v(\o,v)).
$$
Since $P=\pm 1$ if and only if $p=\pm 1$, we obtain
$
\cL_\o(\o,Q^{\pm}(\o))=0$ and $\cL_v(\o,Q^{\pm}(\o))=\pm 1$.
But
$$
\nabla_\o\left(\cL(\o,Q^\pm(\o))\right)=\cL_\o(\o,Q^{\pm}(\o))+\cL_v(\o,Q^{\pm}(\o))Q^{\pm}_\o(\o)
=\pm Q^{\pm}_\o(\o),$$ 
which means that the function $\cL(\o,Q^\pm(\o))\mp Q^\pm(\o)$ is constant by the ergodicity of $\bP$.
On the other hand, by the definition of $\cL$  (see (\ref{key2:for1})) we know that $\cL(\o,Q^-(\o))=-Q^-(\o)$. 
\end{proof}

We are ready to give a recipe for constructing a monotone twist map from a ${\rm C}^2$ function
$
H:\O\times \bR\to\bR,
$
which satisfy the following conditions
\begin{align}\label{eq9.2}
\left\{ \begin{array}{rl}
&H(\o,0)=0,\ \ \ H(\o,a)>0 {\text{ for }}a>0,\\\\
&\eta(\o)=\inf\{a>0\,\, | \,\, H(\o,a)=2\}<+ \i,
          \end{array} \right .
\end{align}
almost surely. For such a function $H$, we set  $\s(\o)=\eta(\o)-\frac 12\int_0^{\eta(\o)} H(\o,a){\rm d}a$ and
\begin{eqnarray}
Q^-(\o)&=&-\s(\o),\ \ \ \ Q^{+}(\o)=(\eta-\s)(\o);\label{eq9.3}\\
\bar G(\o,v)&=&H(\o,v+\s(\o)),\ \ \ G(q,Q;\o)=\bar G(\tau_q\o,Q-q); \label{9.3'} \\
\cL(\o,v)&=&\int_0^{v+\s(\o)}H(\o,a){\rm d}a-v;\,\,\,\,\,\,
 \cG(q,Q;\o)=\cL(\tau_q\o,Q-q). \nonumber
\end{eqnarray}

\begin{theorem}\label{prop9.2} Assume that $H \colon \Omega \times \mathbb{R} \to \mathbb{R}$ 
satisfies \eqref{eq9.2} and the condition $G_q<0$ with $G$ defined as in
\eqref{9.3'}. Then there exists a unique monotone twist map $F$ such that 
$
F(q,-\mathcal{G}_q(q,Q))=(Q,\mathcal{G}_Q(q,Q)),
$
and $F(q,\pm 1)=(q+Q^{\pm}(\tau_q\o),\pm 1)$ with $Q^{\pm}$ defined by \eqref{eq9.3}.
Moreover, if $\bar q$ is a local maximum (respectively minimum) for $q\mapsto \psi(q)=\cG(q,q)$, then
${\rm D}F$ at the $F$-fixed point $(\bar q,-\mathcal{G}_q(\bar q,\bar q))$ has negative (respectively positive) eigenvalues.
\end{theorem}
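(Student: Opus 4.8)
The plan is to reverse the construction of Proposition~\ref{easy:lem4}: the datum $H$ already fixes $\cL$, and hence $\cG(q,Q;\o)=\cL(\tau_q\o,Q-q)$, through \eqref{eq9.3}, so it only remains to produce the map $F$ satisfying \eqref{key:for} and to verify that it is a monotone twist. The one identity I would use throughout comes for free from \eqref{eq9.3}: differentiating $\cL(\o,v)=\int_0^{v+\s(\o)}H(\o,a)\,{\rm d}a-v$ gives $\cL_v(\o,v)=\bar G(\o,v)-1$, hence $\cG_Q(q,Q;\o)=G(q,Q;\o)-1$ and $\cG_{qQ}=G_q$, so the hypothesis $G_q<0$ is exactly the Legendre (twist) inequality $\cG_{qQ}<0$.

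\emph{Construction and uniqueness.} First I would record, just as in the proof of Proposition~\ref{prop9.1} — where $\s(\o)=\eta(\o)-\frac12\int_0^{\eta(\o)}H\,{\rm d}a$ forces $\nabla\s(\o)=-\frac12\int_0^{\eta(\o)}(\nabla_\o H)(\o,a)\,{\rm d}a$ — that $\cL_v(\o,Q^\pm(\o))=\pm1$, $\cL_\o(\o,Q^\pm(\o))=0$, and that $0<\frac12\int_0^{\eta}H\,{\rm d}a<\eta$ (because $0<H<2$ on $(0,\eta)$, $\eta$ being the first time $H$ reaches $2$), which yields the boundary-twisting signs $Q^-(\o)=-\s(\o)<0<\eta(\o)-\s(\o)=Q^+(\o)$. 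Then, for fixed $q,\o$, consider $Q\mapsto-\cG_q(q,Q)=\cL_v(\tau_q\o,Q-q)-\cL_\o(\tau_q\o,Q-q)$ on $[q+Q^-(\tau_q\o),\,q+Q^+(\tau_q\o)]$: its $Q$-derivative is $-\cG_{qQ}=-G_q>0$, and by the boundary identities its endpoint values are $-1$ and $+1$, so it is an increasing $C^1$ bijection onto $[-1,1]$. Letting $p\mapsto Q(q,p)$ be its inverse and setting $F(q,p;\o):=(Q(q,p),\,\cG_Q(q,Q(q,p)))$, relation \eqref{key:for} holds by construction, $F(q,\pm1)=(q+Q^\pm(\tau_q\o),\pm1)$ and $P(q,\pm1)=\pm1$ follow from the endpoint values, the implicit function theorem makes $F$ of class $C^1$, and the standard generating-function identity $P\,{\rm d}Q-p\,{\rm d}q={\rm d}\cG$ on the graph of $F$ gives $\det{\rm D}F\equiv1$, so $F(\cdot,\cdot;\o)$ is an area-preserving diffeomorphism of $\cS$. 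Replacing $\o$ by $\tau_a\o$ in the defining relations and using $\cG(q+a,Q+a;\o)=\cL(\tau_{q+a}\o,Q-q)$ gives $Q(q+a,p;\o)=a+Q(q,p;\tau_a\o)$, so $F$ is a $q$-stationary lift with $\bar Q(\o,p)=Q(0,p;\o)$, $\bar P(\o,p)=P(0,p;\o)$; the map $p\mapsto\bar Q(\o,p)$ is increasing because $\partial p/\partial\bar Q=-G_q>0$, and the finite second moment in Definition~\ref{def1}(4) reduces to integrability of $\eta$. Uniqueness is immediate: any monotone twist satisfying \eqref{key:for} and the stated boundary formula must, at each $(q,p)$, solve $p=-\cG_q(q,Q')$ with $Q'\in[q+Q^-(\tau_q\o),q+Q^+(\tau_q\o)]$, and strict monotonicity of $-\cG_q(q,\cdot)$ forces $Q'=Q(q,p)$ and then $P'=\cG_Q(q,Q')$.

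\emph{The spectral statement.} For $N=0$ Proposition~\ref{easy:lem3} reduces to: a critical point $\bar q$ of $\psi(q)=\cG(q,q;\o)$ yields the fixed point $\vec x=(\bar q,-\cG_q(\bar q,\bar q))$; since $Q(\bar q,\cdot)=\bar q$, every partial of $\cG$ below is evaluated at $(\bar q,\bar q)$. Implicit differentiation of $p=-\cG_q(q,Q)$ and $P=\cG_Q(q,Q)$ gives
$$
{\rm D}F(\vec x)=\frac{1}{\cG_{qQ}}\begin{pmatrix}-\cG_{qq}&-1\\ \cG_{qQ}^{2}-\cG_{qq}\cG_{QQ}&-\cG_{QQ}\end{pmatrix},\qquad\det{\rm D}F(\vec x)=1,
$$
and, using $\psi''(\bar q)=\cG_{qq}+2\cG_{qQ}+\cG_{QQ}$,
$$
{\rm tr}\,{\rm D}F(\vec x)=-\frac{\cG_{qq}+\cG_{QQ}}{\cG_{qQ}}=2-\frac{\psi''(\bar q)}{\cG_{qQ}(\bar q,\bar q)}.
$$
Since $\cG_{qQ}=G_q<0$: at a local minimum $\psi''(\bar q)>0$, so ${\rm tr}\,{\rm D}F(\vec x)>2$ and, with $\det{\rm D}F(\vec x)=1$, both eigenvalues are real and positive; at a local maximum $\psi''(\bar q)<0$, so ${\rm tr}\,{\rm D}F(\vec x)<2$, from which the negative-type conclusion is read off.

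I expect the principal obstacle to be not the linear algebra at the fixed point but the bookkeeping needed to confirm that the $F$ manufactured from $H$ genuinely meets every clause of Definition~\ref{def1} together with monotonicity — above all, that $Q\mapsto-\cG_q(q,Q)$ sweeps out exactly $[-1,1]$, which is where the identities $\cL_v(\o,Q^\pm(\o))=\pm1$ and $\cL_\o(\o,Q^\pm(\o))=0$ must be established (and hence where the cancellation of the $\nabla_\o H$ and $\nabla\s$ terms has to be checked carefully), while keeping straight which variable is held fixed under the $\o$-shift throughout.
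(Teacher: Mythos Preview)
Your proposal is correct and follows essentially the same route as the paper: you use $\cG_{qQ}=G_q<0$ to invert $Q\mapsto -\cG_q(q,Q)$, verify the boundary identities $\cL_v(\o,Q^\pm(\o))=\pm1$ and $\cL_\o(\o,Q^\pm(\o))=0$ exactly as the paper does in its equation \eqref{eq9.5}, compute the same matrix \eqref{eq9.4} for ${\rm D}F$, and relate ${\rm tr}\,{\rm D}F-2$ to $\psi''/(-\cG_{qQ})$. Your treatment is slightly more explicit about stationarity and uniqueness, and you invoke $P\,{\rm d}Q-p\,{\rm d}q={\rm d}\cG$ for area preservation where the paper reads $\det{\rm D}F=1$ off the matrix, but these are cosmetic differences; note also that your closing line ``from which the negative-type conclusion is read off'' mirrors the paper's own ``the case of negative eigenvalues may be treated in the same way,'' and carries the same implicit caveat that ${\rm tr}<2$ alone does not force ${\rm tr}\le -2$.
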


\begin{proof} 
By the definition, 
$$
\cG(q,Q)=\int_{q+Q^-(\tau_q\o)}^QG(q,Q')\ {\rm d}Q'-(Q-q),
$$
which implies
\begin{eqnarray} \label{that}
\mathcal{G}_Q=G-1,\ \ \ \cG_{Qq}=G_q<0.
\end{eqnarray}
From \eqref{that} we learn that the map $Q\mapsto \mathcal{G}_q(q,Q)$ is decreasing and, as a result,
the equation 
\begin{eqnarray}\label{cal}
\mathcal{G}_q(q,Q)=-p
\end{eqnarray}
may be solved for $Q$, to yield a $p$\--increasing function $Q=Q(q,p)$. We set 
$
P(q,p)=\mathcal{G}_Q(q,Q(q,p))=G(q,Q(q,p))-1,
$ 
so that
$F(q,p)=(Q(q,p),P(q,p))$. Note that the monotonicity condition is satisfied 
because $Q$ is increasing in $p$. 
We need to show that the boundary conditions are satisfied and that $F$ is area-preserving. For the latter,
observe that by differentiating both sides of the relationship \eqref{cal}, we obtain
$\cG_{qq}+\cG_{Qq}Q_q=0$, $\cG_{qQ}Q_p=-1$, 
$P_q=\cG_{Qq}+\cG_{QQ}Q_q$, and $P_p=\cG_{QQ}Q_p$.
It follows that
\begin{equation}\label{eq9.4}
 {\rm D}F=-\cG_{Qq}^{-1}
\begin{bmatrix}
{\cG_{qq}} & {1} \\
{\cG_{qq}\cG_{QQ}-\cG_{qQ}^2}& \cG_{QQ}
\end{bmatrix} .
\end{equation}
It follows from \eqref{eq9.4}  that if the eigenvalues of ${\rm D}F$ are $\l$ and $\l^{-1}$, then $\l>0$ if and only if
$
{\rm Trace}({\rm D}F)=\frac {\cG_{qq}+\cG_{QQ}}{-\cG_{qQ}}=\l+\l^{-1}\geq 2.
$
Equivalently ${\rm D}F$ has positive eigenvalues if and only if
$
\psi''(q)=(\cG_{qq}+\cG_{QQ}+2\cG_{qQ})(q,q)>0.
$
The case of negative eigenvalues may be treated in the same way.

For the boundary conditions, we first establish
\begin{equation}\label{eq9.5}
\cL_\o(\o,Q^{\pm}(\o))=0,\ \ \ \cL_v(\o,Q^{\pm}(\o))=\pm 1. 
\end{equation}
For the second equality in \eqref{eq9.5}, observe that $\cL_v=\bar G-1$, and by definition
$\bar G(\o,Q^-(\o))=H(\o,0)=0$, and 
$ \bar G(\o,Q^+(\o))=H(\o,Q^+(\o)-Q^-(\o))=H(\o,\eta(\o))=2$. As for the first
 equality in \eqref{eq9.5}, observe that by the definition of $\s$, $G$ and $\cL$,
\begin{align*}
\cL(\o,Q^{-}(\o))+Q^{-}(\o)&=0,\\
\cL(\o,Q^{+}(\o))-Q^{+}(\o)&=\int_0^{Q^+(\o)+\s(\o)}H(\o,a){\rm d}a-2Q^+(\o)\\
&=
\int_0^{\eta(\o)}H(\o,a){\rm d}a-2(\eta-\s)(\o)=0.
\end{align*}
As a result 
\begin{eqnarray} \label{x:eq}
\cL(\o,Q^{\pm}(\o))\mp Q^{\pm}(\o)=0.
\end{eqnarray}
 Differentiating (\ref{x:eq}) with respect to $\o$ yields
$
0=\cL_\o(\o,Q^{\pm}(\o))+\cL_v(\o,Q^{\pm}(\o))Q^{\pm}_\o(\o)\mp Q^{\pm}_\o(\o) =\cL_\o(\o,Q^{\pm}(\o))$,
which is precisely the first equality in \eqref{eq9.5}. 

We are now ready to verify the boundary conditions.
We wish to show that $Q(q,\pm 1)=q+Q^{\pm}(\tau_q\o)$, or equivalently
\[
\pm 1=-\mathcal{G}_q(q,q+Q^{\pm}(\tau_q\o))=(\cL_v-\cL_\o)(\tau_q\o,Q^{\pm}(\tau_q\o)).
\]
This is an immediate consequence of \eqref{eq9.5}. It remains to verify $P(q,\pm 1)=\pm 1$. 
We certainly have
\begin{eqnarray*}
P(q,\pm 1)=\mathcal{G}_Q(q,q+Q^{\pm}(\tau_q\o)) =G(q,q+Q^{\pm}(\tau_q\o))-1
=\bar G(\tau_q\o,Q^{\pm}(\tau_q\o))-1 \nonumber 
\end{eqnarray*}
This and \eqref{eq9.5} imply $P(q,\pm 1)=\pm 1$, because $\bar G-1=\cL_v$.
\end{proof}
\begin{remark}
\normalfont
 $\s$ in \eqref{eq9.3} is motivated by \eqref{eq9.1}. It is chosen so that 
 $\cL(\o,Q^+(\o))=Q^{+}(\o).$
\end{remark}
\begin{remark}
\normalfont
The monotonicity condition $G_q=\cG_{Qq}<0$ may be expressed  as
$
H_\o(\o,a)<H_a(\o,a)(1-\s'(\o)).
$
The derivative of $\s$ may be calculated with the aid of \eqref{eq9.3}:
\begin{eqnarray}
\s'(\o)=\eta'(\o)-\frac 12H(\o,\eta(\o))\eta'(\o)-\frac 12\int_0^{\eta(\o)} H_\o(\o,a){\rm d}a 
=-\frac 12\int_0^{\eta(\o)} H_\o(\o,a) {\rm d}a. \nonumber
\end{eqnarray}
\end{remark}

\subsection{\textcolor{black}{The density of fixed points}} \label{sd}

When $F$ is a positive twist map,  it has a generating function $\cG(q,Q,\o)=\cL(\tau_q\o,Q-q)$ and any fixed point of $F$ is of the form $(q_0, \cL_v(\tau_{q_0}\o,0))$ where $q_0$ is a critical point of the random process
$\psi(q,\o)=\bar\psi(\tau_q\o)$ (Propositions \ref{easy:lem3} and \ref{easy:lem4}). 
We have also learned that any random process $\psi$ has
 infinitely many local maximums and minimums. In this section we give sufficient conditions to ensure that such a random process has a positive density of critical points, which in turn yields a positive density for fixed points of a monotone twist map. Let $\sharp B$ be the cardinality of a set $B$.
 
 \begin{definition} \label{density}
 The  \emph{density} of   $A\subset \bR$ is 
  ${\rm den}(A):=\lim_{\ell\to\i} \,\, (2\ell)^{-1}\sharp(A\cap[-\ell,\ell]).$
\end{definition}

Let us state a set of assumptions for the random process $\psi(q,\o)=\bar\psi(\tau_q\o)$ 
that would guarantee
the existence of a density for the set $Z(\o):=\{q \,\,| \,\,\psi'(q,\o)=0\}.$

\begin{hyp} \label{hypo}
\begin{itemize}
\item[(i)]  $\psi(q,\o)$ is twice differentiable almost surely and if
\[
\phi_\ell(\d;\o)=\sup\Big\{|\psi''(q,\o)-\psi''(\hat q,\o)| \,\,\, | \,\,\, q,\hat q\in[-\ell,\ell],\ |q-\hat q|\leq \d\Big\},
\]
then
$
\lim_{\d\to 0}\bE \ \phi_\ell(\d;\o)=0
$
for every $\ell>0$.
\item[(ii)] The random pair $(\bar \psi_\o(\o),\bar\psi_{\o\o}(\o))$ has a probability density $\rho(x,y)$. In other words,
for any bounded continuous function $J(x,y)$,
\[
\bE J(\psi'(q,\o),\psi''(q,\o))=\int_{\mathbb{R}} J(x,y)\,\rho(x,y)\,{\rm d}x {\rm d}y.
\]
\item[(iii)] 
There exists $\e>0$ such that
$\rho(x,y)$ is jointly continuous for $x$ satisfying $|x|\geq \e$.
\end{itemize}
\end{hyp}

We define $
\bar Z(\o):=\{q \, \, | \,\, \psi'(q,\o)=0,\ \psi''(q,\o)\neq 0\}$ and
$N_\ell(\o):=\bar Z(\o)\cap [-\ell,\ell]$. 
It is well known that if we assume Hypothesis \ref{hypo}, then
\begin{equation}\label{eq9.6}
\bE \ N_\ell(\o)={2\ell}\int_{\mathbb{R}} \rho(0,y)|y|\ {\rm d}y.
\end{equation} 
This is the celebrated \emph{Rice Formula} and its proof can be found in \cite{AhSt2000,AW}.  
Next we state a direct consequence of Rice Formula and the 
Ergodic Theorem.

\begin{theorem}\label{prop9.5} If  $\psi$ satisfies Hypothesis~\ref{hypo} then $\bar Z(\o)=Z(\o)$
almost surely and
\begin{equation}\label{eq9.7}
\lim_{\ell\to\i}\bE\left| \frac 1{2\ell} N_\ell(\o)-\int_{\mathbb{R}} \rho(0,y)|y|\ {\rm d}y\right|=0.
\end{equation}
\end{theorem}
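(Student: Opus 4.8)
The plan is to derive \eqref{eq9.7} from the Rice Formula \eqref{eq9.6} together with the Birkhoff Ergodic Theorem, after first disposing of the identification $\bar Z(\o)=Z(\o)$. For the latter, I would argue that under Hypothesis~\ref{hypo}(ii)--(iii) the ``degenerate'' set $\{q \mid \psi'(q,\o)=0,\ \psi''(q,\o)=0\}$ is almost surely empty (or at least has zero density, which is all that is needed). Indeed, by stationarity and the existence of the joint density $\rho(x,y)$, for any $q$ the probability that $(\psi'(q,\o),\psi''(q,\o))$ lies in the line $\{x=0\}$ is zero; a Fubini argument over $q\in[-\ell,\ell]$ then shows $\bE\,|\{q\in[-\ell,\ell] \mid \psi'(q,\o)=\psi''(q,\o)=0\}|=0$, so this set is Lebesgue-null in $q$ almost surely. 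A short additional argument — e.g. that a $C^1$ function whose critical set has positive measure must have a critical point that is a limit of critical points, combined with continuity of $\psi''$ — upgrades this to $Z(\o)\setminus\bar Z(\o)=\varnothing$ a.s., giving $N_\ell(\o)=\sharp(Z(\o)\cap[-\ell,\ell])$.

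Next I would set up the ergodic-theoretic step. Fix a large window size $L>0$ and consider the stationary $\bR$-action $\tau$ on $(\Omega,\cF,\bP)$. Define $g_L(\o):=\sharp\big(\bar Z(\o)\cap[0,L]\big)$, a nonnegative measurable function which by \eqref{eq9.6} (applied on $[0,L]$, using stationarity to shift the interval) satisfies $\bE\,g_L=L\int_\bR\rho(0,y)|y|\,{\rm d}y<\i$; finiteness of this integral follows from Hypothesis~\ref{hypo}(iii) and the second-moment-type control implicit in the setup (one needs $\int\rho(0,y)|y|\,{\rm d}y<\i$, which I would note follows from $\bE\,g_1<\i$, itself a consequence of regularity/$C^2$ bounds). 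Writing $[-\ell,\ell]$ as a union of roughly $2\ell/L$ translated copies of $[0,L]$ and using $g_L(\tau_a\o)=\sharp(\bar Z(\o)\cap[a,a+L])$ (which holds because $\bar Z(\tau_a\o)=\bar Z(\o)-a$, a direct consequence of the stationarity $\psi(q,\tau_a\o)=\psi(q+a,\o)$), Birkhoff's Ergodic Theorem for the $\bR$-action gives
\[
\lim_{\ell\to\i}\frac{1}{2\ell}N_\ell(\o)=\frac{1}{L}\,\bE\,g_L=\int_\bR\rho(0,y)|y|\,{\rm d}y
\]
almost surely, using ergodicity of $\bP$ to identify the limit with the expectation. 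This already gives a.s.\ convergence; the $L^1$ statement \eqref{eq9.7} then follows from the $L^1$ ergodic theorem, or alternatively by dominated/uniform-integrability considerations once one observes that $\frac{1}{2\ell}N_\ell(\o)$ has constant expectation equal to the limit, so a.s.\ convergence plus Scheffé's lemma yields $L^1$ convergence.

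The role of Hypothesis~\ref{hypo}(i) is to justify the Rice Formula \eqref{eq9.6} itself — it provides the modulus-of-continuity control on $\psi''$ needed so that the expected number of zeros of $\psi'$ is exactly the Kac–Rice integral rather than merely bounded by it — so I would simply cite \cite{AhSt2000,AW} for \eqref{eq9.6} and not reprove it. The main obstacle is the passage from the single-interval identity \eqref{eq9.6} to the almost-sure spatial average: one must be careful that the ergodic theorem applies to the additive functional $a\mapsto \sharp(\bar Z(\o)\cap[0,a])$, checking it is genuinely an integrable additive cocycle over the flow $\tau$, and that edge effects from the finitely many zeros near $\pm\ell$ and near the partition points are negligible after dividing by $2\ell$ (they are $O(L/\ell)\to 0$ once $L$ is fixed, then one may also let $L\to\i$ if desired, though it is not necessary). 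A secondary technical point is establishing $\int_\bR\rho(0,y)|y|\,{\rm d}y<\i$ and the emptiness of the degenerate critical set; both are mild given the regularity assumptions but should be stated explicitly.
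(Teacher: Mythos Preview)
Your approach is correct in outline and takes a genuinely different route from the paper. You apply an ergodic theorem directly to the zero-counting cocycle $a\mapsto\sharp(\bar Z(\o)\cap[0,a])$ and then invoke Scheff\'e's lemma (using that $\bE\,\frac{1}{2\ell}N_\ell$ is constant in $\ell$ by Rice) to pass from a.s.\ to $L^1$ convergence. The paper instead avoids the point-process ergodic theorem altogether: it introduces the mollified quantity $X_\e(\ell,\o)=\frac{1}{2\ell}\int_{-\ell+\e}^{\ell-\e}|\zeta'_\e*\hat\psi(q,\o)|\,{\rm d}q$ with $\hat\psi=1\!\!1(\psi'>0)$, records the pointwise lower bound $\frac{1}{2\ell}N_\ell\geq X_\e$, and recognises $X_\e$ as a genuine ergodic average of the $L^1$ function $\eta_\e(\o)=\big|\int\zeta'_\e(a)\hat\psi(a,\o)\,{\rm d}a\big|$. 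Standard Birkhoff then gives $X_\e\to\bE\,\eta_\e$ in $L^1$; the proof of Rice's formula in \cite{AW} gives $\bE\,\eta_\e\to\bar X:=\int\rho(0,y)|y|\,{\rm d}y$ as $\e\to 0$; and since Rice forces $\bE[\frac{1}{2\ell}N_\ell-X_\e]$ to vanish in the double limit while the integrand is nonnegative, the difference goes to zero in $L^1$, hence \eqref{eq9.7}. Your route is shorter and yields a.s.\ convergence as a bonus, at the price of invoking the ergodic theorem for stationary point processes (equivalently, for integrable additive cocycles over the flow), which is standard but a notch less elementary than the function form of Birkhoff the paper stays within.

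One caution: your sketch for $\bar Z(\o)=Z(\o)$ does not close. The Fubini step only shows that the degenerate set $\{q:\psi'(q,\o)=\psi''(q,\o)=0\}$ is Lebesgue-null almost surely, and the sentence about ``a $C^1$ function whose critical set has positive measure'' does not upgrade null to empty---a discrete set of degenerate zeros is null yet nonempty. What is actually needed is a Bulinskaya-type argument: under Hypothesis~\ref{hypo}, the $\bR^2$-valued process $q\mapsto(\psi'(q,\o),\psi''(q,\o))$ almost surely avoids the origin on every bounded interval, since a $C^1$ curve generically misses a fixed point of $\bR^2$ when the one-point density is locally bounded. (The paper's own proof does not spell out this step either.)
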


\begin{proof}
Pick a smooth function $\zeta:\bR\to[0,\i)$ such that its support is contained in the interval
$[-1,1]$, $\zeta(-a)=\zeta(a)$,  and  $\int_{\mathbb{R}}\zeta(q) {\rm d}q=1$. Set 
$\zeta_\e(q):=\e^{-1}\z(q/\e).$
It is not hard to show
\begin{equation}\label{eq9.8}
\frac 1{2\ell}N_\ell(\o)\geq  \frac 1{2\ell}\int_{-\ell+\e}^{\ell-\e}\left|\zeta'_\e*\hat\psi(q,\o)\right|{\rm d}q=:X_\e(\ell,\o),
\end{equation}
where $\hat \psi(q,\o)=1\!\!1(\psi'(q,\o)>0)$ (this is \cite[Lemma~3.2]{AW}). We note that if 
$
\eta_\e(\o)=\left|\int_{\mathbb{R}} \zeta'_\e(a)\hat\psi(a,\o)\ {\rm d}a\right|,
$
then 
\begin{eqnarray}
\eta_\e(\tau_q\o)=\left|\int_{\mathbb{R}} \zeta'_\e(a)\hat\psi(a,\tau_q\o)\ {\rm d}a\right|
=\left|\int_{\mathbb{R}} \zeta'_\e(a)\hat\psi(a+q,\o)\ {\rm d}a\right|=\left|\zeta'_\e*\hat\psi(q)\right|. \label{this}
\end{eqnarray}
From \eqref{this} and the Ergodic Theorem we deduce
\begin{equation}\label{eq9.9}
\lim_{\ell\to\i} \frac 1{2\ell}\int_{-\ell}^{\ell}\left|\zeta'_\e*\hat\psi(q,\o)\right|{\rm d}q=\bE \eta_\e,
\end{equation}
almost surely and in the  ${\rm L}^1(\bP)$ sense. 

On the other hand,
\begin{equation}\label{eq9.10}
\lim_{\e\to 0}\bE \eta_\e=\int_{\mathbb{R}} \rho(0,y)|y|\ {\rm d}y=:\bar X.
\end{equation}
This follows the proof of Rice Formula, see \cite[proof of Theorem 3.4]{AW}. 

Again by Rice Formula,
$
0=\bE   \left[\frac 1{2\ell} N_\ell(\o)-\bar X\right]
=\bE\left[\frac 1{2\ell} N_\ell(\o)-X_\e(\ell,\o)\right]-
\bE\left[X_\e(\ell,\o)-\bar X\right], \nonumber
$
which implies
\begin{equation}\label{eq9.11}
\lim_{\e\to 0}\, \limsup_{\ell\to\i}\bE\left[\frac 1{2\ell} N_\ell(\o)-X_\e(\ell,\o)\right]=0,
\end{equation}
because by \eqref{eq9.9} and \eqref{eq9.10},
\begin{equation}\label{eq9.12}
\lim_{\e\to 0}\, \limsup_{\ell\to\i}\bE\left|X_\e(\ell,\o)-\bar X\right|=0.
\end{equation}
From \eqref{eq9.8} and \eqref{eq9.11} we deduce
\begin{eqnarray} \label{u}
 \lim_{\e\to 0}\limsup_{\ell\to\i}\bE\left|\frac 1{2\ell} N_\ell(\o)-X_\e(\ell,\o)\right|=0.
\end{eqnarray}
Then \eqref{u} and \eqref{eq9.12} imply \eqref{eq9.7}.
\end{proof}

\section{\textcolor{black}{Complexity $N=1$ area\--preserving random twists}} \label{ergsec}

This section proves a result which implies the $N=1$ case in Theorem~\ref{epbt0}, \ref{+2}. A result concerning the
spectral nature of the fixed points is also proven.

\subsection{\textcolor{black}{Domain of random generating functions} } 

We begin by describing the domain the random generating function of a complexity one twist.

\begin{lemma} \label{above}
 Let $F$ be an area\--preserving random twist of complexity one with decomposition
$F=F_1\circ F_0,$
 where $F_1$ is a positive monotone area\--preserving random twist and $F_0$ is negative monotone area\--preserving random twist.  Let $\mathcal{G}^0,\, \mathcal{G}^1$ be the generating
 functions, respectively, of the monotone twists $F_0,\, F_1$. 
Then $G_1:=F_1^{-1}$ is a negative
 area\--preserving random twist with
 generating function given by 
$\hat{\mathcal{G}}^1(q,\, \xi):=-\mathcal{G}^1(\xi,\, q),$
and  if 
 \begin{eqnarray} \label{D_1}
 D_0:=\op{Domain}({\mathcal{G}^0})\,\,\, 
\textup{and}\,\,\, D_1:=\op{Domain}(\hat{\mathcal{G}}^1),
 \end{eqnarray}
 then we have a proper inclusion of sets $D_0 \subsetneq D_1$ (see Figure~\ref{Lfig4}). 
\end{lemma}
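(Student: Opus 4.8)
The plan is to establish the three assertions of the lemma in turn: that $G_1=F_1^{-1}$ is a negative monotone area\--preserving random twist, that $\hat{\mathcal{G}}^1$ is a generating function for it, and that $D_0\subsetneq D_1$; the last is the only substantive point. For the first I would simply observe that $F_1$ is a positive area\--preserving random twist, and that ``negative monotone'' is \emph{defined} to mean the stationary lift of the inverse of such a map, so that $G_1$ qualifies once one checks that it is again a $q$\--stationary lift --- which is Proposition~\ref{prop:lifts}~(P.\ref{item:1}) --- and that it has boundary invariance (immediate: $F_1$ fixes the lines $p=\pm1$, hence so does $F_1^{-1}$) and the finite\--moment bound (inherited from the regularity of $F_1$).

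For the generating function I would start from the defining relation $F_1\big(q,-\mathcal{G}^1_q(q,Q)\big)=\big(Q,\mathcal{G}^1_Q(q,Q)\big)$ of the complexity\--$0$ generating function $\mathcal{G}^1$ of $F_1$, apply $F_1^{-1}=G_1$ to both sides to get $G_1\big(Q,\mathcal{G}^1_Q(q,Q)\big)=\big(q,-\mathcal{G}^1_q(q,Q)\big)$, and differentiate $\hat{\mathcal{G}}^1(q,\xi):=-\mathcal{G}^1(\xi,q)$ to record $-\hat{\mathcal{G}}^1_q(q,\xi)=\mathcal{G}^1_Q(\xi,q)$ and $\hat{\mathcal{G}}^1_\xi(q,\xi)=-\mathcal{G}^1_q(\xi,q)$. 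Renaming $(q,Q)$ as $(\xi,q)$ in the identity above turns it into $G_1\big(q,-\hat{\mathcal{G}}^1_q(q,\xi)\big)=\big(\xi,\hat{\mathcal{G}}^1_\xi(q,\xi)\big)$, which is exactly \eqref{key:for} for $G_1$ with $N=0$; and since $\mathcal{G}^1(q,Q)=\mathcal{L}^1(\tau_q\o,Q-q)$, a base\--point shift gives $\hat{\mathcal{G}}^1(q,\xi)=\hat{\mathcal{L}}^1(\tau_q\o,\xi-q)$ with $\hat{\mathcal{L}}^1(\o,v):=-\mathcal{L}^1(\tau_v\o,-v)$, so $\hat{\mathcal{G}}^1$ really is a complexity\--$0$ generating function, whose domain is $D_1=\{(q,\xi)\mid(\xi,q)\in\op{Domain}(\mathcal{G}^1)\}$ by construction.

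The substance is the inclusion $D_0\subsetneq D_1$, and here the twist hypothesis on the composite $F$ must enter. Writing $Q_i(q,p)=q+\bar Q_i(\tau_q\o,p)$ for the first coordinate of $F_i$, I would first unwind both domains as closed bands about the diagonal: since $\mathcal{G}^i(q,Q)=\mathcal{L}^i(\tau_q\o,Q-q)$ with $\mathcal{L}^i(\o,\cdot)$ defined exactly over the image of $p\mapsto\bar Q_i(\o,p)$ (Proposition~\ref{easy:lem4}), and since $F_0$ is negative monotone (so $p\mapsto\bar Q_0(\o,p)$ decreases, $\bar Q_0(\o,1)<0<\bar Q_0(\o,-1)$) while $F_1$ is positive monotone, one obtains
\begin{align*}
D_0&=\{(q,\xi)\mid Q_0(q,1)\leq\xi\leq Q_0(q,-1)\},\\
D_1&=\{(q,\xi)\mid Q_1(\xi,-1)\leq q\leq Q_1(\xi,1)\}.
\end{align*}
Given $(q,\xi)\in D_0$, the boundary\--twisting hypothesis makes $a\mapsto Q_1(a,1)$ and $a\mapsto Q_1(a,-1)$ increasing, so $Q_0(q,1)\leq\xi\leq Q_0(q,-1)$ propagates to
\begin{align*}
Q_1(\xi,1)&\geq Q_1\big(Q_0(q,1),1\big)=Q(q,1),\\
Q_1(\xi,-1)&\leq Q_1\big(Q_0(q,-1),-1\big)=Q(q,-1),
\end{align*}
where $Q(q,\pm1)=Q_1\big(Q_0(q,\pm1),\pm1\big)$ is the boundary behaviour of $F=F_1\circ F_0$ (boundary invariance of $F_0$ and $F_1$). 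Since $F$ is an area\--preserving random twist, its boundary\--twisting condition gives $Q(q,1)>q>Q(q,-1)$, hence $Q_1(\xi,-1)<q<Q_1(\xi,1)$; that is, $(q,\xi)\in{\rm int}(D_1)$. Thus $D_0\subseteq{\rm int}(D_1)$, and in particular $D_0\subsetneq D_1$, which finishes the proof.

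The step I expect to be the main obstacle is this last one, specifically the reduction to the endpoints $\xi=Q_0(q,\pm1)$. The two bands cannot be compared width by width, because the half\--widths of $D_0$ and $D_1$ near a point are controlled by $\bar Q_0(\tau_q\o,\cdot)$ and $\bar Q_1(\tau_\xi\o,\cdot)$, evaluated at the \emph{different} base points $\tau_q\o$ and $\tau_\xi\o$; the monotonicity of the boundary maps $a\mapsto Q_1(a,\pm1)$ is exactly what lets one replace $\xi$ by $Q_0(q,\pm1)$, and at that endpoint the composition identity for $F=F_1\circ F_0$ together with the positivity of the twist $F$ closes the argument.
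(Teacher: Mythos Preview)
Your proof is correct and follows essentially the same route as the paper. The only cosmetic difference is that you parametrize $D_1$ via the boundary maps of $F_1$ and invoke the composition $F=F_1\circ F_0$ directly, whereas the paper parametrizes $D_1$ via the boundary maps of $G_1=F_1^{-1}$ and uses the equivalent identity $G_1=F_0\circ F^{-1}$; after translating notation the two boundary comparisons are identical, and your conclusion $D_0\subseteq\operatorname{int}(D_1)$ matches the paper's strict boundary inequalities.
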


\begin{proof}
Note that $G_1(a,\,\pm 1)=(Q^\pm_1(a),\,\pm 1)$ and $F_0(a,\,\pm 1)=(Q^\pm_0(a),\,\pm 1),$
 with $\pm (Q^\pm_i(a)-a)<0$ and $Q^\pm_i$ increasing. Since $F$ is an area\--preserving random twist map, 
we may write $F^{-1}(q,\,\pm1)=(\hat{Q}^\pm(q),\,\pm1)$ with $\hat{Q}^\pm$ increasing and such that
 $
\pm(\hat{Q}^\pm(q)-q)<0
 $
 for all $q$. For $i=0,\,1$ let 
 $
 \partial^\pm D_i=\{(a, Q^\mp_i(a)) \,\, | \,\, a \in \mathbb{R}\}
 $
 denote the boundary curves of $D_i$.  From $G_1=F_0\circ F^{-1}$, we deduce
  $Q^\pm_0 (\hat{Q}^\pm(q))=Q^\pm_1(q),$
  and therefore 
  \begin{eqnarray} \label{n1}
  Q^-_0(q)<Q^-_1(q)
  \end{eqnarray}
and 
\begin{eqnarray} \label{n2}
Q^+_0(q)>Q^+_1(q).
\end{eqnarray}
Then \eqref{n1} (respectively \eqref{n2}) implies that  the upper (respectively lower) boundary of $D_1$ is strictly
 above (respectively below) $D_0$. It follows that $D_0 \subsetneq D_1$, as desired. 
 \end{proof}

\begin{figure}[htbp]
\psfrag{D1}{$D_0$}
\psfrag{D2}{$D_1$}
\psfrag{GR}{$\nabla {\cI}$}
\psfrag{pD1+}{$\partial^+ D_0$}
\psfrag{pD1-}{$\partial^- D_0$}
\psfrag{beta}{$\,$}
\psfrag{grQ_1}{$\textup{Graph}(Q^0)$}
\psfrag{Q2}{$\textup{Graph}(Q^1)$}
  \begin{center}
    \includegraphics[width=6cm]{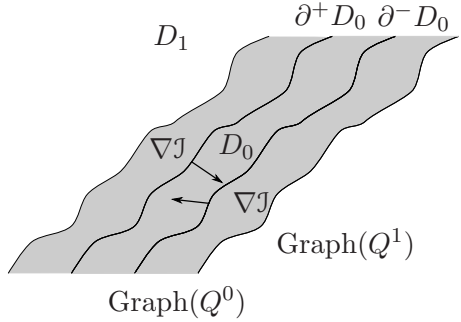}
   \caption{{\small The domains $D_0$ and  $D_1$ and the gradient
    $\nabla {\cI}$.}}
    \label{Lfig4}
  \end{center}
\end{figure}

 \subsection{\textcolor{black}{Gradients and geometry of domains}}
  
  Let $D_0$ be defined by \eqref{D_1}.
  
 \begin{cor} \label{2above}
 The map
 \begin{eqnarray} \label{m}
{\cI}(q,\, \xi):=\mathcal{G}^0(q,\, \xi)+\mathcal{G}^1(\xi,\,q)
 \end{eqnarray}
 is well\--defined on the set $D_0$, cf. \eqref{D_1}.
\end{cor}

\begin{proof}
If $(\xi,\, q) \in D_0 \cap D_1$ then the sum 
$\mathcal{G}^0(q,\, \xi)+\mathcal{G}^1(\xi,\, q)$ is well defined. 
The corollary follows from Lemma \ref{above}.
 \end{proof}

 \begin{lemma} \label{gradlem}
 The gradient  $\nabla {\cI}$ of 
${\cI} \colon D_0 \to \mathbb{R}$ is inward on  $\partial^{\pm}D_0$ and $\mp \cI_\xi,\pm\cI_q >0$ on $\partial^{\pm}D_0$.
 \end{lemma}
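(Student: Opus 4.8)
The plan is to reduce the statement to the defining identity of a generating function together with the boundary behaviour of $F_0$, using the strict inclusion $D_0\subsetneq D_1$ from Lemma~\ref{above} to turn the relevant inequalities into strict ones.

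\emph{Differentiating $\cI$.} From \eqref{m} and the chain rule, $\cI_q(q,\xi)=\mathcal{G}^0_q(q,\xi)+\mathcal{G}^1_Q(\xi,q)$ and $\cI_\xi(q,\xi)=\mathcal{G}^0_Q(q,\xi)+\mathcal{G}^1_q(\xi,q)$. Recall (as in the proof of Lemma~\ref{genf:def}) that for a monotone twist $F_i$ with generating function $\mathcal{G}^i$ one has $\mathcal{G}^i_Q(a,b)=P_i$ and $\mathcal{G}^i_q(a,b)=-p_i$ whenever $F_i(a,p_i)=(b,P_i)$. So the whole proof is a matter of evaluating these two formulas on the two boundary arcs.

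\emph{Evaluating on the boundary arcs.} A point $(q,\xi)\in\partial^+D_0$ satisfies $\xi=Q^-_0(q)$, i.e.\ $(q,\xi)$ is the pair realised by $F_0$ along the edge $\{p=-1\}$ (this is the arc bounding $D_0$ from above precisely because $F_0$ is \emph{negative} monotone); by boundary invariance $F_0(q,-1)=(\xi,-1)$, so $\mathcal{G}^0_Q(q,\xi)=-1$ and $\mathcal{G}^0_q(q,\xi)=1$. For the $\mathcal{G}^1$-terms I would invoke Lemma~\ref{above}: since \eqref{n1}--\eqref{n2} say the boundary curves of $D_1$ strictly enclose $D_0$, the point $(q,\xi)$ lies in the interior of $D_1$, hence $(\xi,q)$ is an interior point of the domain of $\mathcal{G}^1$; writing $F_1(\xi,p_1)=(q,P_1)$, interiority forces $p_1\in(-1,1)$, and since $F_1$ preserves $\mathcal{S}$ we also get $P_1\in(-1,1)$. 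Therefore on $\partial^+D_0$ one has $\cI_q=1+P_1>0$ and $\cI_\xi=-1-p_1<0$, which is the asserted pattern $+\cI_q>0$, $-\cI_\xi>0$. The arc $\partial^-D_0$ is treated identically with the edge $\{p=+1\}$: there $\mathcal{G}^0_Q(q,\xi)=1$, $\mathcal{G}^0_q(q,\xi)=-1$, and one gets $\cI_q=-1+P_1<0$, $\cI_\xi=1-p_1>0$.

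\emph{Inwardness.} Near $\partial^+D_0$ the region $D_0$ is $\{\xi\le Q^-_0(q)\}$, so the vector $((Q^-_0)'(q),-1)=\nabla(Q^-_0(q)-\xi)$ points into $D_0$; since $Q^-_0$ is increasing (Lemma~\ref{above}) and we have just shown $\cI_q>0$, $\cI_\xi<0$ there, it follows that $\nabla\cI\cdot((Q^-_0)'(q),-1)=\cI_q(Q^-_0)'(q)-\cI_\xi>0$, i.e.\ $\nabla\cI$ is inward. The computation on $\partial^-D_0$, where $D_0=\{\xi\ge Q^+_0(q)\}$ and $\cI_q<0$, $\cI_\xi>0$, is symmetric. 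I expect the only real difficulty to be bookkeeping of the monotonicity signs: $F_0$ is negative monotone while $F_1$ is positive monotone, so one must correctly pair each arc $\partial^{\pm}D_0$ with the edge $\{p=\mp1\}$ of $F_0$, and one must genuinely use the \emph{strict} inclusion $D_0\subsetneq D_1$, since a non-strict inclusion would only yield $|p_1|,|P_1|\le1$, which is too weak for the strict inequalities claimed.
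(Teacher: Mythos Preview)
Your proof is correct and follows essentially the same approach as the paper's: compute $\cI_q=P-p$ and $\cI_\xi=\eta-\eta'$ via the generating-function identities, evaluate on each boundary arc using $p=\eta=\mp 1$ there, and invoke the strict inclusion $D_0\subsetneq D_1$ (i.e.\ \eqref{n1}--\eqref{n2}) to force the $F_1$-data strictly into $(-1,1)$, then dot with the inward normal $(\pm(Q_0^{\mp})'(q),\mp 1)$. The paper carries out the detailed computation on $\partial^-D_0$ while you do it on $\partial^+D_0$, and your notation $(p_1,P_1)$ replaces the paper's $(\eta',P)$, but the argument is identical.
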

 
 \begin{proof}
 If $F_0(q,\,p)=(\xi,\, \eta)$ and $F_1(\xi,\, \eta')=(q,\, P)$, then
 ${\cI}_q(q,\, \xi)=P-p$ and ${\cI}_\xi(q,\, \xi)=\eta-\eta'$
 hold.
We express the domain $D_0$  of ${\cI}$ given by (\ref{D_1}) as 
$
\{(\xi, \,q) \,\,\, | \,\,\, p=p(q, \, \xi)=- \mathcal{G}^0_q(q,\, \xi) \in [-1,\,1]\}. 
 $
  On $\partial^{-}D_0$,
 $\eta=p=1$ and $P,\, \eta'<1$ (because $D_0 \subsetneq D_1$).  
So on $\partial^{-}D_0$ we have $ {\cI}_\xi(q,\, \xi)>0$ and 
 $ {\cI}_q(q,\, \xi)<0$. 
 On $\partial^+D_0$ we have $\eta=p=-1$ and $\eta',\, P<1$. So on $\partial^{+}D_0$ we have ${\cI}_\xi(q,\, \xi)<0$ and  
 $ {\cI}_q(q,\, \xi)>0$.  
 The lower boundary $\partial^{-}D_0$ is the graph of an increasing function $q \mapsto h(q)$, and of course $h'(q)>0$. So, the
 tangent to $\partial^{-}D_0$ is $(1,\, h'(q))$ and the inward normal
 is $(-h'(q),\,1)$.  On $\partial^{-}D_0$ we have $ {\cI}_\xi(q,\, \xi)>0$ and 
 $ {\cI}_q(q,\, \xi)<0$. So we have that the dot product
 $
 \langle ( {\cI}_q(q,\, \xi),\, {\cI}_\xi(q,\, \xi)),\,\, 
 (-h'(q),\,1) \rangle=
 -h'(q) {\cI}_q(q,\, \xi)+  {\cI}_\xi(q,\, \xi)>0.
 $
 That is, on the lower boundary $\nabla {\cI}$ is inward. 
 
The case of the upper boundary is analogous.
 \end{proof}

 \subsection{\textcolor{black}{Fixed points}}

 If we set $\hat{D}:=\{(q,\,a)\, | \, (q,\, q+a) \in D_0\},$ we have that, for a pair
 of random processes $B^-(\tau_q\omega), B^+(\tau_q\omega)>0$,
 $
 \hat{D}=\{(q,\,a) \,\, | \,\, -B^-(\tau_q\omega)<a<B^+(\tau_q\omega)\}.
 $
We then use the notation of Lemma~\ref{genf:def} to set
 $
 \bar{{\cI}}( \tau_q\omega,a):=\cL^0(\tau_q\o,a)+\cL^1(\tau_a\tau_q\o,-a)={\cI}(q,\, q+a).
 $
Define the map $\bar K \colon \Omega \times [-1,\,1] \to \mathbb{R}$ by
$
 K(q,p;\o)=\bar K(\tau_q\omega,p)=\bar{\cI}\left(\tau_q\omega, B(\tau_q\omega,p)\right) ,
$
where
$
 B(\tau_q\omega,p)=\frac{p+1}2 B^+(\tau_q\omega)+\frac {p-1}2\,B^-(\tau_q\omega).
$
Note that 
\begin{align} \nonumber
 K_p(q,p;\o)=&\frac 12 \bar{\cI}_a\left(\tau_q\omega, B(\tau_q\omega,p)\right)
\left(B^+(\tau_q\omega)+B^-(\tau_q\omega)\right), \\ \label{eq8.2}
K_q(q,p;\o)=&  \bar{\cI}_\o\left(\tau_q\omega, B(\tau_q\omega,p)\right)
  +\bar{\cI}_a\left(\tau_q\omega, B(\tau_q\omega,p)\right)B_\o(\tau_q\o,p). 
 \end{align}
Hence there is a one-one correspondence between the critical points of $K$ and $\cI$. 
From \eqref{eq8.2} and Lemma~\ref{gradlem} we conclude the following.

\begin{lemma} \label{gradlem2}
 The gradient  $\nabla {K}$ of $K \colon \mathcal{S} \times \O \to \mathbb{R}$
is inward on  the boundary of $\mathcal{S}$.
 \end{lemma}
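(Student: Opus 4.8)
The plan is to reduce Lemma~\ref{gradlem2} to Lemma~\ref{gradlem} via the explicit formulae in \eqref{eq8.2}, using the sign information about $\cI_q$ and $\cI_\xi$ on $\partial^\pm D_0$ together with the positivity of $B^\pm$. First I would recall the geometry: the boundary of $\mathcal{S}=\bR\times[-1,1]$ consists of the two lines $p=\pm1$, and the inward normal to $\mathcal{S}$ is $(0,1)$ at $p=-1$ and $(0,-1)$ at $p=+1$. So ``$\nabla K$ is inward on $\partial\mathcal{S}$'' amounts to the two sign conditions $K_p(q,-1;\o)>0$ and $K_p(q,1;\o)<0$ for (almost) all $q$ and $\o$.

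Next I would trace through the correspondence between $\partial\mathcal{S}$ and $\partial^\pm D_0$. By construction $B(\tau_q\o,1)=B^+(\tau_q\o)$ and $B(\tau_q\o,-1)=-B^-(\tau_q\o)$, and recalling $\hat D=\{(q,a)\mid -B^-(\tau_q\o)<a<B^+(\tau_q\o)\}$ together with $\bar\cI(\tau_q\o,a)=\cI(q,q+a)$, the line $p=1$ is carried to the upper boundary portion of $D_0$ (where $a=B^+(\tau_q\o)$, i.e.\ $q+a\in\partial^+D_0$ in the $(q,\xi)$ picture — I would double-check the sign convention against Lemma~\ref{gradlem}, where $\partial^+D_0$ corresponds to $p=-1$ and $\partial^-D_0$ to $p=+1$, so in fact $p=1$ maps to $\partial^-D_0$ and $p=-1$ maps to $\partial^+D_0$). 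Then from the first line of \eqref{eq8.2},
\[
K_p(q,p;\o)=\tfrac12\,\bar\cI_a\big(\tau_q\o,B(\tau_q\o,p)\big)\big(B^+(\tau_q\o)+B^-(\tau_q\o)\big),
\]
and since $B^+ +B^->0$, the sign of $K_p$ at $p=\pm1$ equals the sign of $\bar\cI_a$ evaluated at the corresponding boundary point of $D_0$. Now $\bar\cI_a(\tau_q\o,a)=\cI_\xi(q,q+a)$ up to the obvious chain-rule bookkeeping (differentiating $\bar\cI(\tau_q\o,a)=\cI(q,q+a)$ in $a$ gives exactly $\cI_\xi(q,q+a)$). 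By Lemma~\ref{gradlem}, $\cI_\xi>0$ on $\partial^-D_0$ and $\cI_\xi<0$ on $\partial^+D_0$; translating back, this gives $K_p(q,1;\o)<0$ and $K_p(q,-1;\o)>0$ — wait, I must be careful: if $p=1\leftrightarrow\partial^-D_0$ then $K_p(q,1;\o)$ has the sign of $\cI_\xi$ on $\partial^-D_0$, which is $>0$; that would be the wrong sign, so the correct matching must be $p=1\leftrightarrow\partial^+D_0$. I would fix the convention once and for all by comparing the condition ``$p=p(q,\xi)=-\cG^0_q(q,\xi)=1$'' used in Lemma~\ref{gradlem} with the definition $K(q,p)=\bar\cI(\tau_q\o,B(\tau_q\o,p))$ and the relation $\cI_q(q,\xi)=P-p$; once the dictionary is pinned down, the two sign statements of Lemma~\ref{gradlem} deliver exactly $K_p(q,-1)>0$, $K_p(q,1)<0$, which is the claim.

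The main obstacle I anticipate is purely bookkeeping: getting the orientation/sign conventions consistent between the $(q,\xi)$-description of $D_0$ (with its boundary curves $\partial^\pm D_0$ labeled by which of $p=\pm1$ they come from), the shifted coordinates $(q,a)=(q,\xi-q)$ defining $\hat D$, and the affine reparametrization $a=B(\tau_q\o,p)$ defining $K$. There is no real analytic difficulty — no estimate, no limit — only the risk of an off-by-a-sign error, so I would present the chain-rule computation of $K_p$ and $K_q$ carefully (these are already displayed in \eqref{eq8.2}), note that $B^\pm>0$ kills any ambiguity in the multiplicative factor, and then invoke Lemma~\ref{gradlem} verbatim. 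I would close by remarking that the $K_q$ formula is not needed for this lemma (the boundary $\partial\mathcal{S}$ is ``horizontal,'' so only $K_p$ enters the inward-normal condition), though it will matter later when locating critical points of $K$ in the interior.
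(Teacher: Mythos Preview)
Your approach is exactly what the paper intends: it deduces Lemma~\ref{gradlem2} directly from \eqref{eq8.2} and Lemma~\ref{gradlem}, with no further argument. Your final matching $p=1\leftrightarrow\partial^+D_0$, $p=-1\leftrightarrow\partial^-D_0$ is the correct one (since $B(\tau_q\o,1)=B^+>0$ gives $\xi=q+B^+$, the upper boundary), and then $\mp\cI_\xi>0$ on $\partial^\pm D_0$ combined with $B^++B^->0$ yields $\mp K_p(q,\pm1)>0$ as required; the earlier back-and-forth in your write-up should simply be excised.
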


 The following result implies the case $N=1$ in Theorem~\ref{epbt0}. 
  
 \begin{theorem}  \label{keyresult}
 Let $\bar K \colon \O  \times [-1,\,1] \to \mathbb{R}$ be a $\op{C}^1$\--map
 such that
$\mp\bar K_p(\cdot,\pm 1)>0$.
 Let $K(q,p;\o):=\bar K(\tau_q\omega,p).$ 
 \begin{enumerate}[{\rm (a)}]
 \item \label{xa}
 $K$ has infinitely many critical points; 
 \item \label{xb}
 Furthermore,  the critical points of $K$ occur as follows:
 \begin{itemize}
 \item[{\rm (1)}]
 Either $K$ has a continuum of critical points;
 \item[{\rm (2)}]
Or  $K$ has both infinitely many local maximums, and infinitely many saddle points or  local minimums.
 \end{itemize}
 \end{enumerate}
 \end{theorem}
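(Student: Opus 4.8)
The plan is to reduce the statement to the one--dimensional oscillation phenomenon already isolated in Proposition~\ref{prop:key}, using the gradient flow of $K$ and the boundary condition $\mp\bar K_p(\cdot,\pm1)>0$ to confine the dynamics to $\mathcal{S}$. First I would consider the (negative) gradient flow $\dot x=-\nabla K(x;\o)$ on the strip $\mathcal{S}=\bR\times[-1,1]$. The hypothesis $\mp\bar K_p(\cdot,\pm1)>0$ says exactly that $\nabla K$ points strictly inward on $\partial\mathcal{S}$ (this is the content of Lemma~\ref{gradlem2} in the application), so $\mathcal{S}$ is forward invariant for the negative gradient flow and the flow is complete. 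Since $K$ is $q$--stationary, $K(q,p;\o)=\bar K(\tau_q\o,p)$, the flow commutes with the $\bR$--action on $\Omega$ in the sense that $\phi^s(q+a,p;\o)=(a,0)+\phi^s(q,p;\tau_a\o)$, and moreover $\|\nabla K\|$ is a stationary process; this stationarity is what will let me transfer the argument of Proposition~\ref{prop:key} to the present two--dimensional setting.

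Next I would argue by contradiction for part (\ref{xa}). Suppose $K(\cdot,\cdot;\o)$ has only finitely many critical points with positive probability; by ergodicity this event has probability one, and in fact one can then show the number of critical points in $\bR\times[-1,1]$ is almost surely finite. The key point is that $K$ restricted to $\mathcal{S}$ is, after projecting out the bounded $p$--direction, governed by a stationary process in $q$: define $m(q,\o):=\min_{p\in[-1,1]}\bar K(\tau_q\o,p)$ (or work with the boundary values $\bar K(\tau_q\o,\pm1)$, whichever is cleaner for the monotone twist it comes from). If $K$ had no critical points in a half--strip $[L,\infty)\times[-1,1]$, the inward--pointing gradient on the horizontal boundaries plus the absence of interior zeros of $\nabla K$ forces $\partial_q K$ to have a sign there, hence $m(q,\o)$ is eventually monotone in $q$; then the ergodic/stationarity argument of Proposition~\ref{prop:key} (using $\bE\,J(m(q,\o))=\bE\,J(m(0,\o))$ for bounded continuous $J$, and ergodicity to make the limit $m(\infty)$ deterministic) forces $m(q,\o)$ to be constant in $q$ with probability one, which together with the inward gradient condition contradicts regularity/non--degeneracy of $K$ unless $K$ itself is independent of $q$ — and a $q$--independent $K$ satisfying $\mp\bar K_p(\cdot,\pm1)>0$ still has a continuum of critical points, namely a critical value of $p\mapsto\bar K(\o,p)$ for each $q$. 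So in all cases $K$ has infinitely many critical points, proving (\ref{xa}).

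For part (\ref{xb}) I would refine the dichotomy using the structure of the gradient flow on $\mathcal{S}$. If $K$ is Morse (away from the degenerate case (1) of a continuum of critical points), then on each long vertical slab $[L,L']\times[-1,1]$ with the inward boundary condition, a mountain--pass / min--max argument on the space of paths joining the two ends produces a critical point, and counting the min--max and min critical points over a sequence of slabs of growing length yields infinitely many of each kind; the local maxima arise as the ``peaks'' separating consecutive basins of the negative gradient flow, of which there must be infinitely many because (by the oscillation of the stationary process as in Proposition~\ref{prop:key}) $K$ cannot be eventually monotone along $q$. Concretely: let $a_1<a_2<\cdots$ be consecutive local maxima of $q\mapsto\psi(q)$ for a suitable reduction $\psi$; between $a_k$ and $a_{k+1}$ the function must pass through either a local minimum or a saddle of $K$, giving alternative (2), while the failure of this separation is precisely the degenerate alternative (1).

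\textbf{Main obstacle.} The hard part will be making the reduction from the two--dimensional $K$ on $\mathcal{S}$ to a genuinely one--dimensional stationary process precise: in Proposition~\ref{prop:key} one works directly with $\psi(q,\o)=\cL(\tau_q\o,0)$, a scalar stationary process, whereas here $K$ depends on $p\in[-1,1]$ as well, so ``monotone for large $q$'' and ``oscillates infinitely often'' must be formulated for the $p$--family and the relevant one--dimensional quantity ($\min_p\bar K$, or $\bar K$ evaluated along the curve of minimizing $p$) shown to inherit twice--differentiability/continuity and stationarity. Equally delicate is extracting genuine local maxima (not merely mountain--pass saddles) of the two--variable function $K$ from the oscillation of this scalar reduction — this requires the inward--gradient boundary condition to rule out ``escape'' of critical points to the boundary and to guarantee that the min--max levels are realized by honest critical points of $K$ on $\mathcal{S}$, plus an argument (ergodic in spirit) that these critical points genuinely accumulate at $\pm\infty$ rather than all lying in a bounded region.
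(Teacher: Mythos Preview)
Your reduction to a one-dimensional stationary process is the right instinct and matches the paper, but the execution has a genuine gap. The step ``no critical points in a half-strip plus inward boundary gradient forces $\partial_q K$ to have a sign there'' is unjustified: $\nabla K$ can be nowhere zero on the half-strip while $K_q$ changes sign (the gradient may simply rotate), so you cannot conclude that $m(q,\omega)=\min_p\bar K(\tau_q\omega,p)$ is eventually monotone. The paper bypasses this entirely by working with $\hat K(\omega):=\max_{p}\bar K(\omega,p)$ directly (note $\max$, not $\min$): the argument of Proposition~\ref{prop:key} applied to the scalar stationary process $q\mapsto\hat K(\tau_q\omega)$ gives the dichotomy immediately --- either $\hat K$ is constant, and then $\{(q,a(\tau_q\omega)):q\in\bR\}$ is a continuum of critical points, or it oscillates, and each local maximum $\bar q$ of $q\mapsto\hat K(\tau_q\omega)$ yields a genuine local maximum of $K$ at $(\bar q,a(\tau_{\bar q}\omega))$, since $K(\bar q,a)=\hat K(\tau_{\bar q}\omega)\geq\hat K(\tau_q\omega)\geq K(q,p)$ for nearby $(q,p)$. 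No sign argument on $K_q$ is needed, and this already gives (a). (Also, a sign slip: with $\mp\bar K_p(\cdot,\pm1)>0$ it is the \emph{positive} gradient flow $\dot x=\nabla K$ that is forward invariant on $\mathcal{S}$, not the negative one.)

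For (b) the paper's route differs from your mountain-pass sketch and is worth knowing. Having secured infinitely many local maxima as above, the paper runs the positive gradient flow and distinguishes two cases. If for some initial data the $q$-component is unbounded as $t\to\infty$, then density of $\{\tau_{q(t)}\omega\}$ in $\Omega$ (from ergodicity and $\bP$-positivity on open sets) forces $\nabla\bar K$ to vanish on a set of full measure, giving the continuum alternative. If instead $q(t)$ is always bounded, take a local maximum $\bar a$, a regular level curve $\gamma\simeq S^1$ around it, and flow each point of $\gamma$ \emph{backward} in time; if there were no critical points other than local maxima, every backward orbit would exit through $\partial\mathcal{S}$, yielding a homeomorphism from $S^1$ into the disconnected set $\bR\times\{-1\}\cup\bR\times\{1\}$, which is impossible. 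Hence some backward orbit limits on an interior critical point that is not a local maximum, and repeating this between consecutive local maxima (using the boundedness of $q(t)$ to trap the orbit) produces infinitely many such points. This topological obstruction is more elementary than a min--max construction and needs only the ${\rm C}^1$ gradient flow; your mountain-pass outline could perhaps be made to work, but as written it does not specify the path space or the linking geometry, and the ``consecutive local maxima of a suitable reduction $\psi$'' invoked at the end runs into the same unjustified reduction as in part (a).
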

 
 \begin{proof}
 We prove (\ref{xb}).  If $\hat{K}(\omega):=\max_{a\in[-1,1]}\bar K(\omega,\, a),$ 
 then either $\hat{K}$
 is constant or $\hat{K}(\tau_q\omega)$ oscillates almost surely. In the former case
 for almost all $\omega$, there exists $a(\omega)$ such that 
$\bar K(\omega,\,a(\omega) )$
 is a maximum and (of course) $a(\omega) \notin \{-1,\, 1\}$ by the assumption 
$\mp\bar K_p(\cdot,\pm 1)>0$. 
More concretely, we set
$
a(\o)=\max\{p\in[-1,1] \,\, | \,\,  \bar K(\o,p)=\hat K(\o)\}.
$
Hence $K$ has a continuum of critical points of the form $\{(q,a(\tau_q\o))\,\, | \,\,q\in\bR\}.$
In the latter case, there are infinitely many local maximums.
 Choose $\bar q$ so that $\hat{K}(\tau_{\bar q}\omega)$ is a local maximum. 
For such $(\bar q,\, \omega)$
 choose $a(\tau_{\bar q}\omega)$ so that 
$\bar K(\tau_{\bar q}\omega, \,  a(\tau_{\bar q}\omega))=\hat K(\tau_{\bar q}\o)$.  
Therefore $ K$  has infinitely many local
maximums by Proposition \ref{prop:key}. 

 Note that if
$$
\Omega_0:=\Big\{\omega \,\,\, | \,\,\, \{\tau_a\omega \, | \, a>a_0\} \,
\textup{is dense for every }a_0\, \Big\}, 
$$
then $\mathbb{P}(\Omega_0)=1$. This is true because the family $\{\tau_a:a\in\bR\}$ is ergodic
and by assumption $\bP(U)>0$ for every open set
$U$. Given $\o\in \O_0$,  consider the ordinary differential equation with initial value condition
 \begin{eqnarray}
    \label{equ:(3)}
    \left\{
      \begin{aligned}
        q'(t)&\,=\bar K_{\omega}( \tau_{q(t)}\omega, \, p(t))\\
         p'(t)&\,=\, \bar K_p( \tau_{q(t)}\omega, \, p(t))\\
          q(0)&\,=\,0,\ \ \ \ p(0)=a.     \end{aligned}\right.
  \end{eqnarray}
There are two possibilities; the first
 possibility is that for some $a$, we have that $q(t)$ is unbounded as
 $t \to \infty$, and in this case we claim that
        there is a continuum of critical points. The second possibility is that
         $q(t)$ is \emph{always} bounded as $t \to \infty$, 
and in this case we claim that $K$ has either  infinitely many
saddle points or local minimums. We  proceed with case by case.

\paragraph{\emph{Case 1}} (\emph{The map $q(t)$ is unbounded as $t \to \infty$ for some $\o\in\O_0$}). 
We want to prove that
$K$ has a continuum of critical points.  Define 
$\omega(t):=\tau_{q(t)}\omega,$
and let  $\phi^r$ be the flow of (\ref{equ:(3)}). Note that 
$
\frac{\op{d}}{\op{d}\!t} \bar  K(\omega(t),\,p(t) )= | \nabla \bar K (\omega(t),\,p(t) ) |^2 \geq 0.
$
Since $q(t)$ is unbounded,  $\omega(t)$ can approach almost any point in $\Omega$.   Moreover if $\tau_{q(t_n)}\omega \to \overline{\omega}$ and
$p(t_n) \to \overline{p}$, then we claim that
$\nabla\bar K(\overline{\omega},\overline{p})=0$. Indeed, 
if $\lambda:=\sup_{t>t_0} \bar K(\omega(t),\,p(t) ),$ we have
$\lambda=\bar K(\overline{\omega},\, \overline{p})$, and since
$$
\lambda=\sup_{t>t_0}\bar K(\omega(t+r),\, p(t+r)),
$$
 we have, for any $r>0$, that
$\lambda=\bar K(\overline{\omega},\, \overline{p})=
\bar K(\phi^r(\overline{\omega},\, \overline{p})).$
Hence $\nabla \bar K(\overline{\omega}, \, \overline{p})=0$; otherwise
$
\frac{\op{d}}{\op{d}\!r} \bar K(\phi^r(\overline{\omega},\,
\overline{p}))|_{r=0}>0,
$
which is impossible. Note that $\overline{\omega}$ could be
any point in $\Omega$ and therefore
for such $\overline{\omega}$ there exists
$\overline{p}=\overline{p}(\overline{\omega})$ such that
$\nabla K(\overline{\omega},\, \overline{p}(\overline{\omega}))=0,$ i.e. we
have
a continuum of critical points. This concludes Case 1.

\paragraph{\emph{Case 2}} (\emph{The map $q(t)=q(t,\, \omega)$ is bounded for every
 $\omega\in \O_0$}). 
We claim that if $\bar K$ does not have a continuum of fixed points, then $K$
has infinitely many critical points which  are local minimums or 
saddle points. Suppose that this is not the case, then we want to arrive at a contradiction. 
In order to do this let $\bar x=(\overline{q},\, \overline{p})$ be a local maximum, which we know it always
exists by the paragraphs preceding Case 1. In fact we may take a $\d>0$ such that $K(x)\leq K(\bar x)$ for every 
$x=(q,p)$ with $q\in(\bar q-\d,\bar q+\d)$.
Now take a closed curve $\gamma$ such 
that $(\overline{q},\, \overline{p})$ is inside $\gamma$ and if $a \in \gamma$, then 
 $
\lim_{t \to  \infty} \phi^t(a) = (\overline{q},\, \overline{p})=\overline{a}.
 $
 For example, we may take $\gamma$ to be part of level set of the function
 $(q,\, p) \mapsto K(q,\,p)$ with value $c<K(\bar x)$ very close
 to $K(\bar x)$.  Since $K$ does not have a continuum of critical points, we may choose such level set $\g$ such that
 $K$ has no critical point on $\g$. From this latter property 
 we deduce that $\g$ is homeomorphic to a circle.  
\begin{figure}[htbp]
\psfrag{L1}{$\ell_1$}
\psfrag{L0}{$\ell_{-1}$}
\psfrag{a}{$a$}
\psfrag{a'}{$a'$}
\psfrag{a''}{$a''$}
\psfrag{ga}{$\Gamma(a)$}
\psfrag{ga'}{$\Gamma(a')$}
\psfrag{ga''}{$\Gamma(a'')$}
\psfrag{gamma}{$\gamma$}
\psfrag{hata}{$\overline{a}$}
  \begin{center}
    \includegraphics[width=4.5cm]{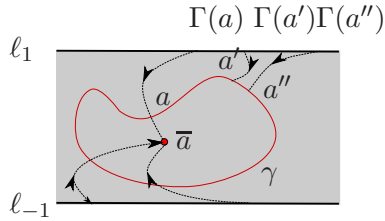}
   \caption{{\small Note that $a \in \gamma$ while $\overline{a}$ is enclosed by $\gamma$.}}
    \label{Lfig6}
  \end{center}
\end{figure}
 Let $a \in \gamma$. If there is no other type of critical points, then the curve
 $t \mapsto \phi^t(a)$, where $t \leq 0$, must reach the boundary for some $t_a<0$, because
 $
 \frac{\op{d}}{\op{d}\!t} K(\phi^t(a)) \geq 0.
 $
This defines a map
$
\Gamma \colon \gamma \to  ( \mathbb{R}\times \{-1\}) \cup (\mathbb{R} \times \{1\}) ,\,\,\,\, 
\Gamma(a):=  \phi_{t_a}(a).
$ 
We now argue that in fact $\G$ is continuous. To show the continuity of $\G$ at $a\in\g$, extend $K$ continuously near $\G(a)$, choose $\e>0$ and 
set $$\eta=(\phi^\th(a)\,\,\,|\,\,\,\th\in[t_a-\e,\e]).$$
Choose $\e$ sufficiently small so that $\phi^\th(a)$ is inside $\g$ for $\th\in (0,\e]$,
and $\phi^t(a)$ is outside the strip for $t\in({t_a-\e},t_a)$.
Choose $\hat a\in\g$ close to $a$ so that 
$
\eta'=(\phi^\th(b)\,\,\, | \,\,\,\th\in[t_a-\e,\e])
$
is uniformly close to $\eta$.
Since $\phi^{t_a}(\hat a)$ is near $\G(a)$,
we can choose $\hat a$ close enough to $a$ to guarantee that $\G(\hat a)$ is close to $\G(a)$. Moreover, we can easily show that $\G(c)$ is between $\G(a)$ and $\G(\hat a)$ for any $c$ between 
$a$ and $\hat a$ on $\g$. Hence $\G$ is a homeomorphism from a neighborhood of $a$ onto its image. 
Since $\gamma$ is homeomorphic to $S^1$, its
homeomorphic image $\Gamma(\gamma)$ cannot be fully contained inside of $  \mathbb{R}\times\{-1\}\cup\mathbb{R}\times \{+1\}$. Therefore there exists 
 $a\in\g$ such that any limit point $z$ of $\phi^t(a)$ as $t\to-\i$  is a critical point inside the strip that is not a local maximum. Clearly $z\notin (\bar q-\d,\bar q+\d)$. Let us assume for example that $z=(q_1,p_1)$ with $q_1>\bar q+\d$. Take another local maximum $\hat x=(\hat q,\hat p)$ to the right of $\bar x$ and assume that 
 $
 K(\hat x)\geq K(x)$ for all $x\in (\hat q-\hat\d,\hat q+\hat \d)\times[-1,1]$. 
 Since $\phi^t(a)$ cannot enter  
$(\hat q-\hat\d,\hat q+\hat \d)\times[-1,1]$ we deduce that $q_1\in(\bar q+\d,\hat q-\hat d)$. 

Repeating the above argument for other local maximums, we deduce that there exist infinitely critical points in between local maximums that are not local maximums.
\end{proof}

\subsection{\textcolor{black}{Nature of the fixed points in terms of generating function}} \label{nature}

A result similar to Theorem~\ref{prop9.2} holds for complexity $N=1$ twist maps.

\begin{theorem}\label{prop9.7} Let $F$ and $\cI$ be as in Lemma~\ref{above} and Corollary~\ref{2above}.

Let $(\bar q,\bar\xi)$ be a critical point of $\cI$ and $\vec x$ be the corresponding 
fixed point of $F$ as in Proposition~\ref{easy:lem3}. Assume that $\cI_{\xi\xi}(\bar q,\bar \xi)\neq 0$. Then ${\rm D}F(\vec x)$ has positive (respectively negative) eigenvalues
if and only if $\det \cI(\bar q,\bar \xi)\geq 0$ (respectively $\leq 0$).
\end{theorem}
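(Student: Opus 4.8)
<br>

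The plan is to compute the $2\times 2$ matrix $\mathrm{D}F(\vec{x})$ explicitly in terms of the second derivatives of $\cI$ at the critical point $(\bar q,\bar\xi)$, exactly as was done for complexity $N=0$ in Theorem~\ref{prop9.2}, and then read off the sign of the eigenvalues from the trace. Recall from Lemma~\ref{genf:def} (with $N=1$) that $\cI(q,\xi) = \cG^0(q,\xi) + \cG^1(\xi,q)$, and that fixed points of $F = F_1\circ F_0$ correspond to solutions of $\cI_\xi = \cI_q = 0$. I would first set up the implicit relations defining $F$ near the fixed point: from $\cG^0$ we get $F_0(q,p) = (\xi,\eta)$ with $p = -\cG^0_q(q,\xi)$, $\eta = \cG^0_Q(q,\xi)$; from $\cG^1$ we get $F_1(\xi,\eta') = (Q,P)$ with $\eta' = -\cG^1_q(\xi,Q)$, $P = \cG^1_Q(\xi,Q)$. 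The composite $F = F_1 \circ F_0$ is a fixed point precisely when $\eta = \eta'$, i.e. $\cG^0_Q(q,\xi) = -\cG^1_q(\xi,Q)$ with $Q=q$ (fixed point), which is the equation $\cI_\xi(\bar q,\bar\xi) = 0$; and $p = P$ gives $\cI_q(\bar q,\bar\xi)=0$.

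Next I would differentiate these implicit relations. Treating the fixed-point condition more carefully: $F$ acts by $(q,p)\mapsto(Q,P)$ where $\xi=\xi(q,p)$ is determined by $\cG^0_q(q,\xi) = -p$ and then $Q=Q(q,p)$ by $\cG^1_q(\xi,Q) = -\cG^0_Q(q,\xi)$, and finally $P = \cG^1_Q(\xi,Q)$. Differentiating $\cG^0_q + \cG^0_{qq}\,\mathrm{d}q + \cG^0_{qQ}\,\mathrm{d}\xi = -\mathrm{d}p$ and the analogous relations for the $\cG^1$ equation and for $P$, one obtains a linear system expressing $(\mathrm{d}Q,\mathrm{d}P)$ in terms of $(\mathrm{d}q,\mathrm{d}p)$ whose entries are the second partials of $\cG^0,\cG^1$ at the fixed point. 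After eliminating the intermediate variable $\mathrm{d}\xi$, collecting terms, and using that at the fixed point the relevant second partials of $\cI$ are $\cI_{qq}=\cG^0_{qq}+\cG^1_{QQ}$, $\cI_{\xi\xi}=\cG^0_{QQ}+\cG^1_{qq}$, $\cI_{q\xi}=\cG^0_{qQ}+\cG^1_{qQ}$ (up to checking the conventions, since $\cG^1(\xi,q)$ has its arguments swapped relative to $\cG^1(\xi,Q)$), I expect to arrive at a closed formula for $\mathrm{D}F(\vec x)$ as an explicit matrix divided by $\cI_{\xi\xi}$, analogous to \eqref{eq9.4}. Since $F$ is area-preserving, $\det \mathrm{D}F = 1$, so the eigenvalues are $\lambda,\lambda^{-1}$, and $\lambda>0$ iff $\mathrm{Trace}(\mathrm{D}F)\ge 2$.

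Then the final step is to identify the trace condition with the sign of a determinant. With $\mathrm{D}F(\vec x)$ in hand, $\mathrm{Trace}(\mathrm{D}F) - 2$ should work out, after simplification, to be a constant-sign multiple of $\det\big(\mathrm{Hess}\,\cI(\bar q,\bar\xi)\big)/\cI_{\xi\xi}(\bar q,\bar\xi) = (\cI_{qq}\cI_{\xi\xi} - \cI_{q\xi}^2)/\cI_{\xi\xi}$ — this is the second-variation structure one sees when $\cI$ is reduced to a function of $q$ alone by solving $\cI_\xi=0$ for $\xi=\xi(q)$ (a Lyapunov–Schmidt/partial-minimization reduction): the reduced Hessian is exactly $\det\mathrm{Hess}\,\cI / \cI_{\xi\xi}$. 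I would organize the argument so that ${\rm D}F$ has positive eigenvalues iff this reduced second derivative is nonnegative, matching the $N=0$ statement where $\psi''(\bar q) = \cI''_{\rm reduced}$ controlled the sign. The hypothesis $\cI_{\xi\xi}(\bar q,\bar\xi)\neq 0$ is what makes the reduction and the division legitimate.

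The main obstacle I anticipate is purely bookkeeping: getting all the sign conventions right, because $\cG^1$ enters $\cI$ with swapped arguments ($\cG^1(\xi,q)$ rather than $\cG^1(\xi,Q)$), and because the generating-function relation \eqref{key:for} carries a minus sign ($F(q,-\cG_q) = (Q,\cG_Q)$). One must be careful that $\cI_{q\xi}$, $\cI_{\xi\xi}$ as computed from $\cI(q,\xi)=\cG^0(q,\xi)+\cG^1(\xi,q)$ really do assemble into the entries of $\mathrm{D}F$ with the correct signs; a wrong sign would flip ``positive'' and ``negative'' type. I would guard against this by checking the formula against a degenerate case (e.g. when $F_1$ is close to the identity, so $F \approx F_0$ and the complexity-one formula must reduce to the complexity-zero formula of Theorem~\ref{prop9.2}). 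Beyond that, the computation is routine linear algebra with $2\times2$ matrices and the constraint $\det\mathrm{D}F=1$.
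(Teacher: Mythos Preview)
Your strategy is sound and close to the paper's, but the paper organizes the computation more cleanly and your expected final formula is slightly off. Rather than differentiating the composition $F_1\circ F_0$ directly and eliminating $\mathrm d\xi$, the paper uses the hypothesis $\cI_{\xi\xi}=\cG_{\xi\xi}\ne0$ to solve $\cG_\xi(q,Q;\xi)=0$ for $\xi=\xi(q,Q)$ near $(\bar q,\bar q,\bar\xi)$ and sets $\cT(q,Q)=\cG(q,Q;\xi(q,Q))$. Then $\cT$ is an \emph{ordinary} (complexity-$0$) generating function for $F$, and the matrix formula \eqref{eq9.4} applies verbatim with $\cT$ in place of $\cG$. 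This bypasses the chain-rule bookkeeping you anticipate and makes the reduction to the $N=0$ case literal rather than heuristic.

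Your specific expectation---that $\mathrm{Trace}(\mathrm DF)-2$ comes out as a constant-sign multiple of $\det\mathrm{Hess}\,\cI/\cI_{\xi\xi}$---is not quite right, and this matters because $\cI_{\xi\xi}$ has no a priori sign. What actually happens is that the $\cG_{\xi\xi}=\cI_{\xi\xi}$ factor cancels: one finds $\cT_{qq}+\cT_{QQ}+2\cT_{qQ}=\det(\mathrm D^2\cI)/\cG_{\xi\xi}$ and, using $\cG_{qQ}=0$, also $-\cT_{qQ}=\cG_{q\xi}\cG_{Q\xi}/\cG_{\xi\xi}$ up to sign, so
\[
\mathrm{Trace}(\mathrm DF)-2=\frac{\det(\mathrm D^2\cI)}{-\cG_{q\xi}\cG_{Q\xi}}=\frac{\det(\mathrm D^2\cI)}{-\cG^0_{q\xi}\,\cG^1_{\xi Q}}.
\]
The denominator is positive precisely because $F_0$ is negative monotone ($\cG^0_{q\xi}>0$) and $F_1$ is positive monotone ($\cG^1_{\xi Q}<0$). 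Thus the eigenvalue sign is governed by $\det(\mathrm D^2\cI)$ alone, not by the reduced Hessian $\det(\mathrm D^2\cI)/\cI_{\xi\xi}$. Your Lyapunov--Schmidt intuition is correct as far as it goes (indeed $\cT(q,q)$ equals your reduced $\tilde\psi(q)$), but the passage from $\tilde\psi''$ to $\mathrm{Trace}(\mathrm DF)-2$ introduces the extra factor $1/(-\cT_{qQ})$, and it is the monotonicity hypotheses---not any property of $\cI_{\xi\xi}$---that fix its sign.
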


\begin{proof}
Recall that $\cS(q,Q;\xi)=\cS^0(q,\xi)+\cS^1(\xi,Q)$ and:
$$
\cG_{\xi}(q,Q;\xi)=0 \Rightarrow F(q,\, -\mathcal{G}_q(q,Q;\xi))=(Q,\mathcal{G}_Q(q,Q;\xi)).
$$
Observe that if $cI(\bar q,\bar \xi)=
cG_{\xi\xi}(\bar q,\bar q;\bar \xi)\neq 0$, then near $(\bar q,\bar q,\bar \xi)$,  we can solve 
$\cG_{\xi}(q,Q;\xi)=0$ as $\xi=\xi(q,Q)$. Write
$\cT(q,Q)=\cG(q,Q;\xi(q,Q))$. Then
$
\cT_q=\mathcal{G}_q,\,\,\,\, \cT_Q=\mathcal{G}_Q,
$
and
$
F(q,-\cT_q(q,Q))=(Q, \cT_Q(q,Q)).
$
As a result, we can show
\begin{eqnarray}
{\rm D}F=\frac{1}{-\cT_{qQ}}
\begin{bmatrix}
\cT_{qq} & {1} \\
\cT_{qq}T_{QQ}-\cT_{qQ}^2& \cT_{QQ} \nonumber
\end{bmatrix},
\end{eqnarray}
in the same way we derived \eqref{eq9.4}.
Observe that 
$
{\rm Trace}({\rm D}F)=\frac{\cT_{qq}+\cT_{QQ}}{-\cT_{qQ}}.
$
Since $\cT_{qq}=\cG_{qq}+\cG_{q \xi}\xi_q$,
$\cT_{QQ}=\cG_{QQ}+\cG_{Q\xi }\xi_Q$, 
$\cT_{qQ}=\cG_{qQ}+\cG_{q\xi}\xi_Q$, and
$\cT_{Qq}=\cG_{Qq}+\cG_{Q\xi}\xi_q$, we have that
$$
\cT_{qq}+\cT_{QQ}+2\cT_{qQ}=\cG_{qq}+\cG_{QQ}+2\cG_{qQ}+(\cG_{q\xi}+\cG_{Q\xi })(\xi_q+\xi_Q).
$$
On the other hand, by differentiating the relationship $\cG_\xi(q,Q;\xi(q,Q))=0$, we have
$
\cG_{\xi q}+\cG_{\xi\xi} \xi_q=0$ and $\cG_{\xi Q}+\cG_{\xi\xi}\xi_Q=0$,
or equivalently,
$
\xi_q=-\frac{\cG_{\xi q}}{\cG_{\xi\xi}}$, $\xi_{Q}=-\frac{\cG_{\xi Q}}{\cG_{\xi\xi}}$. In particular,
$
\cG_{\xi q}+\cG_{\xi_Q}+\cG_{\xi\xi}(\xi_q+\xi_Q)=0,
$
which in turn implies
$$
\cT_{qq}+\cT_{QQ}+2\cT_{qQ}=\cG_{qq}+\cG_{QQ}+2\cG_{qQ}-\frac{1}{\cG_{\xi\xi}}(\cG_{q \xi}+\cG_{Q \xi})^2. 
$$
Furthermore, if
$\mathcal{I}(q,\xi)=\cG(q,q;\xi)$, then
$\mathcal{I}_q=\cG_q+\mathcal{G}_Q$, $\mathcal{I}_{\xi}=\cG_{\xi}$,  and
\begin{eqnarray}
{\rm D}^2\mathcal{I}=\begin{bmatrix}
\cG_{qq} +\cG_{QQ}+2\cG_{qQ} & \cG_{\xi Q}+\cG_{\xi q} \\
\cG_{\xi Q}+\cG_{\xi q} & \cG_{\xi \xi} \nonumber
\end{bmatrix}.
\end{eqnarray}
So
$
\cT_{qq}+\cT_{QQ}+2\cT_{qQ}=\frac{{\rm det}({\rm D}^2\mathcal{I})}{\cG_{\xi\xi}}.
$
Also,
$
\cT_{qQ}=\cG_{qQ}-\frac{\cG_{q\xi}\cG_{Q\xi}}{\cG_{\xi \xi}}.
$
Now
\begin{eqnarray} \label{also}
{\rm Trace}({\rm D}F)-2=\frac{\cT_{qq}+\cT_{QQ}+2\cT_{qQ}}{-\cT_{qQ}}=\frac{{\rm det}({\rm D}^2\mathcal{I})}{\cG_{qQ}\cG_{\xi\xi}-
\cG_{q\xi}\cG_{Q\xi}}.
\end{eqnarray}
Recall $\cG(q,Q;\xi)=\cG^0(q,\xi)+\cG^1(\xi,Q)$ with $\cG^0_{q\xi}>0$, and
$\cG^1_{Q\xi}<0$
because $F^0$ is a negative monotone twist and $F^1$ is a positive monotone twist. Hence we obtain 
$-\cG_{q\xi}\cG_{Q\xi}>0$. 
On the other hand $\cG_{qQ}=0$, which simplifies (\ref{also}) to
$$
{\rm Trace}({\rm D}F)-2=\frac{\cT_{qq}+\cT_{QQ}+2\cT_{qQ}}{-\cT_{qQ}}=\frac{{\rm det} ({\rm D}^2\mathcal{I})}{-\cG_{q\xi}\cG_{Q\xi}}. 
$$
This expression has the same sign as ${\rm det}({\rm D}^2\mathcal{I})$. Finally
${\rm D}F$ has positive eigenvalues if and only if ${\rm Trace}({\rm D}F)\geq 2$,  if and only if 
${\rm det}({\rm D}^2 \mathcal{I})\ge 0$, which concludes the proof.
 \end{proof}

\section{\textcolor{black}{Complexity $N=2$ area\--preserving random twists}}

In this section we settle the case $N=2$ in Theorem~\ref{epbt0}.

\subsection{\textcolor{black}{Domain of random generating functions}}

Next we describe the domain of a random generating function associated to
a complexity $N=2$ twist.

\begin{lemma} \label{lem9.2}
 Let $F$ be an area\--preserving random twist of complexity $N=2$.  Suppose that $F$ decomposes as $ F= F_2\circ F_1\circ F_0,
 $
 where $F_1$ is a positive monotone area\--preserving random twist and $F_j$ is negative monotone area\--preserving random twist for $j=0,2$.
 Let $\mathcal{G}^0,\cG^1, \mathcal{G}^N$ be the corresponding generating
 functions. Write $G_i=F^{-1}_i$ and define $Q_i^{\pm}$ and $\hat Q_i^{\pm}$ by
$F_i(q,\pm 1)=(Q_i^{\pm}(q),\pm 1)$ and $G_i(q,\pm 1)=(\hat Q_i^{\pm}(q),\pm 1)$. Then
the function
 $\cI(q,\xi_1,\xi_2):=\cG^0(q,\xi_1)+\cG^1(\xi_1,\xi_2)+\cG^2(\xi_2,q),$
is well-defined on the set
$$D=\left\{(q,\xi_1,\xi_2) \,\,\,|\,\,\, Q_0^+(q)\leq \xi_1\leq Q_0^-(q),\ \   \hat Q_2^-(q)\leq \xi_2\leq \hat Q_2^+(q)\right\},$$
Moreover, if $(q,\xi_1,\xi_2)\in D$, then $Q^-_1(\xi_1)<\xi_2< Q^+_1(\xi_1).$
\end{lemma}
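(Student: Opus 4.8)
Throughout, fix $\o$; everything below is pointwise in $\o$. The plan is to reduce the claim to elementary bookkeeping about the boundary maps of the three monotone factors, and then to invoke the fact that the composite $F$ is itself a twist. Recall that for a monotone twist $M$ with boundary maps $M(q,\pm1)=(Q_M^\pm(q),\pm1)$ the domain of its generating function $\cG^M(q,Q)$ is the closed region bounded by the graphs of $q\mapsto Q_M^+(q)$ and $q\mapsto Q_M^-(q)$ (this is \eqref{abar} read through the description of $D_0,D_1$ in Lemma~\ref{above}); for a negative monotone twist this region contains the diagonal since $Q_M^+(q)<q<Q_M^-(q)$, and for a positive one since $Q_M^-(q)<q<Q_M^+(q)$. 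Also, since $G_i=F_i^{-1}$, applying $F_i$ to $G_i(q,\pm1)=(\hat Q_i^\pm(q),\pm1)$ gives $Q_i^\pm\circ\hat Q_i^\pm={\rm id}$, so $\hat Q_i^\pm=(Q_i^\pm)^{-1}$; in particular $Q_i^\pm$ and $\hat Q_i^\pm$ are increasing.

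First I would note that well\--definedness of $\cI$ on $D$ amounts to the three membership conditions $(q,\xi_1)\in{\rm Dom}(\cG^0)$, $(\xi_1,\xi_2)\in{\rm Dom}(\cG^1)$ and $(\xi_2,q)\in{\rm Dom}(\cG^2)$. By the description above the first is $Q_0^+(q)\le\xi_1\le Q_0^-(q)$, which is the first inequality defining $D$; the third is $Q_2^+(\xi_2)\le q\le Q_2^-(\xi_2)$, and solving for $\xi_2$ using that $Q_2^\pm$ are increasing with $\hat Q_2^\pm=(Q_2^\pm)^{-1}$ turns it into $\hat Q_2^-(q)\le\xi_2\le\hat Q_2^+(q)$, the second inequality defining $D$. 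So the real content is the second condition $Q_1^-(\xi_1)\le\xi_2\le Q_1^+(\xi_1)$, i.e. (a non\--strict form of) the ``Moreover'' assertion, and this is the step I expect to carry the proof.

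To prove the ``Moreover'' part I would restrict $F=F_2\circ F_1\circ F_0$ to the lines $p=\pm1$, which is legitimate because each $F_i$ maps $\{p=\pm1\}$ to itself by boundary invariance; writing $Q_F^\pm$ for the boundary maps of $F$, this gives $Q_F^\pm=Q_2^\pm\circ Q_1^\pm\circ Q_0^\pm$, hence the key identity
\[
Q_1^\pm\circ Q_0^\pm=\hat Q_2^\pm\circ Q_F^\pm .
\]
Now take $(q,\xi_1,\xi_2)\in D$. Since $F$ is an area\--preserving random twist it satisfies the strict boundary twisting $Q_F^-(q)<q<Q_F^+(q)$, so using monotonicity of all the maps, the membership $Q_0^+(q)\le\xi_1\le Q_0^-(q)$ and the bounds $\hat Q_2^-(q)\le\xi_2\le\hat Q_2^+(q)$,
\[
\xi_2\le\hat Q_2^+(q)<\hat Q_2^+(Q_F^+(q))=Q_1^+(Q_0^+(q))\le Q_1^+(\xi_1),
\]
\[
\xi_2\ge\hat Q_2^-(q)>\hat Q_2^-(Q_F^-(q))=Q_1^-(Q_0^-(q))\ge Q_1^-(\xi_1),
\]
which is $Q_1^-(\xi_1)<\xi_2<Q_1^+(\xi_1)$; in particular $(\xi_1,\xi_2)$ lies in the interior of ${\rm Dom}(\cG^1)$, so all three summands of $\cI$ are defined and $\op{C}^1$ on $D$.

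The only part that is more than bookkeeping is pinpointing where the hypothesis that $F$ is a twist enters: the three domain inclusions hold individually for trivial reasons, but it is exactly the strict inequalities $Q_F^-(q)<q<Q_F^+(q)$ that splice them into the compatible chain above. I would also make sure that ``increasing'' for the boundary maps is meant strictly --- it is, since each $F_i$ and $F$ is a diffeomorphism of $\cS$ --- so that the inversions $\hat Q_i^\pm=(Q_i^\pm)^{-1}$ are valid and the displayed inequalities are strict.
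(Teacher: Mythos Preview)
Your proof is correct and follows essentially the same approach as the paper: both reduce the claim to the middle domain condition for $\cG^1$, derive the boundary identity linking $Q_1^\pm$ to $\hat Q_2^\pm$, $Q_F^\pm$ and $Q_0^\pm$ (the paper writes it as $Q_1^\pm=\hat Q_2^\pm\circ Q^\pm\circ\hat Q_0^\pm$ from $F_1=G_2\circ F\circ G_0$, which is your identity precomposed with $\hat Q_0^\pm$), and then chain the monotonicity of the boundary maps together with the strict twist inequality $\pm Q_F^\pm(q)>\pm q$. Your presentation is slightly cleaner in that it runs the inequality chain in one direction without the $\pm$ bookkeeping, but the substance is identical.
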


\begin{proof} Since $F_1=G_2\circ F\circ G_0$, we have
\begin{equation}\label{eq9.30}
\hat Q_2^{\pm}\circ Q^{\pm}\circ \hat Q^{\pm}_0=Q_1^{\pm},
\end{equation}
where $Q^\pm$ are defined by the relationship  $F(q,\pm 1)=(Q^\pm(q),\pm 1)$. On the set $D$,  
$\cG^0(q,\xi_1)$ and $\cG^2(\xi_2,q)$ are well defined. It is sufficient to check that if $(q,\xi_1,\xi_2)\in D$, then 
$\cG^1(\xi_1,\xi_2)$ is well-defined. That is, 
$
Q^-_1(\xi_1)<\xi_2< Q^+_1(\xi_1).
$
To see this observe that by \eqref{eq9.30},
\begin{eqnarray*}
\pm Q_1^{\pm}(\xi_1)=\pm\left(\hat Q_2^{\pm}\circ Q^{\pm}\circ \hat Q^{\pm}_0\right)(\xi_1) 
\geq \pm\left(\hat Q_2^{\pm}\circ Q^{\pm} \right)(q)> \pm\hat Q_2^{\pm}(q)\geq \pm\xi_2,
\end{eqnarray*} 
as desired. Here for the first inequality we used the fact that $Q^\pm$ and $\hat Q^\pm_2$ 
are increasing and that in $D$, we have 
 $\hat Q_0^-(\xi_1)\leq q \leq \hat Q_0^+(\xi_1);$
for the second inequality we used 
 $\pm Q^{\pm}(q)> \pm q,$
which concludes the proof.
\end{proof}

We define $B_0^\pm(\o),B_2^\pm(\o)> 0,$ by 
$
Q_0^\pm(q)=q\mp B^\pm_0(\tau_q\o)$ and 
$\hat Q_2^\pm(q)=q\pm B_2^\pm(\tau_q\o)$. Let
\begin{eqnarray} \label{mapKX}
K(q,p;\o)=\bar K(\tau_q\o,p)=\cI(q,\xi(q,p))=\bar\cI(\tau_q\o,q+\bar\xi(\tau_q \o,p)),
\end{eqnarray}
where $p=(p_1,p_2)$, $\bar\xi( \o,p)=(\bar\xi_1(\o,p_1),\bar\xi_2(\o,p_2)),$ 
$\xi(q,p)=(q+\bar\xi_1(\tau_q\o,p_1),q+\bar \xi_2(\tau_q\o,p_2)),
$ 
and $\bar\xi_1$ and $\bar\xi_2$ are defined  by
$
\bar\xi_1(\o,p_1):=\frac {p_1+1}2B^-_0(\o)+  \frac {p_1-1}2B^+_0(\o)$ and
$\bar\xi_2(\o,p_2):=\frac {p_2+1}2B^+_{2}(\o)+
\frac {p_2-1}2B^-_{2}(\o)$.

\begin{lemma} \label{lem9.2} Let $K:\bR\times [-1,1]^2\times\O\to\bR$ be as in \eqref{mapKX}.
The following hold:
\begin{enumerate}[{\rm (i)}]
\item \label{x}
 There exists a one-to-one correspondence between critical points of $\cI$ and $K$. 
\item \label{xx}
The vector $\nabla K$ is pointing inward on the boundary of $\bR\times [-1,1]^2$. 
\end{enumerate}
\end{lemma}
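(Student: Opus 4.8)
The plan is to recognise $K$ as the pull-back of $\cI$ under an explicit fibrewise-affine change of variables. Under that identification item~(\ref{x}) becomes the statement that the change of variables is a diffeomorphism, and item~(\ref{xx}) reduces to computing the sign of the components of $\nabla\cI$ on the boundary faces of its domain $D$ (the $D$ from the companion lemma above), exactly in the spirit of Lemma~\ref{gradlem}.

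First I would set $\Phi(q,p):=(q,\xi(q,p))$ with $\xi(q,p)=(q+\bar\xi_1(\tau_q\o,p_1),\,q+\bar\xi_2(\tau_q\o,p_2))$ as in \eqref{mapKX}, so that $K=\cI\circ\Phi$. For fixed $q$ and $\o$ the maps $p_1\mapsto\xi_1$ and $p_2\mapsto\xi_2$ are affine with strictly positive slopes $\tfrac12(B_0^-+B_0^+)(\tau_q\o)$ and $\tfrac12(B_2^++B_2^-)(\tau_q\o)$, and they carry $p_1=-1,+1$ onto $\xi_1=Q_0^+(q),Q_0^-(q)$ and $p_2=-1,+1$ onto $\xi_2=\hat Q_2^-(q),\hat Q_2^+(q)$, i.e. onto the two sides of the nondegenerate rectangle that is the $q$-slice of $D$. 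Hence $\Phi$ is a ${\rm C}^1$-diffeomorphism of $\bR\times[-1,1]^2$ onto $D$. Since ${\rm D}\Phi$ is invertible everywhere, ${\rm D}K(x)={\rm D}\cI(\Phi(x))\,{\rm D}\Phi(x)$ vanishes exactly when ${\rm D}\cI(\Phi(x))$ does, so $x\mapsto\Phi(x)$ is the bijection of critical points asserted in (\ref{x}); once (\ref{xx}) is proved no critical point sits on $\partial(\bR\times[-1,1]^2)$, so this is a correspondence of interior critical points.

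For (\ref{xx}), differentiating $K=\cI\circ\Phi$ and using that $\xi_2$ is independent of $p_1$ and $\xi_1$ of $p_2$ gives $K_{p_1}=\cI_{\xi_1}(\Phi)\cdot\tfrac12(B_0^-+B_0^+)(\tau_q\o)$ and $K_{p_2}=\cI_{\xi_2}(\Phi)\cdot\tfrac12(B_2^++B_2^-)(\tau_q\o)$; the two prefactors being strictly positive, on each face $\{p_i=\pm1\}$ the sign of $K_{p_i}$ equals that of $\cI_{\xi_i}$ at the corresponding boundary point of $D$, and it suffices to check it is inward. Writing $\cI=\cG^0(q,\xi_1)+\cG^1(\xi_1,\xi_2)+\cG^2(\xi_2,q)$ and using the generating-function identities of Lemma~\ref{genf:def}, one obtains, as in Lemma~\ref{gradlem}, $\cI_{\xi_1}=P_0-\pi_1$ and $\cI_{\xi_2}=P_1-\pi_2$, where along the broken orbit $q\to\xi_1\to\xi_2\to q$ determined by $(q,\xi_1,\xi_2)$ the symbol $\pi_j$ is the momentum entering $F_j$ and $P_j$ the momentum leaving $F_j$ (these are not the coordinates $p_1,p_2$ of $K$). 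On the face $\xi_1=Q_0^{\pm}(q)=Q_0(q,\pm1)$, strict monotonicity of $\mu\mapsto Q_0(q,\mu)$ forces $\pi_0=\pm1$, hence $P_0=\pm1$ by boundary invariance of $F_0$; and the strict nesting $Q_1^-(\xi_1)<\xi_2<Q_1^+(\xi_1)$ from the companion lemma, together with monotonicity of $\mu\mapsto Q_1(\xi_1,\mu)$, forces $\pi_1\in(-1,1)$. Thus $\cI_{\xi_1}=\pm1-\pi_1$ has sign $\pm1$, which is exactly the inward sign of $K_{p_1}$ on $\{\xi_1=Q_0^{\pm}(q)\}=\{p_1=\mp1\}$. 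Symmetrically, on $\xi_2=\hat Q_2^{\pm}(q)$ the defining relations $G_2(q,\pm1)=(\hat Q_2^{\pm}(q),\pm1)$ force $\pi_2=\pm1$, while $Q_1^-(\xi_1)<\xi_2<Q_1^+(\xi_1)$ forces $\pi_1\in(-1,1)$, whence $P_1\in(-1,1)$ since $F_1$ maps the open strip into itself and its boundary onto its boundary; so $\cI_{\xi_2}=P_1\mp1$ has sign $\mp1$, again the inward sign of $K_{p_2}$ on $\{\xi_2=\hat Q_2^{\pm}(q)\}=\{p_2=\pm1\}$. This gives (\ref{xx}).

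The step I expect to be the main obstacle is precisely this boundary analysis: one must keep track, face by face, of which of the three entering momenta $\pi_0,\pi_1,\pi_2$ is pinned to $\pm1$ and which stays strictly inside $(-1,1)$, and then convert the strict inequalities $Q_1^-(\xi_1)<\xi_2<Q_1^+(\xi_1)$ of the companion lemma into strict — not merely weak — sign information for $\cI_{\xi_1}$ and $\cI_{\xi_2}$. The change of variables and the generating-function bookkeeping are routine and parallel the cases $N=0,1$ (Proposition~\ref{easy:lem4}, Proposition~\ref{prop9.1}, Lemma~\ref{gradlem} and Lemma~\ref{gradlem2}).
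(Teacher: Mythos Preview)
Your proposal is correct and follows essentially the same route as the paper: compute $K_{p_i}$ via the chain rule, observe that the affine prefactors $\tfrac12(B_0^++B_0^-)$ and $\tfrac12(B_2^++B_2^-)$ are strictly positive (giving the bijection of critical points), and then determine the sign of $\cI_{\xi_i}$ on each boundary face using $\cI_{\xi_1}=P^0-p^1$, $\cI_{\xi_2}=P^1-p^2$ together with the strict nesting $Q_1^-(\xi_1)<\xi_2<Q_1^+(\xi_1)$ from the companion lemma. Your bookkeeping of which momentum is pinned to $\pm1$ and which lies strictly in $(-1,1)$ matches the paper's line by line; the only cosmetic difference is that you package the change of variables as a single diffeomorphism $\Phi$ and invoke $DK=D\cI\circ D\Phi$, whereas the paper writes out the three partial derivatives of $K$ explicitly.
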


\begin{proof}
Evidently $K(q,p_1,p_2)=K(q,p;\o)$ satisfies
\begin{eqnarray}\label{eq:33}
\left\{ \begin{array}{rl}
 K_{p_1}(q,p_1,p_2) = \frac 12\cI_{\xi_1}(q,\xi(q,p))\left(B_0^++   B_0^-\right)(\tau_q\o),\\
 K_{p_2}(q,p_1,p_2) = \frac 12 \cI_{\xi_2}(q,\xi(q,p))\left( B_2^++   B_2^-\right)(\tau_q\o),\\
K_{q}(q,p_1,p_2)= \cI_{q}(q,\xi(q,p))+\cI_{\xi_1}(q,\xi(q,p))+\cI_{\xi_2}(q,\xi(q,p))\\
+\cI_{\xi_1}(q,\xi(q,p))\left(\frac {p_1+1}2\nabla B^-_0+  \frac {p_1-1}2
\nabla B^+_0\right)(\tau_q\o)\\
+\cI_{\xi_2}(q,\xi(q,p))\left(\frac {p_1+1}2\nabla B^-_2+  \frac {p_1-1}2
\nabla B^+_2\right)(\tau_q\o).
          \end{array} \right . 
\end{eqnarray}
It follows from \eqref{eq:33} that there exists a one-to-one correspondence between the critical points of $\cI$ and $K$
because $B^\pm_i>0$ for $i=0,2$.  This proves \eqref{x}.

We now examine the behavior of $K$ across the boundary.
Observe that the functions $K_{p_1}$ and $\cI_{\xi_1}$
(respectively  $K_{p_2}$ and $\cI_{\xi_2}$) have the same sign. Moreover,
\begin{eqnarray}
p_1&=&\pm 1\Leftrightarrow \xi_1=Q_0^{\mp}(q),\nonumber \\
p_2&=&\pm 1\Leftrightarrow q=Q_2^{\pm}(\xi_2). \nonumber
\end{eqnarray}
It remains to verify  
\begin{align}
\xi_1&=Q_0^{\mp}(q)\Rightarrow  \pm\cI_{\xi_1}< 0, \nonumber \\
q&=Q_2^{\pm}(\xi_2)\Rightarrow \pm\cI_{\xi_2}< 0 . \nonumber
\end{align}
Let us write $\xi_0$ for $q$ and $\xi_3$ for $Q$. We define functions $p^i(\xi_i,\xi_{i+1})$ and 
$P^i(\xi_i,\xi_{i+1})$ by
$
F^i\left(\xi_i,p^i(\xi_i,\xi_{i+1})\right)=\left(\xi_{i+1}, P^i(\xi_i,\xi_{i+1})\right).
$
We then have
$\cI_{\xi_1}=\cG^0_{Q}+\cG^1_q=P^0-p^1$ 
and $\cI_{\xi_2}=\cG^1_{Q}+\cG^2_q=P^1-p^2$. Finally we assert,
\begin{align*}
p_1&=\pm 1\Rightarrow \xi_1=Q^{\mp}_0(q)\Rightarrow p^0=P^0=\mp1\Rightarrow \pm\cI_{\xi_1}<0,\\
p_2&=\pm 1\Rightarrow \xi_2=\hat Q^{\pm}_2(q)\Rightarrow p^2=P^2=\pm1\Rightarrow \pm\cI_{\xi_2}<0,
\end{align*}
as desired. Here we are using the fact that if $p^0=P^0=\mp1$ or $p^2=P^2=\pm1$, then 
$Q^-_1(\xi_1)<\xi_2< Q^+_1(\xi_1)$ or equivalently $p^1,P^1\notin\{-1,1\}.$
\end{proof}

\subsection{\textcolor{black}{Fixed points}}

 The following result implies the complexity $N=2$ statement in Theorem~\ref{epbt0}. The proof of  
 is sketched because it is similar to that of Theorem~\ref{keyresult}.
  
 \begin{theorem}  \label{keyresult2}
 Let $K:\bR\times [-1,1]^2\times\O\to\bR$, and
$
K(q,\,p;\o):=\bar K(\tau_q\omega,\,p)
$
be ${\rm C}^1$ up to the boundary with $\nabla K$ pointing inwards on the boundary. Then
\begin{enumerate}[{\rm (a)}]
\item\label{xxa}
 $K$ has infinitely many critical points. 
 \item \label{xxb}
 The critical points of $K$ occur as follows:
 \begin{itemize}
 \item[{\rm (1)}]
 Either $K$ has a continuum of critical points;
 \item[{\rm (2)}]
 Or $K$ has both infinitely many local maximums, and infinitely many saddle points or  local minimums.
 \end{itemize}
 \end{enumerate}
 \end{theorem}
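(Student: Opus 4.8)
The plan is to follow closely the structure of the proof of Theorem~\ref{keyresult}, adapting each step from the one-dimensional fibre $[-1,1]$ to the two-dimensional fibre $[-1,1]^2$. First I would establish the existence part \eqref{xxa}. Let $\hat K(\o):=\max_{p\in[-1,1]^2}\bar K(\o,p)$, which is well defined and continuous since $[-1,1]^2$ is compact and $\bar K$ is continuous up to the boundary. By ergodicity of $\{\tau_a\}$, either $\hat K$ is almost surely constant or $q\mapsto\hat K(\tau_q\o)$ oscillates almost surely. In the constant case, for almost every $\o$ pick a maximizer $p(\o)\in[-1,1]^2$; since $\nabla K$ points strictly inward on the boundary of $\bR\times[-1,1]^2$, no boundary point can be a local maximum of $K$, so $p(\o)$ lies in the interior, and $\{(q,p(\tau_q\o))\,\,|\,\,q\in\bR\}$ is a continuum of critical points. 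In the oscillating case, every local maximum of $q\mapsto\hat K(\tau_q\o)$ yields an interior local maximum of $K$, and by the same argument as in Proposition~\ref{prop:key} there are infinitely many of them. Either way $K$ has infinitely many critical points, giving \eqref{xxa}.

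For part \eqref{xxb} I would reproduce the gradient-flow argument of Theorem~\ref{keyresult}. Set $\O_0:=\{\o\,\,|\,\,\{\tau_a\o\,\,|\,\,a>a_0\}\text{ is dense for every }a_0\}$, which has full measure by ergodicity and $\bP$-positivity. Fix $\o\in\O_0$ and consider the gradient flow $\phi^t$ of the system $q'(t)=\bar K_\o(\tau_{q(t)}\o,p(t))$, $p'(t)=\bar K_p(\tau_{q(t)}\o,p(t))$ with $q(0)=0$, $p(0)=a$; along trajectories $\frac{\rm d}{{\rm d}t}\bar K(\tau_{q(t)}\o,p(t))=|\nabla\bar K|^2\geq 0$. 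Since $\nabla K$ points inward on the boundary, $\cS':=\bR\times[-1,1]^2$ is forward invariant under this flow, so trajectories do not escape through the boundary. There are two cases. If for some $a$ the function $q(t)$ is unbounded as $t\to\infty$, then $\tau_{q(t)}\o$ becomes dense in $\Omega$ and the same compactness-plus-monotonicity argument as in Case~1 of Theorem~\ref{keyresult} produces, for $\bP$-almost every limiting environment $\overline\o$, a point $\overline p(\overline\o)$ with $\nabla K(\overline\o,\overline p)=0$, hence a continuum of critical points. If instead $q(t,\o)$ stays bounded for every $\o\in\O_0$, I argue as in Case~2: fix an interior local maximum $\bar x=(\bar q,\bar p)$, take a level set $\gamma$ of $K$ at a value $c<K(\bar x)$ close to $K(\bar x)$ on which $K$ has no critical point, so that (since the backward flow stays bounded and cannot return to $\bar x$ through a critical-point-free level set) $\gamma$ bounds a region whose backward orbits must eventually exit the strip through the top or bottom face.

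The continuity of the resulting map $\Gamma\colon\gamma\to(\bR\times\partial([-1,1]^2))$ defined by $\Gamma(a)=\phi^{t_a}(a)$ follows exactly as in Theorem~\ref{keyresult}, using continuous dependence of the flow on initial data and the fact that the flow crosses the boundary transversally (because $\nabla K$ is strictly inward there). The topological obstruction now is that $\gamma$, being a level set of a smooth function on the three-dimensional manifold $\bR\times[-1,1]^2$ enclosing $\bar x$ and containing no critical point, is a $2$-sphere, whereas $\bR\times\partial([-1,1]^2)$ is homeomorphic to $\bR\times S^1$ (an infinite cylinder); a continuous injective image of $S^2$ cannot lie in a surface that contains no embedded $S^2$, so $\Gamma$ cannot be globally defined on all of $\gamma$. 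Hence for some $a\in\gamma$ the backward orbit $\phi^t(a)$ stays in the strip for all $t\le 0$ and, being bounded with increasing $K$, accumulates at an interior critical point $z$ that is not a local maximum (a saddle or local minimum), and $z$ is forced away from the $q$-slab of $\bar x$. Iterating this between consecutive local maxima yields infinitely many saddle points or local minima, which proves \eqref{xxb}.

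I expect the main obstacle to be making the topological step rigorous in the higher-dimensional fibre: in Theorem~\ref{keyresult} the level set $\gamma$ was a circle and the target was a disjoint union of lines, so the non-embedding statement was elementary, whereas here one must justify that the level set enclosing $\bar x$ is indeed a $2$-sphere (using that $c$ is close to the nondegenerate-looking maximum value and that the enclosed region contains no other critical point, e.g. by a deformation-retract/Morse-theoretic argument) and that it cannot embed continuously into $\bR\times S^1$. A clean way is to note that $\bR\times S^1$ has $H_2=0$ while the image of $[\gamma]$ would have to be a nonzero class, or simply that removing a point from $S^2$ gives something simply connected while $\Gamma(\gamma)$, if it were all of $S^2$'s image inside the cylinder, would have to separate the cylinder — a contradiction with the cylinder having connected complement of any embedded sphere. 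Since the theorem statement says the proof is only sketched, I would present the argument at this level of detail, referring back to Theorem~\ref{keyresult} for the parts that transfer verbatim (the construction and continuity of $\Gamma$, the boundedness dichotomy, and the final iteration over local maxima).
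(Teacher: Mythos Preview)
Your proposal is correct and follows essentially the same approach as the paper: the dichotomy via the gradient flow, the level set $\gamma$ near an isolated local maximum, the backward-flow map $\Gamma$ into $\bR\times\partial([-1,1]^2)$, and the topological obstruction. One simplification worth noting: you do not need to argue that $\gamma$ is a $2$-sphere. It suffices that $\gamma$ is a closed (compact, boundaryless) surface, which is automatic for a regular level set; by invariance of domain any embedding of a closed $2$-manifold into the connected non-compact surface $\bR\times S^1$ would be open, hence all of $\bR\times S^1$, a contradiction. This is exactly the level of detail the paper uses, so the Morse-theoretic justification you flag as the ``main obstacle'' is unnecessary.
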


\begin{proof} We prove (\ref{xxb}). As in the proof of Theorem~\ref{keyresult}, we assume that $K$ does not have a continuum of critical points and deduce that $K$ has infinitely many isolated local maximums. The 
$q$ component of the flow remains bounded
almost surely. We take a local maximum $a$ and a connected component $\g$ of a level set of $K$ 
associated with a regular value $c$ of $K$, very close to the value $K(a)$. The surface $\g$ is an oriented
 closed manifold and if $K$ has no other type of critical point, then
$\G:\g\to \bR\times\partial [-1,1]^2,$ is a homeomorphism from $\g$ onto its image. Since the set
$\bR \times \partial [-1,1]^2$ cannot contain a homeomorphic image of $\g$, we arrive at a contradiction. From this 
we  deduce the conclusion of the theorem as in the proof of Theorem~\ref{keyresult}.
\end{proof}

\section{\textcolor{black}{Complexity $N\geq 3$ area\--preserving random twists}} \label{ergsec2}
 
 We prove the $N\geq 3$ case of Theorem~\ref{epbt0}, item (\ref{+2}). The results in
 Sections~\ref{t1} and \ref{t2} hold for general $N\geq 0$. The other results
 use  that $N$ is at least $3$.
 
\subsection{\textcolor{black}{Geometry of the domain of the generating function}}   \label{t1} 
 
 Let $F$ be an area\--preserving random twist of complexity $N$. 
As in Theorem~\ref{dec}, we assume that $N$ is an odd number and that $F$ decomposes as
in \eqref{eq4.1}. Recall that $\mathcal{G}^0,\dots, \mathcal{G}^N$ denote the generating
 functions, respectively, of the monotone twists $F_0,\dots, F_N.$ 
 Set 
$$\cI(q,\xi)=\cG(q,q;\xi)=\cL(\tau_q\o,0;\xi-q),\ \ \ \cI'(q,\eta)=\cI(q,\eta+q)=:\bar\cI(\tau_q\o,\eta),$$ 
where $\cG$ and $\cL$ are defined by Lemma~\ref{genf:def}, and $\eta+q=(\eta_1+q,\dots,\eta_N+q)$.  
Given a realization $\o$, we write $D=D(\o)$ for the domain of the definition of $\cI$. We also set
$D'(\o)=\{\eta\in\bR^N\ |\ (0,\eta)\in D(\o)\}$ so that the domain of the function $\cI'$ is exactly
$\{(q,\eta)\ |\ \eta\in D'(\tau_q\o)\}$.
To simplify the notation, we write $\xi_0$ for $q$ and $\xi_{N+1}$ for $Q$.
In this way, we can write
$
F^i(\xi_i, p^i)=(\xi_{i+1},P^i),
$
where 
$p^i=p^i(\xi_i,\xi_{i+1})=-\cG_q^i(\xi_i,\xi_{i+1})$ and
$P^i=P^i(\xi_i,\xi_{i+1})=\cG_Q^i(\xi_i,\xi_{i+1})$. 
Here by $\cG_q^i$ and $\cG_Q^i$ we mean the partial 
derivatives of $\cG^i$ with respect to its first and second arguments respectively.
As before, we write $G^i$ for the inverse of $F^i$ and define increasing functions $Q_i^\pm$ and $\hat Q^\pm_i$ by
$
F^i(q,\pm 1)=(Q^\pm_i(q),\pm 1)$ and
$G^i(q,\pm 1)=(\hat Q^\pm_i(q),\pm 1)$. Let
\begin{align}
E(\xi_1,\xi_N)=& \bigcap_{i=1}^{N-1}\Big\{(\xi_2,\dots,\xi_{N-1})\, |\, (-1)^{i+1}Q^-_{i}(\xi_i)\leq (-1)^{i+1}\xi_{i+1}\leq 
 (-1)^{i+1}Q^+_{i}(\xi_i) \Big\}. \nonumber
\end{align}
Then the set $D$ consists of points $(q,\xi)$ such that
$\xi_1\in[Q^+_0(q),Q^-_0(q)]$, $\xi_{N}\in[\hat Q^+_N(q),\hat Q^-_N(q)]$ and 
$(\xi_2,\dots,\xi_{N-1})\in E(\xi_1,\xi_N)$. Alternatively, we can write
\begin{align*}
E(\xi_1,\xi_N)=\bigcap_{i=1}^{N-1}\Big\{(\xi_2,\dots,\xi_{N-1})\, |\, -1\leq p^i(\xi_i,\xi_{i+1}),\,\,\,  P^i(\xi_i,\xi_{i+1})\leq 1 \Big\}.
\end{align*}
We write $\partial D=\partial^+ D\cup\partial^- D,$ where
$\partial^+ D$ and $\partial^- D$ represent the upper and lower boundaries of $D$. Then
$
\partial^+ D=\bigcup_{i=0}^N\partial^+_i D$ and  $\partial^- D=\bigcup_{i=0}^N\partial^-_i D$, 
where
\begin{align*}
\partial^\pm_0 D&=\left\{(q,\xi)\in D\, |\, \xi_1=Q^\mp_0(q)\right\} 
=\left\{(q,\xi)\in D\, |\, p^0(q,\xi_1)=P^0(q,\xi_1)=\mp 1\right\},\\
\partial^\pm_N D&=\left\{(q,\xi)\in D\, |\, \xi_N=\hat Q^\mp_N(q)\right\}
=\left\{(q,\xi)\in D\, |\, p^N(\xi_N,q)=P^N(\xi_1,q)=\mp 1\right\}, \\
\partial^\pm_i D&=\left\{(q,\xi)\in D\, |\, \xi_{i+1}=Q^\pm_i(\xi_{i}) \right\} \
{\text{ for $i$ odd and  }}1< i<N,\\
\partial^\pm_i D&=\left\{(q,\xi)\in D\, |\, \xi_{i}=\hat Q^\pm_i(\xi_{i+1}) \right\} \
{\text{ for $i$ even and  }}1< i<N.
\end{align*}
We also write $\partial_i D=\partial^+_i D\cup \partial^-_i D$,
$\bar\partial^\pm_i D=\partial^\pm_i D\setminus\left(\partial^\pm_{i-1} D\cup\partial^\pm_{i} D\right)$,
$\bar\partial^\pm_0 D=\partial^\pm_0 D\setminus\left(\partial^\pm_{1} D\cup\partial^\pm_{N} D\right)$,
$\bar\partial^\pm_N D=\partial^\pm_N D\setminus\left(\partial^\pm_{0} D\cup\partial^\pm_{N-1} D\right)$.

\subsection{The gradient} \label{t2}

Next examine the behavior of $\nabla\cI$ across the boundary. The randomness of $D(\o)$
and $\cI$ play no role and the proof is analogous in the periodic case (\cite{Go2001}).

\begin{prop} \label{above'}
 Let $ F= F_N\circ \dots\circ F_1\circ F_0$ be an area\--preserving random 
 twist decomposition as in \eqref{eq4.1}.
Then the following properties hold.
\begin{enumerate}[{\rm (P.i)}]
\item \label{item:i}
If $ 1< i <N$ is even, $\nabla \cI$ 
 is inward  along $\bar\partial^\pm_iD$;
\item \label{item:ii}
If $ 1< i <N$ is odd, $\nabla \cI$ 
 is outward  along $\bar\partial^\pm_iD$;
\item \label{item:iii}
$\nabla \cI$ 
 is outward  along $\bar\partial^\pm_N D$;
\item \label{item:iv}
 $\nabla \cI$ 
 is inward  along $\bar\partial^\pm_0D$.
\end{enumerate}
\end{prop}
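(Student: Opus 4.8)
The plan is to compute $\nabla\cI$ explicitly in terms of the intermediate data of the factors $F^i$ and then, one boundary face at a time, determine the sign of its pairing with an outward conormal of $D$; as the statement notes, the randomness and the exact shape of $D(\o)$ play no role, and this is the argument used in the periodic case \cite{Go2001}. First I would fix notation: write $\xi_0=q$, $\xi_{N+1}=Q$ (with $Q=q$ on the domain of $\cI$), and for each $i$ put $p^i=-\cG^i_q(\xi_i,\xi_{i+1})$ and $P^i=\cG^i_Q(\xi_i,\xi_{i+1})$, so that $F^i(\xi_i,p^i)=(\xi_{i+1},P^i)$. Differentiating $\cI(q,\xi)=\cG^0(q,\xi_1)+\sum_{j=1}^{N-1}\cG^j(\xi_j,\xi_{j+1})+\cG^N(\xi_N,q)$ then yields $\cI_{\xi_i}=\cG^{i-1}_Q+\cG^i_q=P^{i-1}-p^i$ for $1\le i\le N$, and $\cI_q=\cG^0_q+\cG^N_Q=P^N-p^0$. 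Thus every partial of $\cI$ is the difference of an outgoing and an incoming momentum at the corresponding node, and $\cI_\xi=0$ is precisely the matching conditions of Lemma~\ref{genf:def}.

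Next I would realize each face as a sublevel set inside $D$ and compute an outward conormal. For $1<i<N$ with $i$ odd, $\partial^+_iD=\{\xi_{i+1}=Q^+_i(\xi_i)\}$ is cut out in $D$ by $\xi_{i+1}-Q^+_i(\xi_i)\le 0$, so $\nu=e_{\xi_{i+1}}-(Q^+_i)'(\xi_i)e_{\xi_i}$ is outward, with $(Q^+_i)'(\xi_i)>0$ because $Q^+_i$ is increasing. On this face boundary invariance of $F^i$ forces $p^i=P^i=+1$, so $\cI_{\xi_{i+1}}=1-p^{i+1}\ge 0$, with equality exactly on $\partial^+_{i+1}D$, and $\cI_{\xi_i}=P^{i-1}-1\le 0$, with equality exactly on $\partial^+_{i-1}D$. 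Hence on the reduced face $\bar\partial^+_iD$ (its two neighbours $\partial^+_{i\pm1}D$ removed) one has $\cI_{\xi_{i+1}}>0>\cI_{\xi_i}$, so $\langle\nabla\cI,\nu\rangle=\cI_{\xi_{i+1}}-(Q^+_i)'(\xi_i)\cI_{\xi_i}>0$: $\nabla\cI$ is outward. The face $\partial^-_iD$ is the same computation with all signs reversed.

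The remaining cases are this computation with the parity and the endpoints accounted for. For $1<i<N$ with $i$ even, $F^i$ is negative monotone, so the face $\{p^i=+1\}$ sits at the \emph{lower} end of the $\xi_{i+1}$-range; this reverses the defining inequality, hence the conormal, hence the sign, giving $\langle\nabla\cI,\nu\rangle<0$, i.e.\ inward — this is (P.i), and (P.ii) is the odd case just done. For $i=N$ (odd) and $i=0$ (even) the variable $q$ takes over the role of $\xi_{i+1}$, resp.\ $\xi_i$, through $\cI_q=P^N-p^0$; the one extra wrinkle is that $F^0$ and $F^N$ reverse orientation between their momentum and the variable ($\xi_1$, resp.\ $\xi_N$) indexing the corresponding face, so the $\pm$ labels on $\partial^\pm_0D$ and $\partial^\pm_ND$ are flipped relative to the interior convention. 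Carrying this through gives outward along $\bar\partial^\pm_ND$ and inward along $\bar\partial^\pm_0D$, namely (P.iii) and (P.iv).

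No step is deep; the only thing that needs genuine care — and where I expect any difficulty to lie — is the sign bookkeeping. On each face one must keep straight which of the neighbouring momenta ($p^{i+1}$, $P^{i-1}$, or $p^0$, $P^N$ at the ends) is pinned to $\pm1$ and which is free, and therefore which adjacent faces must be excluded to promote $\cI_{\xi_{i+1}}\ge 0\ge\cI_{\xi_i}$ to strict inequalities; this is exactly the purpose of the reduced faces $\bar\partial^\pm_iD$, so that the two partials of $\cI$ entering $\langle\nabla\cI,\nu\rangle$ are simultaneously of strictly opposite sign. Once that is arranged, the sign of $\langle\nabla\cI,\nu\rangle$ is forced by the single positivity fact that all the monotone-boundary functions $Q^\pm_i$ and $\hat Q^\pm_i$ are increasing, hence the coefficients appearing in the conormals are positive.
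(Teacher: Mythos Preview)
Your proposal is correct and follows essentially the same route as the paper's own proof: both compute $\cI_{\xi_i}=P^{i-1}-p^i$ and $\cI_q=P^N-p^0$, identify on each face $\partial^\pm_iD$ that boundary invariance pins $p^i=P^i$ to $\pm 1$ (with the appropriate sign flip at $i=0,N$), write the (in/out)ward normal using the derivative of the increasing boundary function $Q^\pm_i$ or $\hat Q^\pm_i$, and read off the sign of the pairing. Your identification of the sign bookkeeping and of the role of the reduced faces $\bar\partial^\pm_iD$ in upgrading the weak inequalities to strict ones is exactly what the paper does; you are in fact slightly more explicit than the paper in deriving the outward conormal from the defining inequality of each face.
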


\begin{proof} 
Evidently,
 ${\cI}_q(q,\, \xi)=P^N-p^0$ and ${\cI}_{\xi_i}(q,\, \xi)=P^{i-1}-p^i,$
 for $i=1,\dots,N$.
We wish to study the behavior of the function $\cI$ across the boundary of $D$. 
On $\partial^{\pm}_0D$, we have $p_0=P_0=\mp 1$. Since $\cI_{\xi_1}=P^{0}-p^1$, we deduce
\begin{equation}\label{eq10.1}
\pm\cI_{q}>0,\ \ \ \pm\cI_{\xi_1}<0 \ \ \ {\text{ on }}\ \ \bar\partial^\pm_0D.
\end{equation} 
On $\partial^{\pm}_ND$, we have $p_N=P_N=\mp 1$. Since $\cI_{\xi_N}=P^{N-1}-p^N$, we deduce 
\begin{equation}\label{eq10.2}
\pm\cI_{\xi_{N-1}}<0,\ \ \  \pm\cI_{\xi_N}>0 \ \ \ {\text{ on }}\ \ \bar\partial^\pm_ND.
\end{equation}
On $\partial^{\pm}_iD$ we have $P^i=p^i=\pm (-1)^{i+1}$; hence
$
 \pm (-1)^{i}{\cI}_{\xi_i}(q,\, \xi)=\pm (-1)^{i}(P^{i-1}-p^i)\geq 0$ and
$\pm (-1)^{i}{\cI}_{\xi_{i+1}}(q,\, \xi)=\pm (-1)^{i}(P^{i}-p^{i+1})\leq 0$ if $1<i<N$. 
The inequalities are strict on $\bar\partial^{\pm}_iD$.   

 The boundary $\partial^{\pm}_0D$ is the set of points $(q,\xi)$ such that $\xi_1=Q^\mp_0(q)$ with
$q\mapsto Q^\mp_0(q)$ increasing.
 So, if we write $\dot Q^\mp_0(q)$ for the derivative of $Q^\mp_0(q)$,
then any vector that has $(1,\,\dot Q^\mp_0(q))$ for its projection onto $(q,\xi_1)$-space would be tangent to $\partial^{\pm}_0D$. Hence a vector $n_0$ that has
 $\pm(\dot Q^\mp_0(q),\,-1)$ for the first two components and $0$ for the other components, is
an inward normal vector to the $\partial^{\pm}_0D$ part of boundary.
   As a result, we have that on $\partial^{\pm}_0D$
 $
  \langle  \nabla\mathcal{I},\, n_0  \rangle=
  \pm \left(\dot Q^\mp_0(q){\cI}_{q}-{\cI}_{\xi_{1}}\right)  > 0,
 $
by \eqref{eq10.1}. Here $\langle\cdot,\cdot\rangle$ denotes the dot product.
That is, on $\bar \partial^{\pm}_0D$, the gradient $\nabla {\cI}$ is  inward, proving (P.\ref{item:iv}).
Similarly we use \eqref{eq10.2} to establish  (P.\ref{item:iii}).

Assume that $i$ is odd. The boundary $\partial^{\pm}_iD$ is the set of points $(q,\xi)$ such that the components
$\xi_i$ and $\xi_{i+1}$ lie on the graph $\xi_{i+1}=Q_i^{\pm}(\xi_i)$.
Again, if we write $\dot Q_i^{\pm}$ for the derivative of $Q_i^{\pm}$,
then any vector that has  $(1,\, \dot Q_i^{\pm}(\xi_i))$ for its projection onto $(\xi_i,\xi_{i+1})$-space
 would be tangent to $\partial^{\pm}_iD$. As a result, the vector $n_i$ that has
$\pm( \dot Q_i^{\pm}(\xi_i),\,-1)$ for $(i,i+1)$ components and $0$ for the other components, is
an inward normal to the $\partial^{\pm}_iD$
portion of the boundary.
 Hence on $\dot Q_i^{\pm}$,
 $
  \langle  \nabla\mathcal{I},\, n_i  \rangle=
  \pm \left(\dot Q_i^\pm{\cI}_{\xi_i}-{\cI}_{\xi_{i+1}}\right)  < 0,
 $
proving (P.\ref{item:ii}).  (P.\ref{item:i}) is established similarly.
  \end{proof}

Define 
$
\partial_{\rm in}D:=\left\{x\in \partial D\, |\, \nabla\cI(x) {\text{ is inward}}\right\}$, and similarly define
$$\partial_{\rm out}D:=\left\{x\in \partial D\, |\, \nabla\cI(x) {\text{ is outward}}\right\}.$$ We write 
$\bD^k$ for the $k$-dimensional unit ball.

With the same proof as Gole \cite{Go2001}, Proposition \ref{above'} implies the following lemma.

\begin{lemma}\label{lem10.2}
Suppose that $N=2k+1$ with $k\ge 1$. Then the sets $\partial_{\rm out}D$ and
$\partial_{\rm in}D$ are homeomorphic to $\bR\times \bD^{k+1}\times \partial \bD^{k}$
and  $\bR\times \partial\bD^{k+1}\times  \bD^{k}$ respectively.
\end{lemma}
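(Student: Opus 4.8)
The plan is to prove this purely topologically, with $\omega$ fixed throughout and playing no role; the argument is the one Golé gives in the periodic (classical multi-twist) case, the $\bR$-factor in the statement being the $q$-coordinate. By Proposition~\ref{above'}, $\partial_{\rm in}D$ is the closure of the union of the faces $\bar\partial^\pm_iD$ with $i$ even and $\partial_{\rm out}D$ the closure of the union of the $\bar\partial^\pm_iD$ with $i$ odd, so the task reduces to identifying the homeomorphism type of these two families of boundary facets of the region $D=D(\omega)\subset\bR^{N+1}$.

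First I would exhibit an explicit homeomorphism $\Psi\colon D(\omega)\to\bR\times\bD^{k+1}\times\bD^{k}$, compatible with the projection onto the $q$-axis. It is built by ``straightening'' the nested-interval description of $D$ from Section~\ref{t1} one coordinate at a time: $\xi_1$ ranges, for fixed $q$, over the interval with endpoints the continuous increasing functions $Q_0^\pm(q)$, so rescaling it affinely onto $[0,1]$ is a fibre-preserving homeomorphism; having straightened $\xi_1,\dots,\xi_i$, the coordinate $\xi_{i+1}$ ranges over an interval whose endpoints are continuous increasing functions of $\xi_i$ — and, at the last node, also of $q$, through the extra constraint $\xi_N\in[\hat Q_N^+(q),\hat Q_N^-(q)]$ — and one rescales again. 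Since every $Q_i^\pm,\hat Q_i^\pm$ is continuous and strictly increasing (by the twist and monotonicity conditions, cf. Proposition~\ref{prop:lifts}), each rescaling is a homeomorphism, the $q$-fibre is a topological $N$-ball varying continuously in $q$, and the $q$-direction therefore contributes the free $\bR$-factor. Under $\Psi$ the face $\bar\partial^\pm_iD$ is carried to the locus where the $i$-th straightened slab is saturated, and — exactly as in \cite{Go2001} — the even-indexed slabs assemble into the $\bD^{k+1}$ factor and the odd-indexed slabs into the $\bD^{k}$ factor. Consequently $\partial_{\rm in}D$ maps onto $\bR\times\partial\bD^{k+1}\times\bD^{k}\cong\bR\times S^{k}\times\bD^{k}$ and $\partial_{\rm out}D$ onto $\bR\times\bD^{k+1}\times\partial\bD^{k}\cong\bR\times\bD^{k+1}\times S^{k-1}$, which is the assertion.

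The step I expect to be the main obstacle is precisely this last bookkeeping: verifying that the incidences among the faces $\bar\partial^\pm_iD$ match those among the product faces $\partial\bD^{k+1}\times\bD^{k}$ and $\bD^{k+1}\times\partial\bD^{k}$, so that the union of the even faces is homeomorphic to $S^{k}\times\bD^{k}$ and the union of the odd faces to $\bD^{k+1}\times S^{k-1}$ (rather than, say, to families of $N$-balls). This is the content of Golé's construction; the delicate point is the behaviour at the last node $\xi_N$, where the slab coming from $F_{N-1}$ (an even, hence ``inward'', face $\bar\partial^\pm_{N-1}D$) meets the slab coming from $\hat Q_N$ (an odd, ``outward'', face $\bar\partial^\pm_ND$): these two must be split correctly between the two ball factors, and this is what forces the asymmetry $k+1$ versus $k$ when $N=2k+1$ is odd. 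Everything else — continuity and monotonicity of the boundary functions, the fact that each $q$-fibre is a ball, and the patching over $q\in\bR$ — is routine given Section~\ref{t1} and Proposition~\ref{prop:lifts}, and is insensitive to $\omega$, so the proof is identical to Golé's modulo replacing the periodic $q$-variable by an $\bR$-variable.
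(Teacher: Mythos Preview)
Your proposal is correct and matches the paper's approach exactly: the paper does not give an independent proof of this lemma but simply states that it follows from Proposition~\ref{above'} ``with the same proof as Gol\'e \cite{Go2001}'', i.e., precisely the straightening-and-bookkeeping argument you outline, with the periodic $q$-circle replaced by the free $\bR$-factor. Your write-up in fact contains more detail than the paper itself supplies.
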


\subsection{\textcolor{black}{Fixed points}}
 Now we  prove a result which implies the $N\geq3$ case in Theorem~\ref{epbt0}.

\begin{theorem} \label{keyresult3}
Let $Z(\o)$ be the set of critical points
of $\cI$ and $\hat Z:=\{q\, |\, (q,\xi)\in Z(\o)\}$. 
Then:
\begin{enumerate}[{\rm (a)}]
\item \label{xxxa}
 $\sup\hat Z=+\i$
and
$\inf \hat Z=-\i$ with probability $1$;
\item \label{xxxb}
$\cI$ has infinitely many critical points in $D$ almost surely.
\end{enumerate}
\end{theorem}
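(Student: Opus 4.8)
The plan is to construct a fibered minimax critical value of $\cI$ whose dependence on $q$ is a continuous stationary process, and then run on that process the oscillation/ergodicity argument of Proposition~\ref{prop:key} and Theorem~\ref{keyresult}. First note that (b) is a consequence of (a): since $\cI'(q,\eta;\tau_a\o)=\cI'(q+a,\eta;\o)$, the projected critical set obeys $\hat Z(\tau_a\o)=\hat Z(\o)-a$, so the events $\{\sup\hat Z=+\infty\}$ and $\{\inf\hat Z=-\infty\}$ are $\tau$-invariant and thus have probability $0$ or $1$; and when $\sup\hat Z=+\infty$ the set $\hat Z\subset\bR$ is infinite, hence so is $Z(\o)$. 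Thus it suffices to prove (a) with positive probability.

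\emph{Step 1.} Fix $\o$. The domain of $\cI'$ is $\{(q,\eta)\ |\ \eta\in D'(\tau_q\o)\}$, and for each $\o'$ the fiber $D'(\o')$ is compact; by Lemma~\ref{lem10.2} its boundary splits as $\partial D'(\o')=\partial'_{\rm in}(\o')\cup\partial'_{\rm out}(\o')$ with $\partial'_{\rm in}(\o')\cong\partial\bD^{k+1}\times\bD^{k}$ and $\partial'_{\rm out}(\o')\cong\bD^{k+1}\times\partial\bD^{k}$, and by Proposition~\ref{above'} the fiberwise gradient $\nabla_\eta\bar\cI(\o',\cdot)$ points into $D'(\o')$ along $\partial'_{\rm in}$ and out of it along $\partial'_{\rm out}$. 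Since $H_{k+1}\big(D'(\o'),\partial'_{\rm in}(\o')\big)\neq 0$, set
\[
\bar m(\o'):=\inf_{\sigma}\ \max_{x\in\sigma}\ \bar\cI(\o',x),
\]
the infimum over $(k+1)$-chains $\sigma$ carrying a fixed nonzero class of that group. This value is finite, and the deformation lemma on the compact manifold-with-boundary $D'(\o')$ — the modified negative $\eta$-gradient flow stays inside $D'(\o')$ because $-\nabla_\eta\bar\cI$ is inward along $\partial'_{\rm out}$, and $\nabla_\eta\bar\cI\neq 0$ on $\partial D'(\o')$ — shows $\bar m(\o')$ is a critical value of $\bar\cI(\o',\cdot)$ attained at an interior critical point. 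The map $\o'\mapsto\bar m(\o')$ is continuous, so $m(q,\o):=\bar m(\tau_q\o)$ is a continuous stationary process.

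\emph{Step 2.} If the critical set of $\bar\cI(\o',\cdot)$ at level $\bar m(\o')$ is infinite with positive probability, then by ergodicity it is so almost surely, and the set of pairs $(q,q+\eta)$ with $\eta$ critical for $\bar\cI(\tau_q\o,\cdot)$ at level $m(q,\o)$ is a $q$-unbounded family of critical points of $\cI$, which gives (a). Otherwise pick a critical point $\eta^{\ast}(q,\o)=\bar\eta^{\ast}(\tau_q\o)$ at level $m(q,\o)$; away from the measure-zero set of $q$ where the critical set bifurcates, $\eta^{\ast}(\cdot,\o)$ is smooth and $\tfrac{d}{dq}m(q,\o)=\cI'_q(q,\eta^{\ast}(q,\o))$ since $\cI'_\eta(q,\eta^{\ast})=0$, while $\cI'_\eta=0$ forces $\cI'_q$ to coincide with $\cI_q$ on the lift; hence every local extremum $q_0$ of $q\mapsto m(q,\o)$ gives $\nabla\cI=0$ at $(q_0,\,q_0+\eta^{\ast}(q_0,\o))$. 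By Proposition~\ref{prop:key}, $m(\cdot,\o)$ is either almost surely constant or oscillates with local maxima whose $q$-coordinates tend to $+\infty$ and to $-\infty$; in the latter case we obtain critical points of $\cI$ with $q$-coordinates tending to $\pm\infty$, proving (a). If $m(\cdot,\o)$ is almost surely constant, then for $\o$ in the full-measure set $\Omega_0=\{\o\ |\ \{\tau_a\o\ |\ a>a_0\}\ \text{is dense for all}\ a_0\}$ the gradient-flow dichotomy of Theorem~\ref{keyresult} (Cases~1 and~2), with the topological obstruction of Lemma~\ref{lem10.2} in place of the one used there for $\bR\times\partial[-1,1]^2$, yields a continuum of critical points of $\cI$ spread over all of $\bR$ in $q$; again (a) holds. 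Thus (a), and hence (b), hold with probability one.

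\emph{Main obstacle.} The delicate point is Step 2: justifying that the minimax point $\eta^{\ast}(q,\o)$ may be differentiated in $q$ — controlling the bifurcations of the critical set at the minimax level — and passing from local extrema of the merely continuous process $m(\cdot,\o)$ to honest zeros of $\nabla\cI$. As in Theorem~\ref{keyresult} this is circumvented by observing that whenever the minimax point fails to be locally unique and smooth the level set $\{\bar\cI(\tau_q\o,\cdot)=m(q,\o)\}$ already carries additional critical points, and otherwise by the gradient-flow argument on $D(\o)$; the randomness itself, as noted before Proposition~\ref{above'}, contributes nothing beyond stationarity of $m$ and the ergodic zero--one law.
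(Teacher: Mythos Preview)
Your approach is genuinely different from the paper's, and the minimax idea is attractive, but Step~2 has a real gap that your ``Main obstacle'' paragraph does not close. The passage from a local extremum of $q\mapsto m(q,\o)$ to a critical point of $\cI$ rests on the identity $\tfrac{d}{dq}m(q,\o)=\cI'_q(q,\eta^{\ast}(q,\o))$, which in turn requires that the fiberwise minimax critical point $\eta^{\ast}(q,\o)$ exist uniquely and vary smoothly in $q$. You assert this holds ``away from the measure-zero set of $q$ where the critical set bifurcates,'' but no such Sard-type statement is available at the stated regularity: the critical set at level $m$ can be non-unique or degenerate on a set of positive measure, and even when it is finite the minimax value can jump between branches, so $m$ need not be differentiable at its extrema. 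Your fallback---that non-uniqueness produces ``additional critical points'' on the level set---only yields additional \emph{fiberwise} critical points (zeros of $\cI'_\eta$), not zeros of the full gradient $\nabla\cI$; the missing $\cI_q=0$ is exactly what you were trying to extract from $m'=0$. And your final appeal to ``the gradient-flow argument on $D(\o)$'' in the constant-$m$ case is essentially a deferral to the paper's method, which then makes the minimax construction redundant.

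The paper avoids all of this by running the gradient flow of $\cI$ on the full domain $D(\o)$ from the outset. If the $q$-component of some orbit is unbounded, a density argument as in Case~1 of Theorem~\ref{keyresult} gives a continuum of critical points. Otherwise every orbit has bounded $q$-component, and if there were no critical point the flow would furnish a homeomorphism $\partial_{\rm in}D\to\partial_{\rm out}D$, contradicting Lemma~\ref{lem10.2}. This gives $Z(\o)\neq\emptyset$ a.s. with no smoothness or selection issues. Unboundedness of $\hat Z$ then follows from a one-line stationarity computation: if $\bar q(\o):=\sup\hat Z(\o)<\infty$ on a set of positive measure, then $\bar q(\tau_a\o)+a=\bar q(\o)$ forces $\bP(\bar q\geq\ell)$ to be independent of $\ell$, which is impossible. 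Your opening reduction (that the events in (a) are $\tau$-invariant, hence $0$--$1$) is correct and is morally the same observation, but the paper's argument shows directly that the ``bounded'' alternative has probability zero rather than trying to manufacture critical points with large $|q|$ one at a time.
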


\begin{proof}
(\ref{xxxb}) follows from (\ref{xxxa}). Consider the ordinary differential equation
 \begin{eqnarray}
    \label{equ:(6)}
    \left\{
      \begin{aligned}
        q'(t)&\,=\,\cI_q(q(t), \, {\xi}(t);\o)\,=\,\bar {\cI}_\o(\tau_{q(t)}\o, \, {\xi}(t)),\\
         {\xi}'(t)&\,=\, {\cI}_{\xi}(q(t),\, {\xi}(t);\o)\,=\,\bar {\cI}_{\xi}(\tau_{q(t)}\o, \, {\xi}(t)).           
       \end{aligned} \right. \nonumber
  \end{eqnarray}
Now we distinguish two cases  (in analogy with the proof of Theorem \ref{keyresult}).

\paragraph{Case 1} (\emph{The map $q(t)$ is unbounded either as $t \to \infty$ or $t \to -\infty$}). 
Analogously to Case 1 in Theorem \ref{keyresult}, we are assuming that for a realization $\o\in\O_0$, either $(x(t)=(q(t),\xi(t)):t\ge 0)$ or $(x(t)=(q(t),\xi(t)):t\le 0)$ remains inside the domain 
$D(\o)$ and the $q$-component is unbounded. As in the proof of Case 1 in Theorem \ref{keyresult},
we can show that for all
$\omega \in \Omega$ there exists ${\xi}(\omega)$ such that $(\omega, \, {\xi}(\omega))$ is 
a critical point for $\bar {\cI}$. In particular ${\cI}$ has a continuum of critical points.

\paragraph{Case 2} (\emph{The map $q(t)$ is always bounded as $t \to \pm \infty$}). We want
to show that ${\cI}$ has critical points strictly inside of $D=D(\o)$. Let us first assume by contradiction that  ${\cI}$ has
no critical point inside of $D(\o)$ for a realization of $\o$. 
Consider the flow 
$
\phi^t(q,\,{\xi}):=({q(t)},\, {\xi}(t))=x(t),
$
which starts at the point $x=(q,{\xi} )\in \partial_{in}D$.
 Since $q(t)$ stays bounded and we are assuming that there is no critical
point inside, the flow must exit at some positive time $e(x)$. Write $\hat \phi(x)=\phi^{e(x)}(x)$.
 Note that the sets 
$\partial_{\rm in}D$ and $\partial_{\rm out}D$ are open relative to $\partial D$. 
We now argue that the function $\hat \phi(x)$ is continuous. For example, $\hat \phi$ is continuous at $x$ Simply because
 we may extend ${\cI}$ near $\hat\phi(x)$ across the boundary so that for some small $\e>0$,
 the flow $\phi^t(x)$ is well-defined and lies outside $D$ for $t\in (e(x),e(x)+\e)$. We can then guarantee that $\phi^t(y)$ is close to $\phi^t(x)$
for $t\in [0,e(x)+\e)$ and $y$ sufficiently close to $x$. As a result, for $y$ sufficiently close to $x$, the point
$\phi^{e(y)}(y)$ is close to $\phi^{e(x)}(x)$, concluding the continuity of $\hat\phi$. 
In fact by interchanging $\partial_{\rm out}D$ with $\partial_{\rm in}D$, we can show that $\hat \phi^{-1}$ is continuous. As a result $\hat\phi$ is a homeomorphism from $\partial_{\rm in}D$ onto $\partial_{\rm out}D$. This is impossible because $\partial_{\rm in}D$ is not homeomorphic to $\partial_{\rm out}D$by Lemma~\ref{lem10.2}.  Hence 
$\cI$ has at least one critical point in ${\rm Int}(D)$ and $Z(\o)\neq \emptyset$.

It remains to show that the set $Z(\o)$ is unbounded on both sides. We only verify the unboundedness from above as the 
boundedness from below can be established in the same way. 
Suppose to the contrary that $Z(\o)$ is bounded above with positive probability. Since 
\begin{equation}\label{eq9.78}
Z(\tau_q\o)=\tau_{-q}Z(\o)=\{(a-q,\xi)\,|\, (a,\xi)\in Z(\o)\},
\end{equation}
 by stationarity, we learn that the set $Z(\o)$ is bounded above almost surely.
Define $\bar x(\o)=(\bar q(\o),\bar {\xi}(\o))$ by
$
\bar q(\o)=\max\{q\, |\, (q,{\xi})\in Z(\o)\}$ and $\bar {\xi}(\o)=\max\{{\xi}\, |\, (\bar q(\o),{\xi})\in Z(\o)\}$. 
Again by \eqref{eq9.78},
$
\bar q(\tau_a\o)+a=\bar q(\o)$ and $\bar {\xi}(\tau_a\o)=\bar {\xi}(\o)$, 
for every $a\in \bR$. As a result,
$
\bP\big(\bar q(\o)\geq \ell\big)=\bP\big(\bar q(\tau_a\o)+a\geq \ell\big) 
=\bP\big(\bar q(\tau_a\o)\geq \ell-a\big)=\bP\big(\bar q(\o)\geq \ell-a\big),
$
for every $a$ and $\ell$. Since this is impossible unless $\bar q=\i$, we deduce that the set $Z(\o)$ is unbounded from above.
\end{proof}

\section{\textcolor{black}{Appendix A: Poincar\'e\--Birkhoff Theorem (1912)}} \label{sec:classical}

A diffeomorphism
$
F \colon \mathcal{S} \to \mathcal{S}$,
$$F(q,p)=(Q(q,p),P(q,p)),$$
 is an \emph{area\--preserving periodic twist} if:
 \begin{itemize}
\item[(1)] {\emph{area preservation}}: it preserves area; 
\item[(2)]
{\emph{boundary invariance}}: it preserves $\ell_{\pm}:=\mathbb{R}\times \{\pm 1\} $, i.e. 
 $P(q,\pm 1):=\pm 1$; 
 \item[(3)]
{\emph{periodicity}}: $F(q+1,p)=(1,0)+F(q,p)$ for all $p,q$;
\item[(4)] {\emph{boundary twisting}}:  
$F$ is orientation preserving and  
$\pm Q(q,\pm 1)>\pm q$ for all $q$. 
\end{itemize}
To emphasize the analogy with Section~\ref{mainsec},  
we may alternatively replace (3) by 

(3'): $F(q,\,p)=(q+\bar Q(q,\,p),\bar P(q,\,p))$ for a map
$
\bar F:=(\bar Q,\bar P) \colon \mathcal{S} \to \mathcal{S}
$
such that $\bar F(q+1,\,p)=\bar F(q,\,p)$ for all $(q,\,p)$, and (4) by 

(4'): $q\mapsto Q(q,\pm 1)$ is increasing  and  
$\pm Q(q,\pm 1)>\pm q$ for all $q$.  
 \begin{figure}[H]
  \begin{center}
    \includegraphics[width=4.5cm]{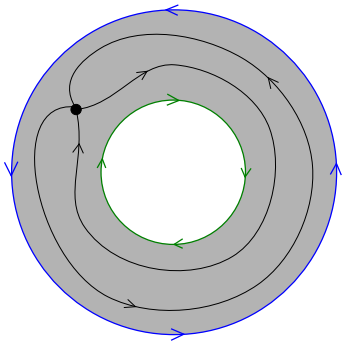}
   \caption{{\small Fixed point of an area\--preserving twist defined by a  flow.}}
    \label{annulus}
  \end{center}
\end{figure}

\begin{theorem}[Poincar\'e\--Birkhoff] \label{cpbt}
An area\--preserving periodic twist 
$
F \colon \mathcal{S} \to \mathcal{S}
$
has at least two geometrically distinct fixed points.
\end{theorem}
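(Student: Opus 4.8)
The plan is to obtain Theorem~\ref{cpbt} as the deterministic periodic instance of the machinery of Sections~\ref{ggfsec}--\ref{ergsec2}. Take the probability space of Example~\ref{expr}(i) with $k=1$: $\O=S^1=\bR/\bZ$, $\tau_a\o=\o+a\ (\mathrm{mod}\ 1)$, and $\bP$ the Lebesgue measure; this $\bR$-action is ergodic and positive on open sets, and for each fixed $\o$ a stationary lift $F(q,p;\o)$ is precisely a periodic twist in the sense of (3$'$), (4$'$). Thus all of the random generating-function calculus specializes, with every process now literally $1$-periodic in $q$.

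First I would write $F$ as a composition of monotone twists. A periodic twist is isotopic to the identity inside the periodic category (deform through periodic maps fixing $\ell_{\pm}$ and correct the Jacobian by Moser's trick exactly as in Step~1 of the proof of Theorem~\ref{dec}), so Theorem~\ref{dec} gives $F=F_N\circ\dots\circ F_1\circ F_0$ with each $F_j$ periodic, negative monotone for $j$ even and positive monotone for $j$ odd; after appending identity factors take $N$ odd. By Lemma~\ref{genf:def} and Proposition~\ref{easy:lem4} this yields a generating function $\cG(q,Q;\xi)=\cG^0(q,\xi_1)+\sum_{j=1}^{N-1}\cG^j(\xi_j,\xi_{j+1})+\cG^N(\xi_N,Q)$, and by Proposition~\ref{easy:lem3} the fixed points of $F$ in $\cS$ correspond bijectively to the critical points of $\cI(q,\xi)=\cG(q,q;\xi)$ on its domain $D\subset\bR^{N+1}$. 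Since each $\cG^j$ is the ``area between graphs'' of a periodic twist, $\cI$ is invariant under the diagonal unit translation $(q,\xi_1,\dots,\xi_N)\mapsto(q+1,\xi_1+1,\dots,\xi_N+1)$, so it descends to $\tilde\cI$ on $\tilde D:=D/\bZ$; geometrically distinct fixed points of $F$ on the annulus $S^1\times[-1,1]$ correspond to critical points of $\tilde\cI$ on $\tilde D$.

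Now I would feed in the boundary analysis already in the paper. Proposition~\ref{above'} (its proof uses only the twist conditions, not randomness) shows $\nabla\cI$ points strictly inward along $\partial_{\rm in}D$ and strictly outward along $\partial_{\rm out}D$, so $\tilde\cI$ has no critical points on $\partial\tilde D$; and by Lemma~\ref{lem10.2} the set $\tilde D$ is, up to homotopy, $S^1\times\bD^{k+1}\times\bD^{k}\simeq S^1$. One then runs a Lusternik--Schnirelmann / min-max argument over the cohomology of $S^1$ using the gradient flow of $\tilde\cI$ (modified near $\partial D$ to respect the inward/outward splitting, after the customary corner-rounding): since the cup-length of $S^1$ is $1$, $\tilde\cI$ has at least $\mathrm{cl}(S^1)+1=2$ critical points, equivalently the two min-max values attached to a point class and to $[S^1]$ are distinct critical levels. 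If both are nondegenerate we get two geometrically distinct fixed points; if some critical level is degenerate, $\tilde\cI$ has a whole critical submanifold there, hence infinitely many critical points, a fortiori at least two. (For $N=0,1$ one uses Proposition~\ref{prop:key} or Theorem~\ref{keyresult} and then passes from infinitely many fixed points in the strip to at least two on the annulus by quotienting by the $\bZ$-action; the cup-length estimate is the uniform way to pin down the number $2$.)

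The hard part is the min-max step on the manifold-with-corners $\tilde D$ with mixed gradient behaviour: one must check that the modified negative gradient flow is forward complete, that sublevel sets cannot leak through the faces where $\nabla\cI$ points the ``wrong'' way, and that the corners $\bar\partial^{\pm}_iD\cap\bar\partial^{\pm}_jD$ do not break the deformation lemma --- the classical Chaperon--Gol\'e--Conley--Zehnder bookkeeping. The second, subtler point is upgrading ``two critical points of $\tilde\cI$'' to ``two geometrically distinct fixed points of $F$'', i.e. arranging the degenerate cases so that a degenerate critical level is seen to contribute at least two distinct points on the annulus; both are routine given the correspondence of Proposition~\ref{easy:lem3} and the translation invariance of $\cI$, but that is where the care lies.
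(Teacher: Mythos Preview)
The paper does not contain a proof of Theorem~\ref{cpbt}. It is stated in Appendix~A purely as historical background; immediately after the statement the authors write that it ``was proved in certain cases by Poincar\'e~\cite{Po12}. Later Birkhoff gave a full proof\ldots'' and refer the reader to \cite{Bi1,Bi2,Bi3,Ba1997,BN}. Appendix~B explains that the periodic setting is an instance of the stochastic model, but the paper never claims to deduce the sharp count ``at least two geometrically distinct fixed points'' from Theorems~\ref{epbt}--\ref{epbt0}.

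Your proposal is therefore not a reconstruction of the paper's argument but an independent proof via Chaperon--Gol\'e generating functions plus Lusternik--Schnirelmann theory. That route is classical and essentially correct, but note two things. First, the ingredients you need for the count~$2$ --- the cup-length estimate on $\tilde D\simeq S^1\times\bD^{k+1}\times\bD^k$, the modified gradient flow respecting the inward/outward boundary splitting, and the deformation lemma on a manifold with corners --- are not provided anywhere in the paper; you are importing them wholesale from \cite{Go2001,Ch1984}. Second, the paper's own results do not suffice: Theorems~\ref{keyresult}, \ref{keyresult2}, \ref{keyresult3} give infinitely many critical points of $\cI$ in the strip $D$, but in the periodic case these could all lie in a single $\bZ$-orbit under the diagonal translation, projecting to a single fixed point on the annulus. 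Your min-max step is exactly what is needed to rule that out, and it is genuinely extra.

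So your outline is a valid proof sketch of the classical theorem, but it is not a comparison target here: there is no ``paper's own proof'' of Theorem~\ref{cpbt} to compare against.
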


Theorem~\ref{cpbt} was proved\footnote{One can use symplectic dynamics
to study area\--preserving maps, see~\cite{BrHo2012}.  Section 3.4
of Bramham et al. \cite{BrHo2012} proves Theorem~\ref{cpbt} using the important tool of 
finite energy foliations \cite{HWZ03}.}
 in certain 
cases by Poincar\'e \cite{Po12}. Later Birkhoff gave a full proof  
and presented  generalizations \cite{Bi1,Bi2}; in \cite{Bi3} he explored its applications to dynamics.  
See~\cite[Section 7.4]{Ba1997} and  \cite{BN} for 
an expository account. 

Arnol'd formulated
the higher dimensional analogue of Theorem~\ref{cpbt}: the
Arnol'd Conjecture \cite{Ar1978} (see also \cite{Au13}, \cite{Ho12} 
for discussions in a historical context).  The first breakthrough on the 
conjecture was by Conley and Zehnder \cite{CoZe1983}, who
proved it for the $2n$\--torus (a proof
using generating functions was later given
by Chaperon \cite{Ch1984}).  The second breakthrough
was by Floer \cite{Fl1988,Fl1989,Fl1989b,Fl1991}.  Related 
results were proven eg. by Hofer\--Salamon, Liu\--Tian, Ono, Weinstein \cite{Ho1985,HoSa1995, LiTi1998, On1995, We1983}.

\section{\textcolor{black}{Appendix B: Random Stationary versus Periodic}} \label{appendixB}

A universal choice for the $(\O,\cF,\tau)$ part of $\hat \O$ is as follows:
Let $\O$ to be set of ${\rm C}^1$ functions $\o=(\bar Q,\bar P):\cS\to\cS,$  equip $\O$ with the standard uniform metric, and choose $\cF$ to be the Borel $\sigma$-field associated with this metric. 
As for $\tau$, simply define $\tau_a\o(q,p)=\o(q+a,p)$. Given $\o\in\O$, we set
 \[
F(q,p;\o)=(q,0)+\o(q,p)=\big(q+\bar Q(q,p),\bar P(q,p)\big).
\]
The conditions (i)\--(iii) of Definition~\ref{def1} really mean that $\bP$ is concentrated on the space functions $\o$
such that $\bar P(q,\pm 1)=\pm 1$, $\pm \bar Q(q,\pm 1)>0,$ $1+\bar Q_q(q,\pm 1)>0$, and that
 the corresponding function $F$ is area preserving. By choosing different $\tau$-invariant and ergodic measures
$\bP$ on $\O$, we are selecting different notions of {\em{generic}} area-preserving twist maps. 

We now discuss some examples of $\bP$ in order to explain the scope of our main theorems. 

\medskip\noindent
{\bf{(i)}} ({\em{Periodic Setting}})
As the simplest example, take a $q$-periodic $\tilde\o$ and assume that $\bP$ is concentrated on the translates
of $\tilde\o$. That is, $\bP(\G(\tilde\o))=1$ where
\begin{equation}\label{eq1.4}
\G(\tilde\o)=\left\{\tau_a\tilde\o\ :\ a\in\bR\right\}.
\end{equation}
Since $\tilde\o$ is periodic of period $1$ in the $q$ variable, the set $\G(\tilde\o)$ is homeomorphic to the circle.
(Here were are thinking of circle as the interval $[0,1]$ with $0=1$.)
Since $\bP$ is $\tau$-invariant, $\bP$ can only be the push forward of the Lebesgue measure under the map
$a\mapsto \tau_a\tilde\o$. Now any almost sure statement for the fixed points of $F(\cdot,\cdot;\o)$
 in this case boils down to an analogous statement for the map $\tilde F(q,p)=(q,0)+\tilde\o(q,p)$. This is because if $(q_0,p_0)$ is a fixed point for 
$(q,0)+\tau_a\tilde\o(q,p)$, then $(q+a,p)$ is a fixed point for $\tilde F(q,p)$.
In other words, our stochastic model \emph{coincides} with the classical setting of Poincar\'e-Birkhoff in this case. 
\qed

\medskip\noindent
{\bf{(ii)}} ({\em{Quasi-periodic Setting}}) Pick a function 
$$\bar\o=(\bar Q,\bar P):\bT^k\times [-1,-1]\to\bR\times[-1,1],$$
and a vector $v$ that satisfies the condition of Example~\ref{expr} (i).
Let $\tilde \o(q,p)=\bar\o(qv,p)$ and define $\G(\tilde\o)$ as in \eqref{eq1.4}. Note that if $k>1$, the set 
$\G(\tilde\o)$ is not closed, and its topological closure $\G'(\tilde\o)$ consists of
functions of the form
\[
\o^b(q,p)=\bar\o(b+qv,p),
\]
with $b\in\bT^k$.
(Here we regard $\bT^k$ as $[0,1]^k$ with $0=1$, and $b+qv$ is a {\em{Mod}}\ $1$ 
summation.)
Assume that $\bP$ is concentrated on the set $\G'(\tilde\o)$.
Again, since $\bP$ is $\tau$-invariant, the pull-back of $\bP$ with respect to the transformation
$b\in\bT^k\mapsto \o^b$ can only be the uniform measure on $\bT^k$. Hence, an almost sure statement regarding the fixed points of  $F(\cdot,\cdot;\o)$ is equivalent to an analogous statement for the map
$F^b(q,p)=(q,0)+\o^b(q,p)$, for almost all $b\in\bT^k$. In other words, our main result does not guarantee
the existence of fixed points for a given quasi-periodic map $\tilde F(q,p)=
(q,0)+\tilde\o(q,p)$. Instead we show that 
 almost all $k$-dimensional translates of $\tilde F$, namely $(F^b:b\in\bT^k)$,  possess fixed points as we stated in Theorems~\ref{epbt}\--\ref{epbt0}.
\qed

\medskip\noindent
{\bf{(iii)}} ({\em{Almost Periodic Setting}}) 
Given a function $\tilde\o\in \O$, let us assume that the corresponding $\G(\tilde\o)$ is precompact.
We write $\G'(\tilde\o)$ for the topological closure of $\G(\tilde\o)$.
 By the classical theory of almost periodic functions,  the set $\G'(\tilde\o)$ can be turned to a compact topological group and for $\bP$, we may choose a normalized {\em{Haar}} measure on $\G'(\tilde\o)$.
Again, our main results only guarantee
the existence of fixed points for almost all translates of the almost periodic  map $\tilde F(q,p)$.
\qed

As our above examples indicate, we prove the existence of fixed points for almost all translates of
almost periodic area preserving twist maps provided that certain additional conditions 
(as described in Theorems~\ref{epbt}\--\ref{epbt0}) are satisfied. Indeed one of the main points of our work is that we can 
go beyond almost-periodic seeing. In the random stationary setting, we may start with a function $\tilde \o$
such that the corresponding $\G'(\tilde\o)$ is not compact and may not have a group structure. Instead we 
may insist on the existence of an ergodic translation invariant
 measure that is concentrated on $\G'(\tilde\o)$.
 Even the last requirement can be relaxed and our measure $\bP$ may not be concentrated on $\G'(\tilde\o)$
for some $\tilde\o\in\O$. The measure $\bP$ in some sense plays
 the role  of the normalized Haar measure in our third example above. 

We now describe two important examples of area preserving random twist 
maps that go beyond the almost-periodic setting.

\medskip\noindent
{\bf{(iv)}} ({\em{Monotone Twist Maps}}) 
As we learned in Subsection~\ref{mono}, we can construct arbitrary monotone twist maps from their generating functions.
On the account of Theorem~\ref{prop9.2}, let us  write $\O_0$ for the space of functions $\o':\bR\times[-1,1]\to\bR$
such that $o'(q,0)=0$, $\o'(q,p)>0$ for $p>0$, and 
\[
\eta(q;\o')=\inf\{a\ |\ \o'(q,a)=2\}<\i,
\]
for every $q$.  we then set $\s(q;\o')=\eta(q;\o')-\frac 12\int_0^{\eta(q;\o')} \o'(q,a){\rm d}a$ and
\begin{align*}
Q^-(q;\o')&=-\s(q;\o'),\ \ \ \ G(q,Q;\o')=\o'(q,Q-q-Q^-(q;\o'));  \\
\cG(q,Q;\o')&=\int_{q+Q^-(q;\o')}^{Q}\o'(q,a){\rm d}a-(Q-q).
\end{align*}
We write $\O'$ for the space of $\o'\in\O_0$ such that $G_q(q,Q;\o')<0$ for all $(q,Q)$.
By Theorem~\ref{prop9.2}, there exists a unique $\o(q,p;\o')$ such that if $F(q,p;\o')=(q,0)+\o(q,p;\o')$
and $\o'\in\O'$, then  
$
F(q,-\mathcal{G}_q(q,Q;\o'))=(Q,\mathcal{G}_Q(q,Q;\o')).
$
We define $\tau_a\o'(q,p)=\o'(q+a,p)$ and start with an arbitrary $\tau$-invariant ergodice
probability measure $\bP'$ such that $\bP'(\O')=1$.  The push forward of $\bP'$ under the 
transformation $\cF(\o')= \o(\cdot,\cdot;\o')$ yields a probability measure $\bP$ that is concentrated on those
$\o\in\O$ such that the corresponding $F$ is a monotone twit map.  We note that the map $\cF$ is continuous
and $\cF(\tau_a\o')=\tau_a\cF(\o')$. From this we learn that if $\o'$ is almost periodic, then $\cF(\o')$ is also
almost periodic. Though we can readily construct examples of $\bP'$ that is not concentrated on the space of almost periodic functions.

\medskip\noindent
{\bf{(v)}} ({\em{Hamiltonian Systems}})
Let $\O_1$ denote the set ${\rm C}^2$ (Hamiltonian) functions $H(q,p,t)$ such that $\pm H_p(q,\pm 1,t)>0$
and $H_q(q,\pm 1,t)=0$. Given $H\in\O_1$, set $\tau_aH(q,p,t)=H(q+a,p,t)$ and write $\phi_t^H(q,p)$ for its flow. We then define $F^t(q,p;H)=\phi_t^H(q,p)$ and $\o^t(q,p;H)=F^t(q,p;H)-(q,0)$. 
We can readily show that 
$F^t$ is a twist map and that $\o^t(q,p;\tau_aH)=\o^t(q+a,p;H)=\tau_a\o^t(q,p;H)$.
 Finally any $\tau$-invariant ergodic probability measure $\bQ$ on $\O_1$ yields a probability measure
$\bP^t$ on $\O$ and the twist maps $F^t$ are isotopic to identity. As a concrete example, take any $\bar H\in\O_1$ of compact support and given a discrete set $\a=\{q_i\ |\ i\in \bZ\}$,
define
\[
H(q,p;\a)=\sum_i\bar H(q-q_i,p).
\]
If we select $\a$ randomly according to a stationary point process (such as Poisson point process as was
described in Example~\ref{expr}), then the law of $H(q,p;\a)$ is an example of a $\tau$-invariant ergodic measure on the space $\O_1.$
\qed 

\bigskip
\bigskip
\medskip
\medskip
\medskip

{
\paragraph{\emph{Acknowledgements}.}   We thank Atilla Yilmaz for discussions, 
and Barney Bramham and Kris Wysocki for
comments on a preliminary version.  

\medskip

 AP is supported in part by NSF CAREER DMS-1055897,
NSF Grant DMS-0635607, a J. Tinsely Oden Faculty Fellowship from the University of
Texas, and an Oberwolfach Leibniz Fellowship. He
is grateful to the Institute for Advanced Study for providing an excellent research environment for
his research (December 2010\,\---\,\,September 2013),  and to Helmut Hofer for fruitful discussions on many topics.  During the
last stages of in the preparation of this paper, he was hosted by Luis Caffarelli,
Alessio Figalli, and J. Tinsley Oden at the Institute for Computational Sciences and Engineering in Austin, 
and he is  thankful for the hospitality. FR is supported in part by NSF Grant DMS-1106526. 

\medskip

Parts of this paper were written during visits of AP to UC Berkeley and  FR to the
Institute for Advanced Study and Washington University 
between 2010 and 2013. Finally, thanks to Tejas Kalelkar for help drawing some of
the figures.}

\bigskip
\bigskip

{ \paragraph{{\bf \emph{Dedication}}.}
The authors dedicate this article to Alan Weinstein, whose deep insights in so many areas of geometry are a continuous source of inspiration.}

\medskip
\bigskip

{\small
\noindent
\\
{{\'A}lvaro Pelayo}\\
University of California, San Diego\\ 
Department of Mathematics \\
9500 Gilman Dr \#0112\\
La Jolla, CA, USA.\\
{\em E\--mail}: \texttt{alpelayo@math.ucsd.edu} \\

\medskip
\medskip

\noindent
{Fraydoun Rezakhanlou}\\
University of California, Berkeley\\
Department of Mathematics\\
 Berkeley, CA, 94720-3840, USA. \\
{\em E\--mail}: \texttt{rezakhan@math.berkeley.edu}\\

 \end{document}